\newcommand{\gpp}{\mathfrak{g}_P}
\newcommand{\gph}{\mathfrak{g}_{P_h}}
\newcommand{\MGC}{\mathcal{G}_{\mathbb{C}}}
\newcommand{\lan}{\langle }
\newcommand{\ran}{\rangle}
\newcommand{\ML}{\mathcal{L}}
\newcommand{\da}{\dagger}
\newcommand{\MM}{\mathcal{M}}
\newcommand{\MO}{\mathcal{O}}
\newcommand{\Ker}{\mathrm{Ker}}
\newcommand{\Vol}{\mathrm{Vol}}
\newcommand{\End}{\mathrm{End}}
\newtheorem{theorem}{Theorem}[section]
\newtheorem{conjecture}[theorem]{Conjecture}
\newtheorem{corollary}[theorem]{Corollary}
\newtheorem{definition}[theorem]{Definition}
\newtheorem{question}[theorem]{Question}
\newtheorem{example}[theorem]{Example}
\newtheorem{lemma}[theorem]{Lemma}
\newtheorem{proposition}[theorem]{Proposition}
\newtheorem*{remark}{Remark}
\renewcommand{\Im}{\mathrm{Im}}
\newcommand{\MC}{\mathcal{C}}
\newcommand{\Tr}{\mathrm{Tr}}
\newcommand{\st}{\star}
\newcommand{\we}{\wedge}
\newcommand{\pa}{\partial}
\newcommand{\Ric}{\mathrm{Ric}}
\newcommand{\vp}{\varphi}
\newcommand{\MA}{\mathcal{A}}
\newcommand{\ME}{\mathcal{E}}
\newcommand{\na}{\nabla}
\newcommand{\ep}{\epsilon}
\newcommand{\hA}{\widehat{A}}
\newcommand{\hP}{\widehat{\phi}}
\newcommand{\bpa}{\bar{\pa}}
\newcommand{\Hom}{\mathrm{Hom}}
\newcommand{\be}{\beta}
\newcommand{\mfg}{\mathfrak{g}}
\newcommand{\lam}{\lambda}
\newcommand{\mfe}{\mathfrak{e}}
\newcommand{\diag}{\mathrm{diag}}
\newcommand{\MG}{\mathcal{G}}
\newcommand{\su}{\mathfrak{su}}
\newcommand{\MF}{\mathcal{F}}
\newcommand{\Lam}{\Lambda}
\newcommand{\MMH}{\MM_{\mathrm{Higgs}}}
\newcommand{\MMHS}{\MM_{\mathrm{HS}}}
\newcommand{\hphi}{\hat{\phi}}
\newcommand{\Zan}{Z_{\mathrm{T}}}
\newcommand{\loc}{\mathrm{loc}}
\newcommand{\MB}{\mathcal{B}}
\newcommand{\MI}{\mathcal{I}}
\newcommand{\ZT}{\mathbb{Z}/2}
\newcommand{\app}{\mathrm{app}}
\newcommand{\Sym}{\mathrm{Sym}}
\newcommand{\MRE}{\mathrm{E}}
\newcommand{\rank}{\mathrm{rank}}
\newcommand{\Id}{\mathrm{Id}}
\newcommand{\Uh}{\mathrm{Uh}}
\newcommand{\Ta}{\mathrm{T}}
\newcommand{\ch}{\mathrm{ch}}
\newcommand{\Zta}{Z_{\mathrm{T}}}
\newcommand{\Aut}{\mathrm{Aut}}
\newcommand{\hvp}{\hat{\vp}}
\newcommand{\dist}{\mathrm{dist}}
\newcommand{\vol}{\mathrm{vol}}
\newcommand{\MMP}{\MM_{\mathrm{PS}}}
\newcommand{\hbe}{\hat{\be}}
\newcommand{\Op}{\mathrm{Op}}
\newcommand{\mbe}{\mathbf{e}}
\newcommand{\Jac}{\mathrm{Jac}}
\newcommand{\mbs}{\mathbf{s}}
\newcommand{\bv}{\mathbf{v}}
\begin{document}
	
	\title[The behavior of sequences of solutions to the Hitchin-Simpson equations]{The behavior of sequences of solutions to the Hitchin-Simpson equations}
	
	\author[He]{Siqi He}
	\email{sqhe@amss.ac.cn}
	\address{Morningside Center of Mathematics,
		Chinese Academy of Sciences, 
		Beijing, 100190 China}
	
	\maketitle
\begin{abstract}
	The Hitchin-Simpson equations are first-order non-linear equations for a pair consisting of a connection and a Higgs field. In this paper, we study the behavior of sequences of solutions to the Hitchin-Simpson equations on closed K\"ahler manifolds with unbounded $L^2$ norms of the Higgs fields. We prove a compactness result for the connections and renormalized Higgs fields, which generalizes the work of Taubes \cite{taubes2013compactness} and Mochizuki \cite{Mochizukiasymptotic}.
	
	As applications, we prove that every $\mathbb{Z}/2$ harmonic 1-form on a K\"ahler manifold can be deformed into a sequence of solutions to the Hitchin-Simpson equations. Additionally, we solve the generalized Hitchin's WKB problem on any K\"ahler manifold.
\end{abstract}

	\begin{section}{Introduction}
The study of the compactness problem for flat $\mathrm{SL}_2(\mathbb{C})$ connections on 3- and 4-manifolds was pioneered by Taubes \cite{Taubes20133manifoldcompactness,taubes2013compactness}. In four dimensions, Taubes also proved a compactness theorem for $\mathrm{SL}_2(\mathbb{C})$ connections in both the Kapustin-Witten equations \cite{taubes2013compactness} and the Vafa-Witten equations \cite{taubes2017behavior}. We also refer to \cite{haydyswalpuski2015compactness,walpuskizhang2019compactness} for related compactness results. Additionally, on Riemann surfaces, flat $\mathrm{SL}_2(\mathbb{C})$ connections are closely related to the theory of Higgs bundles and Hitchin fibrations. Through the theory of Higgs bundles and tools from complex geometry, a series of papers have provided significant insights into the ends of Taubes' compactified space, cf. \cite{mazzeo2012limiting,mazzeo2016ends,fredrickson2018generic,Mochizukiasymptotic,hecompactification2023}.

This paper attempts to generalize Taubes' compactness results to higher-dimensional K\"ahler manifolds and to $\mathrm{GL}_r(\mathbb{C})$ connections that satisfy the Hitchin-Simpson equations. The Hitchin-Simpson equations were introduced by Hitchin \cite{hitchin1987self} on Riemann surfaces and studied by Simpson \cite{Simpson1988Construction,Simpson1992} on general K\"ahler manifolds. These equations interact with various areas of mathematics, including gauge theory, the theory of Higgs bundles, and representations of the fundamental group. Our approach combines key contributions from Taubes \cite{Taubes20133manifoldcompactness,taubes2013compactness} on $\mathrm{SL}_2(\mathbb{C})$ connections and Mochizuki \cite{Mochizukiasymptotic} on the asymptotic behavior of Higgs bundles on Riemann surfaces. Furthermore, there have been recent developments regarding the compactness of the Vafa-Witten equations by Chen \cite{chen2024vafawitten} and Taubes \cite{taubes2024vafawitten}.

Let $(X, \omega)$ be a closed K\"ahler manifold of real dimension $2n$ with K\"ahler class $\omega$, and let $E$ be a rank $r$ complex vector bundle with Hermitian metric $H$. Let $A$ be a unitary connection, and $\phi$ be a Hermitian 1-form valued in $\End(E)$. We write $d_A = \bar{\pa}_A + \pa_A$ and $\phi = \vp + \vp^{\da}$, where $\vp \in \Omega_X^{1,0}(\End(E))$. The Hitchin-Simpson equations for $A$ and $\phi$ can then be written as
\begin{equation*}
	\begin{split}
		F_A^{2,0} = 0,\;\bar{\pa}_A \vp = 0,\; \vp \wedge \vp = 0,\\
		\sqrt{-1} \Lambda (F_A^{1,1} + [\vp, \vp^{\da}]) = \gamma \Id_E,
	\end{split}
\end{equation*}
where $\Lambda: \Omega^{1,1}(\End(E)) \to \Omega^0(\End(E))$ is the contraction with the K\"ahler class, and $\gamma$ is a constant defined as $\gamma = \frac{2\pi \deg(E)}{(n-1)!r}$. Additionally, we can define $D = d_A + \phi$. When $\deg(E) = 0$ and $\ch_2(E) \cdot [\omega]^{n-2} = 0$, the Hitchin-Simpson equations are equivalent to the flat connection equation $D^2 = 0$.

Recall that a Higgs bundle on a K\"ahler manifold consist of $(E,\bar{\pa}_E, \vp)$, where $\bar{\pa}_E^2=0$, which defines a holomorphic structure on $E$ and $\vp \in H^0(\Omega_X^{1,0}(\End(E)))$ satisfies the integrability condition $\vp \wedge \vp = 0$.

Compared to the study of compactness problems on 3- and 4-manifolds by Taubes \cite{Taubes20133manifoldcompactness}, studying the compactness problem on K\"ahler manifolds offers two significant advantages. First, we have the Kobayashi-Hitchin correspondence \cite{donaldson1985anti,hitchin1987self,Simpson1988Construction}, which establishes a one-to-one correspondence between the moduli space of polystable Higgs bundles and the moduli space of solutions to the Hitchin-Simpson equations.

The second advantage is the Hitchin morphism, introduced by Hitchin \cite{hitchin1987stable} and further studied in \cite{simpson1994moduli,chenngo2020hitchin}, which plays an important role in our work. Let $\MMH$ be the moduli space of polystable Higgs bundles, and let $p_1, \cdots, p_r$ be a basis of invariant polynomials of $\End(E)$. For any $[(\bar{\pa}_E, \vp)] \in \MMH$, we define the Hitchin base $\MA_X := \oplus_{i=1}^r H^0(\Sym^i \Omega_X^{1,0})$, and the Hitchin morphism is given by
$$
\kappa: \MMH \to \MA_X, \quad \kappa(\vp) := (p_1(\vp), \cdots, p_r(\vp)).
$$
The image of the Hitchin base, called the spectral data, reflects the eigenvalues of the Higgs field. The discriminant locus is a complex analytic subvariety that corresponds roughly to points in $X$ where some eigenvalues of $\vp$ have higher multiplicities. Outside the discriminant locus, the Higgs bundle decomposes the vector bundle problem into a line bundle problem, which greatly simplifies the analysis.
		
\subsection{Main Result}
Our main result is a compactness theorem for the Hitchin-Simpson equations, which can be stated as follows:

\begin{theorem}
	\label{Thm_maintheorem}
	Let $X$ be a closed K\"ahler manifold, and let $E$ be a rank $r$ vector bundle with a Hermitian metric $H$. Let $(A_i, \phi_i)$ be a sequence of solutions to the Hitchin-Simpson equations \eqref{Hitchin-Simpson}, and let $r_i := \|\phi_i\|_{L^2(X)}$.
	
	\begin{itemize}
		\item [(i)] If the sequence $\{r_i\}$ has a bounded subsequence, then there exist a Hausdorff codimension-4 singular set $Z_{\Uh}$, a rank $r$ Hermitian vector bundle $(E_{\infty}, H_{\infty})$, a unitary connection $A_{\infty}$, and a Hermitian 1-form $\phi_{\infty}$, such that $(A_{\infty}, \phi_{\infty})$ satisfies the Hitchin-Simpson equations on $X \setminus Z_{\Uh}$. Moreover, passing to a subsequence and up to unitary gauge, $(A_i, \phi_i)$ converges to $(A_{\infty}, \phi_{\infty})$ in the $\mathcal{C}^{\infty}_{\text{loc}}$ sense over any compact subset of $X \setminus Z_{\Uh}$.
		
		\item [(ii)] If the sequence $\{r_i\}$ has no bounded subsequence, define $\hat{\phi}_i := r_i^{-1} \phi_i = \hat{\varphi}_i + \hat{\varphi}_i^{\dagger}$ and let $\mbs_i := \kappa(\hat{\varphi}_i) \in \MA_X$ be the image under the Hitchin morphism. Then $\mbs_i$ converges smoothly on $X$ to $\mbs_{\infty}$ in $\MA_X$.
		
		Let $Z_{\Ta} \subset X$ be the discriminant locus defined by $\mbs_{\infty}$. Suppose $Z_{\Ta} \subsetneq X$ (which is automatically satisfied when $\rank\; E = 2$), then:
		\begin{itemize}
			\item [(a)] There exists a Hausdorff codimension-4 subset $Z_{\Uh} \subset X \setminus Z_{\Ta}$, a unitary connection $A_{\infty}$, and a Hermitian 1-form $\phi_{\infty}$ such that, on $X \setminus (Z_{\Uh} \cup Z_{\Ta})$, up to unitary gauge and passing to a subsequence, $(A_i, \hat{\phi}_i)$ converges to $(A_{\infty}, \phi_{\infty})$ in the $\mathcal{C}^{\infty}_{\text{loc}}$ topology.
			\item [(b)] $A_{\infty}$ is a Hermitian-Yang-Mills connection, and $(A_{\infty}, \phi_{\infty})$ satisfies the adiabatic Hitchin-Simpson equations \eqref{adabatic-Hitchin-Simpson}.
			\item [(c)] If $\deg(E) = 0$ and $\ch_2(E) \cdot [\omega]^{n-2} = 0$, then $Z_{\Uh} = \emptyset$, and for each $U \subset X \setminus Z_T$, there exists a constant $d_U$ such that, passing to a subsequence, for each $k$, there exist constants $C, C'$, depending on $k$ and $U$, such that over $U$, we have exponential convergence
			\begin{equation}
				\|(A_i, \hat{\phi}_i) - (A_{\infty}, \phi_{\infty})\|_{\mathcal{C}^k} \leq C e^{-C' r_i d_U}.
			\end{equation}
		\end{itemize}
	\end{itemize}
\end{theorem}

The discriminant locus roughly consists of points in $X$ corresponding to the common eigenvalues of the Higgs fields, which will be explicitly defined in Chapter \ref{sec_Hitchin_morphism_the_spectral_variety}. When $\rank\;E \geq 3$ and $\Tr(\phi_i) \neq 0$, it is possible that $Z_{\Ta} = X$, and in this situation, Theorem \ref{Thm_maintheorem} does not apply. Furthermore, part (i) of the above theorem is well-known and follows from classical elliptic theory and Uhlenbeck compactness. The $\mathcal{C}^{\infty}$ convergence for similar equations on 3- and 4-manifolds has also been obtained by Parker \cite{parker2023concentrating}.
		
\subsection{Applications}
We introduce two applications of the compactness theorem proved above: the deformation problem of $\ZT$ harmonic 1-forms and the generalized Hitchin's WKB problem.

\subsubsection{Deformation problem of $\ZT$ harmonic 1-forms}
In \cite{Taubes20133manifoldcompactness,taubes2013compactness}, Taubes introduced the concept of a $\ZT$ harmonic 1-form, which is roughly the boundary of the compactification of a flat $\mathrm{SL}_2(\mathbb{C})$ connection. These $\ZT$ harmonic 1-forms are closely related to various topics in differential geometry, gauge theory, 3-manifold topology, hyperK\"ahler geometry, and special Lagrangian submanifolds \cite{doan2017existence,donaldson21deformation,takahashi2015moduli,zhang2017rectifiability,he2024Z2harmonic,he23branched}.

A $\ZT$ harmonic 1-form is a two-valued 1-form $\bv = \pm v$ defined on $X \setminus Z$, where $Z$ is a codimension-2 subset. The form is harmonic on $X \setminus Z$ and satisfies $\int_{X \setminus Z} |\nabla \bv|^2 < \infty$. On a K\"ahler manifold, $\ZT$ harmonic forms are relatively easier to understand. Despite the $\pm$ ambiguity, $s_{\bv} := v^{1,0} \otimes v^{1,0}$ is a well-defined holomorphic section of $H^0(\Sym^2 \Omega_X^{1,0})$. Furthermore, under this identification, all $\ZT$ harmonic 1-forms can be identified with rank-one symmetric differentials, as introduced by Bogomolov-Oliveria \cite{bogomolov2011symmetric}, and are closely related to the Hitchin morphism and Chen-Ngo's description \cite{chenngo2020hitchin}. We refer to Section \ref{sec_Hitchin_morphism_the_spectral_variety} for more detailed explanations. When $\rank\;E = 2$ and $\Tr(\phi_i) = 0$, the adiabatic Hitchin-Simpson equations yield a $\ZT$ harmonic 1-form.

A $\ZT$ harmonic 1-form $\bv$ is said to be deformable if there exists a sequence $(A_i, \phi_i)$ with unbounded $L^2$ norms, i.e., $\lim_{i \to \infty} \|\phi_i\|_{L^2(X)} = \infty$, such that over $X \setminus Z$, for some Hausdorff codimension-2 singular set $Z$, the limit $\lim_{i \to \infty} (A_i, \phi_i)$ exists and $\lim_{i \to \infty} \frac{1}{\|\phi_i\|_{L^2(X)}^2}\Tr((\phi_i^{1,0})^2) = s_{\bv}$.

The deformation problem is a fundamental question in the analytic study of gauge-theoretic equations. We are particularly interested in the following deformation question, which has been posed for various compactifications of gauge-theoretic equations and has been solved for Hitchin equations on Riemann surfaces \cite{mazzeo2012limiting} and two-spinor Seiberg-Witten equations \cite{doan2020deformation,parker2024gluing,parker2023deformations}. On closed 3-manifolds, it has been shown in \cite{he2024Z2harmonic} that not every $\ZT$ harmonic 1-form is deformable.

\begin{question}(Deformation Problem.)
	Which $\ZT$ harmonic 1-forms on $X$ can be deformed into a sequence of solutions to the Hitchin-Simpson equations?
\end{question}

On a closed K\"ahler manifold, using the Hitchin section construction for rank-two Higgs bundles \cite{heliu2023spectralvariety}, we prove the following:

\begin{theorem}
	\label{thm_deformation_ZT_harmonic_form1}
	Every $\ZT$ harmonic 1-form on a K\"ahler manifold can be deformed into a sequence of solutions to the Hitchin-Simpson equations that converge to $\bv$.
\end{theorem}

We are particularly interested in the deformation problem on 4-manifolds. Taubes proved a compactness result for the Kapustin-Witten equations \cite{taubes2013compactness}, while Tanaka \cite{tanaka2019singular} showed that on a K\"ahler surface, the Kapustin-Witten equations are equivalent to the Hitchin-Simpson equations. In \cite{bogomolov2011symmetric}, Bogomolov-Oliveira constructed a $\ZT$ harmonic 1-form on a simply connected closed K\"ahler surface $X_{\mathrm{BO}}$. Applying Theorem \ref{thm_deformation_ZT_harmonic_form1} to the $\ZT$ harmonic 1-form on $X_{\mathrm{BO}}$ yields the following interesting corollary:

\begin{corollary}
	On the 4-manifold $X_{\mathrm{BO}}$, there exists a $\ZT$ harmonic 1-form that is the limit of a sequence of solutions to the Kapustin-Witten equations but not the limit of a sequence of solutions to flat $\mathrm{SL}_2(\mathbb{C})$ connections.
\end{corollary}

\subsubsection{Generalized Hitchin's WKB problem}
When $X$ is a Riemann surface, the Hitchin's WKB problem was first introduced by Katzarkov-Noll-Pandit-Simpson \cite{katzarkov2015harmonic} and was solved by Mochizuki \cite{Mochizukiasymptotic} for Riemann surfaces. We also refer \cite{segman2024localasymptotics,sagman2022unstable,biswas2021holomorphic} for generalization and applications for the WKB problem on Riemann surface.

Let $(E, \bar{\pa}_E, t\vp)$ be a stable Higgs bundle with $\deg(E) = 0$ and $\ch_2(E) \cdot [\omega]^{n-2} = 0$, where $t$ is a real parameter. Let $(A_t, \phi_t)$ be the corresponding flat solutions to the Hitchin-Simpson equations under the Kobayashi-Hitchin correspondence. We define $D_t = d_{A_t} + \phi_t$, so that $D_t$ is a family of flat connections. Moreover, we assume that the discriminant locus $Z$ of the Higgs bundle $(E, \bar{\pa}_E, \vp)$ is a proper subset, i.e., $Z \subsetneq X$.

For the family of connections $D_t$, the Hitchin's WKB problem seeks to understand the asymptotic behavior of the monodromy defined by $D_t$. More specifically, let $\gamma: [0,1] \to X \setminus Z$ be a smooth embedded path, and let $P_t$ denote the parallel transport between $\gamma(0)$ and $\gamma(1)$. Let $H_{0,t} (H_{1,t})$ be the harmonic metric at $\gamma(0)$ $(\gamma(1))$, and let $P_t^{\st} H_{1,t}$ denote the pullback metric under the parallel transport. 

For the Hermitian metrics $H_{0,t}$ and $P_t^{\st} H_{1,t}$, we can take a basis $e_1, \cdots, e_r$, orthogonal with respect to both metrics. Then we define numbers $\mu_i := \log |e_i|_{P_t^{\st} H_{1,t}} - \log |e_i|_{H_{0,t}}$. Assume that $\mu_1 \geq \cdots \geq \mu_r$. The distance vector is defined as
$$
\vec{d}(H_0, P_t^{\st} H_1) := (\mu_1, \cdots, \mu_r).
$$

We aim to understand the following question. Since the original question for the Riemann surface is called Hitchin's WKB problem \cite[P. 11]{katzarkov2015harmonic}, we refer to the following asymptotic problem as the generalized Hitchin's WKB problem.

\begin{question}(Generalized Hitchin's WKB problem)
	\label{question_WKB}
	What is the asymptotic behavior of $P_t$ as $t \to \infty$? More precisely, what is the asymptotic behavior of the distance vector $\vec{d}(H_0, P_t^{\st} H_1)$?
\end{question}

Let $U_{\gamma} \subset X \setminus Z$ be a neighborhood of $\gamma$. Then there exists a decomposition of the Higgs bundle $(E, \bar{\pa}_E, \vp) = \oplus_{i=1}^r (E_i, \bpa_i, \vp_i)$, where $\rank\;E_i = 1$ and $\vp_i$ are holomorphic 1-forms. Moreover, for $i \neq j$, $\vp_i - \vp_j$ has no zeros. Let $s$ be the coordinate on $[0,1]$, and for the path $\gamma: [0,1]_s \to X \setminus Z$, we write $\gamma^* \vp_i = a_i ds$. The path $\gamma$ is called non-critical if $\Re(a_i(s)) \neq \Re(a_j(s))$. Assume $\Re(a_1(s)) < \cdots < \Re(a_r(s))$ for $s \in [0,1]$. We define $\alpha_i := -\int_0^1 \Re(a_i(s)) ds$. Then, we have the following asymptotic result, which solves the generalized Hitchin's WKB problem:

\begin{theorem}
	Suppose $\gamma$ is non-critical. Then there exist positive constants $C, C', t_0$, depending on $(\bar{\pa}_E, \vp)$, such that for any non-critical path $\gamma$, we have
	\begin{equation}
		\left| \frac{1}{t} \vec{d}(H_0, P_t^{\st} H_1) - (2 \alpha_1, \cdots, 2 \alpha_r) \right| \leq C e^{-C' t},
	\end{equation}
	for $t \geq t_0$.
\end{theorem}

\textbf{Conventions.} In all estimates below, $C$, $C'$, etc., are constants that depend only on $X$, $E$, and the background Hermitian metric $H$ on $E$, as well as the Sobolev constants. However, their values may change from one line to the next. The notation $L^p_k$ refers to $L^p$ functions with weak derivatives up to order $k$ have a finite $L^p$ norm. Whenever a constant depends on additional data, we will specify this explicitly. Throughout this paper, $X$ always denotes a closed K\"ahler manifold with $\Vol(X) = 1$ and $\dim_{\mathbb{R}}(X) = 2n$. We typically denote $R_0$ as the injectivity radius of $X$.

\textbf{Acknowledgements.} The author wishes to thank Mark Cataldo, Tsao-Hsien Chen, Xuemiao Chen, Alexander Doan, Simon Donaldson, Andriy Haydys, Rafe Mazzeo, Takuro Mochizuki, Nathaniel Sagman, Clifford Taubes, Richard, Wentworth, Song Sun and Boyu Zhang for numerous helpful discussions. Part of this work was completed while the author was visiting Stanford University and IGP, and the author is grateful to the Stanford math department and IGP for their hospitality. S.H. is partially supported by NSFC grant No. 12288201 and No. 2023YFA1010500.

	\end{section}

\begin{section}{Higgs Bundles and the Hitchin-Simpson Equations on K\"ahler Manifolds}
	In this section, we introduce the Kobayashi-Hitchin correspondence for Higgs bundles and the Hitchin-Simpson equations on closed K\"ahler manifolds, following Hitchin \cite{hitchin1987self} and Simpson \cite{Simpson1988Construction}.
	
	\begin{subsection}{Higgs Bundles on K\"ahler Manifolds}
		Let $E$ be a complex vector bundle over a K\"ahler manifold $X$. We denote by $\Omega_X^{p,q}(E)$ the space of $E$-valued $(p,q)$-forms on $X$. The holomorphic structures on $E$ can be identified with connections $\bar{\pa}_E:\Omega^0_X(E) \to \Omega^{0,1}_X(E)$ that satisfy the integrability condition $\bar{\pa}_E^2 = 0$. We usually write $\ME = (E, \bar{\pa}_E)$ to denote a holomorphic vector bundle with holomorphic structure defined by $\bar{\pa}_E$.
		
		\begin{definition}
			A Higgs bundle on $X$ is a pair $(\bar{\pa}_E, \vp)$ such that:
			\begin{itemize}
				\item [(i)] $\bar{\pa}_E: \Omega_X^0(E) \to \Omega_X^{0,1}(E)$ is a connection with $\bar{\pa}_E^2 = 0$, defining a holomorphic structure on $E$.
				\item [(ii)] $\vp: E \to E \otimes \Omega_X^{1,0}$ is holomorphic, i.e., $\bar{\pa}_E \vp = 0$. We write $\vp \in H^0(\Omega_X^{1,0}(\End(E)))$ and call $\vp$ the Higgs field.
				\item [(iii)] $\vp \wedge \vp = 0$.
			\end{itemize}
		\end{definition}
		
		For notation simplification, we usually write $(\bar{\pa}_E,\vp)$ for a Higgs bundle instead of $(E,\bar{\pa}_E,\vp)$. We now introduce the concept of stability for Higgs bundles.
		\begin{definition}
			For any holomorphic bundle $\ME = (E, \bar{\pa}_E)$, we define the slope of the bundle as $\mu(\ME) := \frac{\deg \ME}{\rank E}$. A Higgs bundle $(\bar{\pa}_E, \vp)$ is called \emph{stable} if, for every $\vp$-invariant coherent subsheaf $\MF \subset \ME$, where $\vp(\MF) \subset \MF \otimes \Omega_X^{1,0}$, we have $\mu(\MF) < \mu(\ME)$. A Higgs bundle $(\bar{\pa}_E, \vp)$ is called \emph{polystable} if it is a direct sum of stable Higgs bundles with the same slope.
		\end{definition}
		
		We define the complex gauge group $\MGC := \Aut(E)$, which acts on Higgs bundles as follows: for $g \in \MGC$, $g(\bar{\pa}_E, \vp) := (g^{-1} \circ \bar{\pa}_E \circ g, g^{-1} \circ \vp \circ g)$. This action preserves the stability condition. The orbits of polystable Higgs bundles under $\MGC$ are closed. The moduli space of polystable Higgs bundles is defined as
		$$
		\MMH := \{\text{Polystable Higgs bundles } (\bar{\pa}_E, \vp)\}/\MGC.
		$$
		
		We denote by $[(\bar{\pa}_E, \vp)]$ the equivalence class under the $\MGC$ orbit in $\MMH$.
	\end{subsection}
		
	\subsection{The Hitchin-Simpson Equations and the Kobayashi-Hitchin Correspondence}
	Now, we introduce the Hitchin-Simpson equations, which were defined by Hitchin \cite{hitchin1987self} for Riemann surfaces and generalized by Simpson for K\"ahler manifolds \cite{Simpson1988Construction}.
	
	\subsubsection{Hermitian Geometry on K\"ahler Manifolds}
	Let $X$ be a closed K\"ahler manifold with K\"ahler metric $g$. We denote the corresponding K\"ahler class by $\omega$. In an orthonormal frame $dz_i$ on $X$ with $|dz_i|^2 = 2$, we write $\omega = \frac{\sqrt{-1}}{2} \sum_{i=1}^n dz_i \wedge d\bar{z}_i$. Then, we can define the contraction map with the K\"ahler class, $\Lambda: \Omega_X^{p,q} \to \Omega_X^{p-1,q-1}$, which acts on $\Omega_X^{1,1}$ by 
	$$
	\Lambda \left( \sum_{\alpha\beta} a_{\alpha\beta} dz_{\alpha} \wedge d\bar{z}_{\beta} \right) = -2\sqrt{-1} \sum_{i=1}^n a_{ii}.
	$$ 
	Additionally, we have $\Lambda \alpha \cdot \vol_g = \frac{1}{(n-1)!} \alpha \wedge \omega^{n-1}$ for $\alpha \in \Omega_X^{1,1}$.
	
	Let $E$ be a rank $r$ complex vector bundle with Hermitian metric $H$, and let $d_A$ be a connection on $E$. The connection $d_A$ is called unitary with respect to the Hermitian metric $H$ if, for any sections $s_1, s_2$ of $E$, we have 
	$$
	dH(s_1, s_2) = H(d_A s_1, s_2) + H(s_1, d_A s_2).
	$$ 
	If we decompose $d_A = \partial_A + \bar{\partial}_A$, the connection $A$ is unitary if and only if 
	$$
	dH(s_1, s_2) = H(\bar{\partial}_A s_1, s_2) + H(s_1, \partial_A s_2).
	$$
	
	Given a Higgs bundle $(\bar{\partial}_E, \varphi)$, we can define the adjoints $\partial_H$ and $\varphi^{\dagger}$ as follows:
	\begin{equation*}
		\partial H(s_1, s_2) = H(\bar{\partial}_E s_1, s_2) + H(s_1, \partial_H s_2), \quad H(\varphi s_1, s_2) = H(s_1, \varphi^{\dagger} s_2),
	\end{equation*}
	where $s_1, s_2$ are sections of $E$. Given any Hermitian metric, we can define a unitary connection $d_A := \bar{\partial}_E + \partial_H$ and a Hermitian 1-form $\phi := \varphi + \varphi^{\dagger}$. We denote $D := d_A + \phi$; then $D$ is a complex connection with curvature $D^2 = F_A^{1,1} + [\varphi, \varphi^{\dagger}]$. If $D^2 = 0$, we call $D$ a \emph{flat connection}.
	
	For any connection $\bar{\partial}_A: \Omega_X^{p,q}(E) \to \Omega_X^{p,q+1}(E)$ and $\partial_A: \Omega_X^{p,q}(E) \to \Omega_X^{p+1,q}(E)$, using the metric on $E$ and the K\"ahler metric on $X$, we can define the adjoint operators $\bar{\partial}_A^{\ast}$ and $\partial_A^{\ast}$, which satisfy the K\"ahler identities:
	$$
	\partial_A^{\ast} = i[\Lambda, \bar{\partial}_A], \quad \bar{\partial}_A^{\ast} = i[\Lambda, \partial_A].
	$$
	
	Moreover, for the Laplacian $\Delta$ acting on $\Omega_X^0$, we have $\Delta = d^{\ast} d = 2\partial^{\ast} \partial = 2\bar{\partial}^{\ast} \bar{\partial} = 2\sqrt{-1}\Lambda \bar{\partial} \partial$. Additionally, for every $s \in \Omega_X^{1,0}(E)$, we denote by $s^{\dagger}$ the adjoint of $s$, and we have $\sqrt{-1}\Lambda (s \wedge s^{\dagger}) = |s|^2$.
	
	\subsubsection{The Hitchin-Simpson Equations on K\"ahler Manifolds}
	Let $(E, H)$ be a rank $r$ Hermitian vector bundle. The Hitchin-Simpson equations are equations for a unitary connection $A$ and a Hermitian 1-form $\phi = \varphi + \varphi^{\dagger}$:
	\begin{equation}
		\begin{split}
			&F_A^{0,2} = 0, \quad \varphi \wedge \varphi = 0, \quad \bar{\partial}_A \varphi = 0, \\
			&i \Lambda (F_A^{\perp} + [\varphi, \varphi^{\dagger}]) = 0,
		\end{split}
		\label{Hitchin-Simpson}
	\end{equation}
	where $F_A^{\perp} = F_A^{1,1} - \gamma(E) \Id$ is the trace-free part of $F_A^{1,1}$, and $\gamma(E) = \frac{2\pi \deg(\ME)}{(n-1)!r}$.
	
	The first three equations in \eqref{Hitchin-Simpson} are invariant under the complex gauge group $g \in \MGC$, which defines a Higgs bundle $(\bar{\partial}_A, \varphi)$. Additionally, $(d_A, \phi)$ is the Chern connection for $(\bar{\partial}_A, \varphi)$ together with the Hermitian metric $H$. Therefore, we can also regard \eqref{Hitchin-Simpson} as equations for a Higgs bundle and a Hermitian metric $(\bar{\partial}_E, \varphi, H)$.
	
	The complex gauge group $g \in \MGC$ acts on $(A, \phi)$ as follows:
	\begin{equation*}
		\begin{split}
			d_{A^g} &= \bar{\partial}_{A^g} + \partial_{A^g} = g^{-1} \circ \bar{\partial}_A \circ g + g^{\dagger} \circ \partial_A \circ (g^{\dagger})^{-1}, \\
			\phi^g &= \varphi^g + (\varphi^g)^{\dagger} = g^{-1} \circ \varphi \circ g + g^{\dagger} \circ \varphi^{\dagger} \circ (g^{\dagger})^{-1}.
		\end{split}
	\end{equation*}
	
	The complex gauge group $\MGC$ also acts on the Hermitian metric as $H^g(s_1, s_2) = H(gs_1, gs_2)$ for any sections $s_1$ and $s_2$. We now state the following proposition that explains the relationship between these two descriptions:
	\begin{proposition}
		\label{Proposition_idenpent}
		Let $E$ be a complex vector bundle over $X$. Suppose $(\bar{\partial}_A, \varphi, H)$ is a solution to the Hitchin-Simpson equations. Then $(\bar{\partial}_{A^g}, \varphi^g, H^g)$ is also a solution to the Hitchin-Simpson equations.
		
		Let $A^g, \phi^g$ be the Chern connection of $(\bar{\partial}_{A^g}, \varphi^g, H^g)$. Then we have $|F_{A^g}|_{H^g} = |F_A|_H$ and $|\varphi^g \wedge \varphi^g|_{H^g} = |\varphi \wedge \varphi|_H$.
	\end{proposition}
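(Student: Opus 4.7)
The plan is to recognize the complex gauge action as producing an isometric holomorphic isomorphism of Hermitian Higgs bundles, after which both the invariance of the equations and the equality of pointwise norms follow by naturality. Viewing $g$ as a bundle map $\tilde{g}:(E,H^g)\to (E,H)$, $s\mapsto g s$, the definition $H^g(s_1,s_2)=H(gs_1,gs_2)$ says exactly that $\tilde{g}$ is an isometry of Hermitian bundles. The definitions $\bar{\pa}_{A^g}=g^{-1}\bar{\pa}_A g$ and $\vp^g=g^{-1}\vp g$ translate to $\bar{\pa}_A\circ \tilde g=\tilde g\circ \bar{\pa}_{A^g}$ and $\vp\circ \tilde g=\tilde g\circ \vp^g$, so $\tilde{g}$ is a holomorphic intertwiner that also carries the new Higgs field to the old one. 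In short, $\tilde g$ is an isomorphism of Hermitian Higgs bundles $(\bar{\pa}_{A^g},\vp^g,H^g)\xrightarrow{\sim}(\bar{\pa}_A,\vp,H)$.

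From this isomorphism the first claim is immediate. By uniqueness of the Chern connection associated to a Dolbeault operator and a Hermitian metric, $\tilde g$ intertwines $d_{A^g}$ with $d_A$ and, in the same way, intertwines the Hermitian adjoint of $\vp^g$ with respect to $H^g$ with the Hermitian adjoint of $\vp$ with respect to $H$. Expanding these intertwining identities in a local frame reproduces the explicit formulas $\pa_{A^g}=g^{\da}\pa_A g^{\da -1}$ and $(\vp^g)^{\da}=g^{\da}\vp^{\da}g^{\da -1}$ quoted in the statement. Every term appearing in the Hitchin-Simpson equations \eqref{Hitchin-Simpson} --- namely $F_A^{0,2}$, $\bar{\pa}_A\vp$, $\vp\we\vp$, and $F_A^{\perp}+[\vp,\vp^{\da}]$ --- is built intrinsically out of the Hermitian Higgs bundle data together with the K\"ahler form $\omega$, and each therefore transforms under the isomorphism by the rule $X\mapsto g^{-1}Xg$. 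Vanishing of the original four quantities accordingly forces vanishing of the transformed ones, so $(\bar{\pa}_{A^g},\vp^g,H^g)$ again solves \eqref{Hitchin-Simpson}.

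For the norm identities, the point is simply that an isometry preserves pointwise norms on every associated bundle. Given any $\End(E)$-valued form $\al$, the pointwise $H^g$-norm of $g^{-1}\al g$ equals the pointwise $H$-norm of $\al$, since fibrewise $\tilde g$ is an isometric isomorphism of $(\End(E),H^g)$ with $(\End(E),H)$. Applied to $\al=F_A$ and $\al=\vp\we\vp$, this yields $|F_{A^g}|_{H^g}=|F_A|_{H}$ and $|\phi^g\we\phi^g|_{H^g}=|\phi\we\phi|_{H}$. The one place where I expect to need care is bookkeeping which metric the dagger refers to at each stage of the argument; once the elementary identity $H(gs_1,g^{\da -1}s_2)=H(s_1,s_2)$ is in hand, the passage between the intrinsic $\tilde g^{-1}(\cdot)\tilde g$ form of naturality and the explicit $g^{\da}(\cdot)g^{\da -1}$ formulas used in the statement is a purely algebraic check.
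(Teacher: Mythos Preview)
Your proof is correct and follows essentially the same route as the paper's. Both arguments reduce to the observation that, once one changes the metric to $H^g$, the Chern connection and Hermitian $1$-form of the new data are simply the conjugates $g^{-1}d_Ag$ and $g^{-1}\phi g$; the paper verifies this by a one-line direct computation of $H^g(\bar\partial^g s_1,s_2)$, while you phrase the same fact as naturality of the Chern connection under the isometric holomorphic isomorphism $\tilde g:(E,H^g)\to(E,H)$. One small slip: in your last paragraph you apply the norm identity to $\alpha=\vp\wedge\vp$, which is zero; you mean $\alpha=\phi\wedge\phi$ (equivalently $[\vp,\vp^{\dagger}]$), which is indeed $g$-conjugate to its transformed version by the same argument.
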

	\begin{proof}
		Let $s_1, s_2$ be two sections of $E$. We compute
		\begin{equation*}
			\begin{split}
				H^g(\bar{\partial}^g s_1, s_2) &= H(g \bar{\partial}^g s_1, g s_2) = H(\bar{\partial}(g s_1), g s_2) \\
				&= \partial H(g s_1, g s_2) - H(g s_1, \partial_A(g s_2)) = \partial H^g(s_1, s_2) - H^g(s_1, g^{-1} \partial_A g s_2).
			\end{split}
		\end{equation*}
		Therefore, using the Hermitian metric $H^g$ for the adjoint, we have $d_{A^g} = g^{-1} \circ d_A \circ g$. Similarly, we have $\phi^g = g^{-1} \circ \phi \circ g$. Writing $\phi^g = \varphi^g + (\varphi^g)^{\dagger}$, we obtain $F_{A^g} = g^{-1} \circ F_A \circ g$ and $[\varphi^g, (\varphi^g)^{\dagger}] = g^{-1} \circ [\varphi, \varphi^{\dagger}] \circ g$. The proposition follows immediately.
	\end{proof}
	
	We define the unitary gauge group $\MG := \{g \in \MGC \mid g g^{\dagger} = \Id_E\}$, which preserves \eqref{Hitchin-Simpson}. The moduli space of the Hitchin-Simpson equations $\MMHS$ is defined as
	\begin{equation}
		\MMHS := \{(A, \phi) \mid (A, \phi) \text{ satisfy } \eqref{Hitchin-Simpson}\}/\MG.
	\end{equation}
	
	Additionally, there is a natural map $\Xi: \MMHS \to \MMH$ that forgets the second equation of \eqref{Hitchin-Simpson}. We define $(A, \phi)$ as irreducible if $\Ker d_A|_{\Omega_X^0} = \Ker [\phi, \cdot]|_{\Omega_X^0} = 0$, and completely reducible if $(A, \phi)$ are direct sums of irreducible pairs.
	
	\begin{theorem}{\cite{hitchin1987self}, \cite[Theorem 1]{Simpson1988Construction}}
		\label{thm_nonabelian_Hogde}
		The map $\Xi$ is one-to-one. Explicitly, given any Higgs bundle $(\bar{\partial}_E, \varphi)$, there exists an irreducible solution $(A, \phi)$ to the Hitchin-Simpson equations \eqref{Hitchin-Simpson} with $[(\bar{\partial}_A, \phi^{1,0})] = [(\bar{\partial}_E, \varphi)] \in \MMH$ if and only if the Higgs bundle is stable, and a completely reducible solution if and only if it is polystable.
	\end{theorem}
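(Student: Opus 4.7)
The plan is to follow the classical Hitchin--Kobayashi strategy adapted to Higgs bundles. I would split the assertion into four sub-statements: (a) uniqueness up to unitary gauge, (b) a solution forces the underlying Higgs bundle to be polystable, (c) stability produces an irreducible solution, and (d) the polystable case reduces to the stable one.

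For (a), suppose two solutions $(\bar{\pa}_A,\vp,H_1)$ and $(\bar{\pa}_A,\vp,H_2)$ sit over the same Higgs bundle. Then $h:=H_1^{-1}H_2$ is a positive self-adjoint endomorphism with respect to either metric. Using the K\"ahler identities and the equation $i\Lam(F_A^{\perp}+[\vp,\vp^{\da}])=0$ applied to both metrics, a standard Bochner-Donaldson computation yields an inequality of the form $\Delta \log\Tr(h)\ge 0$ with a nonnegative term quadratic in $\bar{\pa}_A h\cdot h^{-1}$ and in $[\vp,h]$. Integration by parts on the closed manifold forces $\bar{\pa}_A h=0$ and $[\vp,h]=0$; decomposing along the eigenspaces of $h$ splits the Higgs bundle compatibly with $H_1$, so by irreducibility (or by a direct-sum decomposition in the polystable case) $h$ is a constant on each simple factor, i.e.\ the two solutions differ by a unitary gauge.

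For (b), given a solution $(A,\phi)$ and a $\vp$-invariant saturated subsheaf $\MF\subset \ME$, let $\pi_{\MF}$ denote the $H$-orthogonal projection. The Chern--Weil identity
\begin{equation*}
2\pi \deg(\MF)=\int_X\Tr(\pi_{\MF}\cdot i\Lam F_A^{1,1})\frac{\omega^n}{n!}-\int_X|\bar{\pa}_A\pi_{\MF}|^2\frac{\omega^n}{n!},
\end{equation*}
together with the moment-map equation and the identity $\Tr(\pi_{\MF}\cdot i\Lam[\vp,\vp^{\da}])=|[\vp,\pi_{\MF}]|^2\ge 0$, yields $\mu(\MF)\le \mu(\ME)$ with equality only when $\pi_{\MF}$ is holomorphic and $\vp$-commuting, i.e.\ $\MF$ is a direct summand. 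This proves polystability, and also distinguishes the irreducible from the reducible case.

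For (c), the hard direction, I would minimize Donaldson's functional $M(H_0,H)$ (Simpson's version adapted to Higgs bundles) over the space of Hermitian metrics $H=H_0\cdot e^s$ with $s$ self-adjoint and $\Tr s=0$. Its Euler--Lagrange equation is exactly the last line of \eqref{Hitchin-Simpson}, while its gradient flow is Simpson's nonlinear heat flow. Stability of $(\ME,\vp)$ implies a Simpson-type analytic bound $\|s\|_{L^1}\le C_1\;M(H_0,H)+C_2$, which makes the functional proper and prevents escape to a destabilizing subobject in a minimizing sequence; long-time existence and smooth subsequential convergence of the heat flow then produce a smooth solution. The main obstacle is this step: one has to rule out the possibility that the minimizing $s$ concentrates its eigenvalues in a way that produces, in the limit, a weakly holomorphic $\vp$-invariant subsheaf violating stability, which is precisely the analytical content of Simpson's argument. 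Finally, for (d), polystability decomposes $(\ME,\vp)=\bigoplus(\ME_i,\vp_i)$ into stable summands, (c) gives an irreducible solution on each, and their direct sum provides a reducible solution on $(\ME,\vp)$.
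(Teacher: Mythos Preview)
The paper does not give its own proof of this theorem: it is stated with a citation to Hitchin and Simpson and then used as a black box. So there is no argument in the paper to compare your proposal against.

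That said, what you have written is a faithful outline of the classical Hitchin--Kobayashi argument as it appears in the cited references. Part (b) via the Chern--Weil/second-fundamental-form computation and part (d) via direct sums are exactly the standard steps and are correct as stated. Part (a) is slightly misstated: the inequality one actually derives is of the form $\Delta\Tr(h)\le 0$ (or equivalently $i\Lam\bar\partial\partial\Tr(h)\ge 0$) rather than on $\log\Tr(h)$; the conclusion is the same, but be careful with the sign convention once you actually write it out. For part (c), your sketch is honest about where the real work lies---the properness estimate $\|s\|_{L^1}\le C_1 M(H_0,H)+C_2$ under stability, and the regularity of the limiting destabilizing subsheaf---and those are precisely the theorems of Simpson you would need to invoke or reproduce. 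As a proof \emph{proposal} this is fine; as a self-contained proof it is of course only a roadmap to Simpson's paper.
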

	
	\subsubsection{Topologically Trivial Higgs Bundles}
	For every pair $(A, \phi = \varphi + \varphi^{\dagger})$ with $\bar{\partial}_A \varphi = 0$ and $\varphi \wedge \varphi = 0$, we define the energy function of $(A, \phi)$ as
	$$
	\MRE(A, \phi) = \int_X |F_A + [\varphi, \varphi^{\dagger}]|^2 + |d_A \phi|^2,
	$$
	then we have the following proposition:
	
	\begin{proposition}{\cite[Proposition 3.4]{Simpson1988Construction}}
		\label{energyidentity}
		Let $(A, \phi)$ be a solution to the Hitchin-Simpson equations \eqref{Hitchin-Simpson}. Then
		$$
		\MRE(A, \phi) = 4\pi^2 \int_X \ch_2(E) \wedge \frac{\omega^{n-2}}{(n-2)!} + \gamma^2 \rank(E),
		$$
		where $\gamma:=\frac{2\pi \deg(E)}{(n-1)!r}$.
	\end{proposition}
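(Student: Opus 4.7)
The plan is to encode both sides of the identity in terms of the complex connection $D := d_A + \phi$, whose curvature is $D^2 = F_A + d_A\phi + \phi\we\phi$. Under the Hitchin--Simpson equations \eqref{Hitchin-Simpson} (together with their Hermitian adjoints $F_A^{2,0} = 0$, $\vp^{\da}\we\vp^{\da} = 0$, $\pa_A\vp^{\da} = 0$), the Hodge-type decomposition of $D^2$ collapses to
\begin{equation*}
D^2 \;=\; \pa_A\vp \;+\; \bigl(F_A + [\vp,\vp^{\da}]\bigr) \;+\; \bar{\pa}_A\vp^{\da},
\end{equation*}
of pure types $(2,0)$, $(1,1)$, $(0,2)$. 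Since these three components are pairwise $L^2$-orthogonal and $(\pa_A\vp)^{\da} = \bar{\pa}_A\vp^{\da}$, one obtains the pointwise identity $|D^2|^2 = |F_A + [\vp,\vp^{\da}]|^2 + |d_A\phi|^2 = \MRE(A,\phi)$.

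I would then compute $\int_X \Tr(D^2\we D^2)\we \omega^{n-2}/(n-2)!$ in two complementary ways. On the topological side, applying Chern--Weil to the family $D_t := d_A + t\phi$ of (complex) connections on $E$, one has $\tfrac{d}{dt}D_t^2 = d_{D_t}\phi$ and $d_{D_t}D_t^2 = 0$ by Bianchi, so $\tfrac{d}{dt}\Tr(D_t^2\we D_t^2) = 2d\,\Tr(D_t^2\we\phi)$; hence $\Tr(D^2\we D^2) - \Tr(F_A\we F_A)$ is exact. Wedging with the closed form $\omega^{n-2}/(n-2)!$ and integrating over the closed manifold $X$ eliminates the exact piece, so the integral equals the topological pairing $4\pi^2 \int_X \ch_2(E)\we \omega^{n-2}/(n-2)!$ in the paper's Chern-character normalization.

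On the analytic side I would use the type decomposition of $D^2$. For dimension reasons only the $(2,2)$-components survive the wedge with $\omega^{n-2}/(n-2)!$, so the integrand reduces to $\bigl[\Tr(G\we G) + 2\,\Tr(\pa_A\vp\we\bar{\pa}_A\vp^{\da})\bigr]\we \omega^{n-2}/(n-2)!$, where $G := F_A + [\vp,\vp^{\da}]$. The primitive K\"ahler identity applied to the automatically primitive $(2,0)$-form $\pa_A\vp$ gives $\Tr(\pa_A\vp\we\bar{\pa}_A\vp^{\da})\we \omega^{n-2}/(n-2)! = |\pa_A\vp|^2\,\vol_g$. For the $(1,1)$-form $G$, valued in the skew-Hermitian bundle, the Lefschetz decomposition $G = G_0 + (\Lam G/n)\,\omega$ combined with $G^{\da} = -G$ yields $\Tr(G\we G)\we \omega^{n-2}/(n-2)! = (|G|^2 - |\Lam G|^2)\,\vol_g$. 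The last Hitchin--Simpson equation $i\Lam G = \gamma \Id_E$ gives $|\Lam G|^2 = \gamma^2\rank(E)$, so the integral evaluates to $\MRE(A,\phi) - \gamma^2\rank(E)$. Equating the topological and analytic expressions produces the claimed identity.

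The principal bookkeeping obstacle is the third step: tracking the signs in the primitive K\"ahler identities, since on primitive $(1,1)$-forms the sign of $\star$ (which is $-\omega^{n-2}/(n-2)!\we\cdot$) is compensated by the skew-Hermitian structure of $G$, producing the positive Lefschetz identity used above; the precise numerical factor $4\pi^2$ is then fixed by the normalization of $\ch_2$ relative to $\Tr(F_A\we F_A)$.
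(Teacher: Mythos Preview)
Your argument is correct and is precisely the standard Bogomolov--L\"ubke type computation that underlies the paper's energy identity \eqref{Eq_Energyidentities}; the paper simply states that identity without writing out the derivation, whereas you have supplied it in full. The only difference in packaging is that the paper records the identity in the form
\[
\MRE(A,\phi)=\int_X|i\Lambda(F_A+[\vp,\vp^{\da}])-\gamma\Id|^2+\gamma^2\rank(E)+4\pi^2\int_X\ch_2(E)\we\frac{\omega^{n-2}}{(n-2)!},
\]
valid for \emph{any} pair with $F_A^{0,2}=0$, $\bar{\pa}_A\vp=0$, $\vp\we\vp=0$, and then specializes to solutions of the last Hitchin--Simpson equation to kill the first term. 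Your computation yields exactly the same formula once you keep the general $\Lambda G$ in the Lefschetz step instead of immediately substituting $i\Lambda G=\gamma\Id_E$; so the approaches coincide, with the paper's version having the mild advantage of making the role of the fourth equation explicit (it is what turns the inequality into an equality).
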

	\begin{proof}
		We have the following energy identity:
		\begin{equation}
			\label{Eq_Energyidentities}
			\MRE(A, \phi) = \int_X |\sqrt{-1}\Lambda(F_A + [\varphi, \varphi^{\dagger}] - \gamma \Id)|^2 + \gamma^2 \rank(E) + 4\pi^2 \int_X \ch_2(E) \wedge \frac{\omega^{n-2}}{(n-2)!}.
		\end{equation}
	\end{proof}
	
	\begin{corollary}{\cite[Proposition 3.4]{Simpson1988Construction}}
		\label{Cor_Whenaconnectionflat}
		Let $(A, \phi)$ be a solution to the Hitchin-Simpson equations \eqref{Hitchin-Simpson}. Suppose $\deg(E) = 0$ and $\ch_2(E) \cdot [\omega]^{n-2} = 0$. Then \eqref{Hitchin-Simpson} is equivalent to the connection $d_A + \varphi + \varphi^{\dagger}$ being flat:
		\begin{equation}
			\label{eq_flat_connection}
			\begin{split}
				&F_A^{0,2} = 0, \quad \varphi \wedge \varphi = 0, \quad \bar{\partial}_A \varphi = 0, \\
				&F_A^{1,1} + [\varphi, \varphi^{\dagger}] = 0.
			\end{split}
		\end{equation}
	\end{corollary}
	
	\begin{definition}
		A Higgs bundle $(\bar{\partial}_E, \varphi)$ is called \emph{topologically trivial} if $\deg(E) = 0$ and $\ch_2(E) \cdot [\omega]^{n-2} = 0$.
	\end{definition}
	
	\end{section}
	
\begin{section}{Hitchin Morphism and the Spectral Variety}
	\label{sec_Hitchin_morphism_the_spectral_variety}
	In this section, we introduce the spectral geometry of the Higgs bundle moduli space, which was first introduced by Hitchin on Riemann surfaces \cite{hitchin1987self,hitchin1987stable} and later studied by Chen-Ngo \cite{chenngo2020hitchin} for higher-dimensional varieties. For further insights into the Hitchin morphism, we refer to \cite{heliu2023spectralvariety,heliumok2023rigidity}.
	
	\subsection{Hitchin Morphism}
	Let $(\bar{\pa}_E, \varphi)$ be a Higgs bundle over $X$. Since $\varphi \in H^0(X, \Omega^{1,0}_X(\End(\ME)))$, if $p_k$ is an invariant polynomial of $\End(\ME)$ with $\deg(p_k) = k$, it defines a map 
	$$
	p_k: H^0(X, \Omega^{1,0}_X(\End(\ME))) \to H^0(\Sym^k \Omega^{1,0}_X),
	$$ 
	where $\Sym^k \Omega_X^{1,0}$ denotes the symmetric product of $\Omega_X^{1,0}$. We define the Hitchin base 
	$$
	\MA_X := \oplus_{i=1}^r H^0(X, \Sym^i \Omega_X^{1,0}),
	$$ 
	and the Hitchin morphism as:
	\begin{equation}
		\begin{split}
			\kappa &: \MMH \to \MA_X; \\
			\kappa([(\bar{\pa}_E, \varphi)]) &= (p_1(\varphi), \cdots, p_r(\varphi)).
		\end{split}
	\end{equation}
	
	When $\dim(X) = 1$, the condition $\varphi \wedge \varphi = 0$ is automatically satisfied, and the Hitchin morphism becomes a fibration \cite{hitchin1992lie}, forming an algebraic integrable system. In higher dimensions, the additional condition $\varphi \wedge \varphi = 0$ imposes strong restrictions on the image of the Hitchin morphism, so $\kappa$ is not surjective.
	
	We now briefly recall the definition of the spectral base, introduced by Chen-Ngo \cite{chenngo2020hitchin} and studied in \cite{heliu2023spectralvariety,heliumok2023rigidity}.
	
	\begin{definition}
		\label{d.spectralbase}
		The spectral base $\MB_X$ is the subset of $\MA_X$ consisting of elements $\textbf{s} = (s_1, \dots, s_r) \in \MA_X$ such that for any point $x\in X$, there exist $r$ elements $\lambda_1, \dots, \lambda_r \in \Omega_X^{1,0}|_x$ satisfying $s_i(x) = \sigma_i(\lambda_1, \dots, \lambda_r)$, where $\sigma_i$ is the $i$-th elementary symmetric polynomial in $r$ variables. An element $\textbf{s} \in \MB_X$ is called a spectral datum.
	\end{definition}
	
	Roughly, given $\mbs = (s_1, \cdots, s_r) \in \MB_X$, we may think of it as corresponding to an $r$-valued holomorphic $(1,0)$-form $(\lambda_1, \cdots, \lambda_r)$ such that $s_i = \sigma(\lambda_1, \dots, \lambda_r)$. In \cite{chenngo2020hitchin}, Chen-Ngo proved that $\MB_X$ is a closed subscheme. Moreover, the Hitchin morphism factors through the spectral base as follows:
	
	\begin{proposition}[\protect{\cite[Proposition 5.1]{chenngo2020hitchin}}]
		\label{prop_spectralbase}
		The Hitchin morphism $\kappa: \MMH \to \MA_X$ factors through the natural inclusion map $\iota_X: \MB_X \to \MA_X$, such that the following diagram commutes:
		\begin{equation}
			\label{e.spectralmorphism}
			\begin{tikzcd}[column sep=large,row sep=large]
				\MMH \arrow[d,"{}" left] \arrow{dr}{\kappa} &  \\
				\MB_X \arrow[r,"{\iota_X}" below] & \MA_X.
			\end{tikzcd}
		\end{equation}
	\end{proposition}
	\begin{proof}
		Let $(\bar{\pa}_E, \varphi)$ be a Higgs bundle with $\Tr(\varphi^i) = s_i \in H^0(X, \Sym^i \Omega_X^1)$. At any point $x\in X$, let $dz^1, dz^2, \cdots, dz^n$ be a local frame of $\Omega^{1}_X$ at $x\in X$. After choosing a frame for $E$, we can write $\varphi(x) = \sum_{i=1}^n B_i dz^i$, where the $B_i$ are $r \times r$ matrices. The condition $\varphi \wedge \varphi = 0$ implies $[B_i, B_j] = 0$ for all $1 \leq i \leq j$. Thus, the $B_i$ can be simultaneously triangularized, and so can $\varphi(x)$ as an 1-form valued $r \times r$ matrix. In particular, after changing local coordinates, we may assume that $\varphi(x)$ is an upper triangular matrix, and let $\lambda_1, \dots, \lambda_r \in \Omega^{1,0}_{X}|_x$ be its diagonal elements. By the definition of the Hitchin morphism, we have $s_i(x) = \sigma_i(\lambda_1, \dots, \lambda_r)$, which proves the claim.
	\end{proof}
	
	\subsection{The Spectral Variety}
	We now define the spectral variety. Given $\mbs = (s_1, \cdots, s_r) \in \MB_X$, the spectral variety $S_{\mbs}$ of $X$ is defined as
	\begin{equation}
		S_{\mbs} := \{\lambda \in \Omega_X^{1,0} \mid \lambda^r + p_1 \lambda^{r-1} + \cdots + p_r = 0\}.
	\end{equation}
	Locally, we can choose a trivialization of $\Omega_X^{1,0} \cong \mathbb{C}^n$ and consider the defining equation of $S_{\mbs}$ on each component of $\mathbb{C}^n$. The integrability condition $\varphi \wedge \varphi = 0$ ensures that the definition of the spectral variety is not overdetermined and independent of the choice of trivialization. For further discussion, see \cite{Katzarkov2013notes,chenngo2020hitchin}.
	
	Let $(\bar{\pa}_E, \varphi)$ be a Higgs bundle. For each $x\in X$, we define $\lambda \in \Omega_X^{1,0}$ as an eigenvalue of $\varphi$ if there exists a section $s \in E|_x$ such that $\varphi s = \lambda s$. Moreover, as $\bar{\pa}_E\vp=0$, $\lambda$ will also be holomorphic. The condition $\varphi \wedge \varphi = 0$ guarantees that at every point of $X$, $\varphi$ has well-defined eigenvalues. Therefore, given a Higgs bundle $(\bar{\pa}_E, \varphi)$ with spectral cover $S_{\mbs}$, the points in $S_{\mbs}|_x$ correspond to the $r$ eigenvalues of $\varphi|_x$.
	
	Let $\lambda_1, \lambda_2, \cdots, \lambda_r$ be the $r$ eigenvalues of $\varphi$ and write $\mbs := \kappa(\varphi) \in \MB_X$. We define the discriminant section as:
	\begin{equation}
		\begin{split}
			\Delta_{\mbs}: X \to \Sym^{r^2 - r} \Omega_X^{1,0}, \quad p \mapsto \sum_{1 \leq i < j \leq r} (\lambda_i(x) - \lambda_j(x))^2.
			\label{Eq_discriminantsection}
		\end{split}
	\end{equation}
	The zero set of the discriminant section is called the \emph{discriminant locus}, denoted as $Z_{\mbs}$ or $Z_{\vp}$. The spectral variety is ramified along the discriminant locus for the projection $\pi: S_{\mbs} \to X$. Since $Z_{\mbs}$ is the zero section of the holomorphic section $\Delta_{\mbs}$, it is a complex analytic subvariety.
	
	There are many examples of Higgs bundles $(\bar{\pa}_E, \varphi)$ where the discriminant locus $Z_{\vp} = X$, particularly for $\rank\;E = 3$. For any rank-2 Higgs bundle $(\bar{\pa}_E, \varphi)$, let $(L, 0)$ be a rank-1 Higgs bundle with a vanishing Higgs field. Then, the discriminant locus of $(E \oplus L, \varphi \oplus 0)$ will be $X$. For discussions on compactness results in this situation, see \cite{segman2024localasymptotics}. 
	
	We would like to consider the following class of Higgs bundles, which generically $\vp$ has $r$ different eigenvalues:
	\begin{definition}
		A Higgs bundle $(\bar{\pa}_E,\vp)$ is called \emph{generically semi-simple} if the discriminant locus $Z_{\vp}\neq X$.
	\end{definition}
	
	\subsection{Rank-Two Spectral Base}
	We now consider the spectral base for rank-2 traceless Higgs bundles. We define the moduli space of rank-2 traceless Higgs bundles as:
	\begin{equation}
		\label{eq_moduli_ranktwo_traceless}
		\begin{split}
			\MMH^2 := \{(\bar{\pa}_E, \varphi) \in \MMH \mid \mathrm{rank}\; E = 2, \; \Tr(\varphi) = 0\}/\MGC.
		\end{split}
	\end{equation}
	We will now study the image of the Hitchin morphism on the moduli space of rank-2 traceless Higgs bundles $\MMH^2$. We first consider:
	\begin{equation}
		\label{eq_spectral_base_Ranktwo}
		\MB_X^2 = \{s \in H^0(\Sym^2 \Omega_X^{1,0}) \mid \rank(s) \leq 1\},
	\end{equation}
	where for $x\in X$, $s|_x$ is a second symmetric tensor, and $\rank(s) \leq 1$ means that either $s|_x = 0$ or $s|_x = v \otimes v$ for $v \in \Omega_X^{1,0}|_x$.
	
	\begin{proposition}
		\label{Prop_discriminantlocus}
		For the restriction of the Hitchin morphism, we have $\kappa(\MMH^2) \subset \MB_X^2$. Moreover, for $\mbs \neq 0 \in \MB_X^2$, the discriminant locus $Z_{\mbs}$ is a complex analytic subvariety with codimension at least 2.
	\end{proposition}
	\begin{proof}
		For $(\bar{\pa}_E, \varphi) \in \MMH^2$, $\varphi$ is a rank-2 traceless Higgs bundle. The eigenvalues of $\varphi$ come in pairs $-\lambda, \lambda$, and $\kappa(\varphi) = \Tr(\varphi^2) = 2(\lambda \otimes \lambda) \in \MB_X^2$.
		
		For $\mbs \neq 0 \in \MB_X^2$, $Z_{\mbs}$ is the zero set of $\mbs \in H^0(\Sym^2 \Omega_X^1)$, which is a complex analytic subvariety with codimension at least 2.
	\end{proof}
	
	Thus, since $\kappa(\MMH^2) \subset \MB_X^2$, we call $\MB_X^2$ the \emph{rank-two spectral base}, which serves as the spectral base for rank-2 traceless Higgs bundles. The spectral base $\MB_X^2$ has interesting geometric meaning, as it represents the space of rank-1 symmetric differentials and has been studied by Bogomolov-Oliveria \cite{bogomolov2011symmetric}. When $X = \Sigma$ is a Riemann surface, let $K_{\Sigma}$ be the canonical bundle. Then, $\MB_{\Sigma}^2 = H^0(K_{\Sigma}^2)$ is the space of quadratic differentials. Thus, the spectral base $\MB_X^2$ can be understood as a generalization of quadratic differentials to higher-dimensional varieties.
	
	\subsection{$\ZT$ Harmonic 1-Forms}
	\label{subsection_ZTharmonic_1form}
	The theory of $\ZT$ harmonic 1-forms was developed by Taubes \cite{Taubes20133manifoldcompactness} and has been studied from various perspectives \cite{takahashi2015moduli, zhang2017rectifiability}. We now discuss the relationship between $\ZT$ harmonic 1-forms and the rank-two spectral base $\MB_X^2$ and refer \cite[Section 4]{heparker2024notes} for similar discussions.
	
	We first recall the definition of $\ZT$ harmonic 1-forms:
	\begin{definition}
		\label{def_ZTharmonicform}
		A $\mathbb{Z}/2$ harmonic 1-form on $X$ is given by a closed subset $Z \subsetneq X$ and a two-valued section $\bv$ of $T^*X$ on the complement of $Z$, such that the following conditions hold:
		\begin{enumerate}
			\item For each $p \notin Z$, there exists an open neighborhood $U_p \subset X \setminus Z$ such that the values of $\bv$ on $U$ are of the form $\pm v$, where $v$ is a non-vanishing section of $T^*X|_U$ with $dv = 0$ and $d^*v = 0$.
			\item For every open subset $U \subset X$ such that $\overline{U}$ is compact, we have $\int_{U \setminus Z} |\bv|^2 < +\infty$ and $\int_{U \setminus Z} |\nabla \bv|^2 < +\infty$.
		\end{enumerate}
	\end{definition}
	
	The concept of $\ZT$ harmonic 1-forms and the rank-two Hitchin base are equivalent. Given a $\ZT$ harmonic 1-form $\bv$, over each $U_p \subset M \setminus Z$, we write $\bv = \pm v$, and we define $s := v^{1,0} \otimes v^{1,0}$. Then $s$ is well-defined over $M \setminus Z$. As $v$ is harmonic, a straightforward computation using the K\"ahler identity shows that $\bar{\pa} s = 0$. The condition $\rank(s) \leq 1$ is automatically satisfied, so $s \in \MB_X^2$.
	
	The inverse construction is also straightforward. Given $s \in \MB_X^2$, let $Z_s := s^{-1}(0)$ be the discriminant locus. Then, over $X \setminus Z_s$, since $\rank(s) = 1$, a $\ZT$ harmonic 1-form $\bv$ is defined as $\bv := \pm \Re(\sqrt{s})$ on $X \setminus Z_s$.
	
	\subsection{Examples of Rank-Two Spectral Base}
	In this subsection, we present several examples of rank-two spectral bases, or equivalently, $\ZT$ harmonic 1-forms for specific K\"ahler manifolds.
	
	\begin{example}
		When $X = \Sigma$ is a Riemann surface with genus $g(\Sigma) \geq 2$, we have $\MB_X^2 = H^0(K_X^2)$, which is the space of quadratic differentials.
	\end{example}
	
	\begin{example}
		When $X$ is a rational surface, $K3$ surface, Enriques surface, or a hypersurface in $\mathbb{CP}^3$, it has been shown in \cite{sakai1979symmetric} that $H^0(\Sym^2 \Omega_X^{1,0}) = 0$, so $\MB_X^2 = 0$. Thus, there are no $\ZT$ harmonic 1-forms in these cases.
	\end{example}
	
	\begin{example}
		When $X = \Sigma_1 \times \Sigma_2$ with $\Sigma_1, \Sigma_2$ being Riemann surfaces of genus $g_1, g_2 \geq 1$, let $\pi_i: X \to \Sigma_i$ be the projection, and write $K_i := \pi_i^* K_{\Sigma_i}$. Then,
		$$
		\Omega_X^{1,0} = K_1 \oplus K_2, \quad H^0(X, \Sym^2 \Omega_X^{1,0}) = \oplus_{i,j=1}^2 H^{0}(X, K_i \otimes K_j).
		$$
		Given $s \in \MB_X^2$, we write $s = (s_{ij}) \in \oplus_{i,j=1}^2 H^0(X, K_i \otimes K_j)$. The condition $\rank(s) \leq 1$ is equivalent to $s_{11}s_{22} = s_{12}^2$. The discriminant locus of $s$ is $s^{-1}(0) = s_{11}^{-1}(0) \cap s_{22}^{-1}(0)$.
		
		If $s_{11} = 0$ but $s_{22} \neq 0$, then $s_{12} = 0$ and $Z = s^{-1}(0) = s_{22}^{-1}(0) = \Sigma_1 \times \{q_1, q_2, \dots, q_{4g_2-4}\}$, which has codimension 2. The corresponding $\ZT$ harmonic 1-form will be the pullback of a $\ZT$ harmonic 1-form from the Riemann surface. Similar behavior occurs when $s_{22} \neq 0$ and $s_{11} = 0$.
		
		If $s_{11}$ and $s_{22}$ are not identically zero, since $H^0(X, K_i^{\otimes 2}) = \pi_i^* H^0(\Sigma_i, K_{\Sigma_i}^{\otimes 2})$, we have $s_{11}^{-1}(0) = \{p_1, p_2, \dots, p_{4g_1-4}\} \times \Sigma_2$, and $s_{22}^{-1}(0) = \Sigma_1 \times \{q_1, q_2, \dots, q_{4g_2-4}\}$. Additionally, the condition $s_{12}^2 = s_{11}s_{22}$ requires that the multiplicities of $s_{11}^{-1}(0)$ and $s_{22}^{-1}(0)$ must be even. In this case, the singular set $Z$ for the corresponding $\ZT$ harmonic 1-form is $Z = 2\{p_1, \dots, p_{2g_1-2}\} \times 2\{q_1, \dots, q_{2g_2-2}\}$, which has codimension 4, and the corresponding line bundle over $X \setminus Z$ will extend globally to $X$.
	\end{example}
	
	\begin{example}
		Let $X$ be a ruled surface or a non-isotrivial elliptic surface with reduced fibers and a fibration $\pi: X \to \Sigma$, where $\Sigma$ has genus $g$. By \cite{chenngo2020hitchin}, the pullback map
		$$
		\pi^*: H^0(\Sigma, K_\Sigma^{\otimes 2}) \to H^0(X, \Sym^2 \Omega_X^{1,0})
		$$
		is an isomorphism, which implies $\MB_X^2 = \pi^* H^0(\Sigma,K_{\Sigma}^2)$. Thus, for any $s \in \MB_X^2$, we can write $s = \pi^* q$ with $q \in H^0(K_{\Sigma}^{\otimes 2})$, and $Z = s^{-1}(0) = \cup_{i=1}^{4g-4} \pi^{-1}(p_i)$, where $p_1, \dots, p_{4g-4}$ are the zeros of $q$. The corresponding $\ZT$ harmonic 1-form will be the pullback from the Riemann surface.
	\end{example}
	
	We now introduce an interesting example of the spectral base on a projective variety, due to Bogomolov and Oliveira.
	
	\begin{example}{\cite[Section 3]{bogomolov2011symmetric}}
		\label{example_bogomolov}
		In \cite{bogomolov2011symmetric}, an example of a compact, simply-connected, smooth projective surface $X$ is constructed such that $\MB_X^2 \neq 0$. We briefly recall the construction of $X$ from \cite{bogomolov2011symmetric}. Let $\mathbb{A}^3$ be a 3-dimensional abelian variety, and let $\hat{X}$ be a smooth hypersurface in $\mathbb{A}^3$. Let $\sigma = -\Id$ be the natural involution on $\mathbb{A}^3$, and assume that the action of $\sigma$ on $\hat{X}$ has only one fixed point $p$. Then, $\hat{X}/\sigma$ has an $A_2$-type singularity, and we define $X$ to be the minimal resolution of $\hat{X}/\sigma$.
		
		It is proved in \cite[Section 3]{bogomolov2011symmetric} that $\pi_1(X)$ is trivial. Moreover, there exists a unique holomorphic 1-form $\omega$ on $\mathbb{A}^3$ such that $\omega|_{T_p(\hat{X})} = 0$ (up to constant). Then, $\omega^2$ is a holomorphic section of $\Sym^2 \Omega_{\hat{X}/\sigma}^{1,0}$ on $\hat{X}/\sigma$, and it has a holomorphic extension $s$ to $X$. Additionally, $s \in \MB_X$, and it is a rank-two symmetric differential.
	\end{example}
	
	\subsection{Hitchin Section for Rank-Two Higgs Bundles}
	In this subsection, we introduce the construction of the Hitchin section for rank-2 traceless Higgs bundles over general projective varieties, developed by the author and Liu \cite{heliu2023spectralvariety}. From Proposition \ref{Prop_discriminantlocus}, we know that $\kappa(\MMH^2) \subset \MB_X^2$. We are now interested in whether the map $\kappa: \MMH^2 \to \MB_X^2$ is surjective.
	
	This key observation is explained by Bogomolov and Oliveira in \cite{bogomolov2011symmetric}:
	
	\begin{proposition}[\protect{\cite[p.1089]{bogomolov2011symmetric}}]
		\label{p.decomposition-symmetric-differetials}
		For $s \in \MB_X^2$, there exists a line bundle $\ML$ and holomorphic sections 
		$$
		\alpha \in H^0(\Omega_X^{1,0} \otimes \ML^{-1}), \quad \tau \in H^0(\ML^2)
		$$
		such that $s = \alpha^2 \tau$. Furthermore, the divisor defined by the zeros of $\tau$ is the same as the divisor defined by the zeros of $s$.
	\end{proposition}
	
	Thus, given $s \in \MB_X^2$, with $\alpha$ and $\tau$ as above, we can view $\alpha: \ML \to \Omega_X^{1,0}$ as a holomorphic map. We define $\beta := \frac{1}{2} \alpha \tau \in H^0(\Omega_X^{1,0} \otimes \ML)$, which can be viewed as a holomorphic map $\beta: \ML^{-1} \to \Omega_X^1$. Then, we define a Higgs bundle as follows:
	\begin{equation}
		\left(\ME_s = \ML \oplus \MO_X, \varphi_s = \begin{pmatrix}
			0 & \beta \\
			\alpha & 0
		\end{pmatrix}\right).
	\end{equation}
	We have $\kappa(\varphi_s) = \Tr(\varphi_s^2) = s$ and $\varphi_s \wedge \varphi_s = 0$. Moreover, by \cite[Proposition 5.7]{heliu2023spectralvariety}, for $s \neq 0 \in \MB_X^2$, $(\ME_s, \varphi_s)$ is polystable. Therefore, this defines a section of the map $\kappa: \MMH^2 \to \MB_X^2 \setminus \{0\}$. In summary, we have the following:
	
	\begin{theorem}{\cite[Theorem 1.2, Proposition 5.7]{heliu2023spectralvariety}}
		\label{thm_heliu_surjective_Hitchin_morphism}
		Given $s \in \MB_X$, there exists a traceless, rank-2, polystable Higgs bundle $(\ME_s, \varphi_s) \in \MMH^2$ such that $\kappa[(\ME_s, \varphi_s)] = s$.
	\end{theorem}
	
\end{section}

\begin{section}{$L^2_1$ Convergence and Eigenvalue Estimates}
	In this section, we initiate the analysis of sequences of solutions $(A_i, \phi_i)$ to \eqref{Hitchin-Simpson}. We will establish an $L^2_1$ estimate for $|\phi_i|$ and provide a $\MC^{\infty}$ estimate for the invariant polynomials associated with $\phi_i$.
		
	\begin{subsection}{Useful Identities}
		We begin by deriving some useful identities for solutions to the Hitchin-Simpson equations. Let $(E,H)$ be a Hermitian vector bundle over $X$, and let $(A, \phi = \vp + \vp^{\dagger})$ be a solution to the Hitchin-Simpson equations. Then, we have the following:
		
		\begin{lemma}
			\label{lemmalocalestimate}
			\label{Lemma_conjugatevanishing}
			\begin{itemize}
				\item [(i)] $\langle [\sqrt{-1}\Lambda[\vp, \vp^{\dagger}], \vp], \vp \rangle = \frac{1}{2} |[\vp, \vp^{\dagger}]|^2$.
				\item [(ii)] Let $s$ be a holomorphic section of $E$ with $[\vp, s] = 0$, then
				\begin{equation}
					\Delta |s|^2 + 2|\partial_A s|^2 + 2|[\vp^{\dagger}, s]|^2 = 0,
				\end{equation}
				\item [(iii)] Let $(A, \phi = \vp + \vp^{\dagger})$ be a solution to the Hitchin-Simpson equations \eqref{Hitchin-Simpson}. Denote by $\nabla_A$ the connection induced by $d_A$ and the Levi-Civita connection on $\Omega^1(\End(E))$. Then, the following identity holds:
				\begin{equation}
					\Delta |\phi|^2 + 2|\nabla_A \phi|^2 + 2|[\vp, \vp^{\dagger}]|^2 + 2\Ric(\phi, \phi) = 0,
					\label{Eq_integralbyparts}
				\end{equation}
				where, if we choose a real orthonormal frame $dx_i$ and write $\phi = \sum_{i=1}^{2n} \phi_i dx_i$, then $\Ric(\phi, \phi) = \sum_{i,j=1}^{2n} \Ric_{ij} \phi_i \phi_j$, and $\Ric_{ij}$ is the Ricci curvature on the plane spanned by $dx_i \wedge dx_j$.
			\end{itemize}
		\end{lemma}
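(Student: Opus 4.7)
The three identities all combine the K\"ahler identities with the structure of the Hitchin--Simpson system, so I would treat them in order of increasing complexity.

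\emph{Part (i)} is a pointwise algebraic identity. In a local unitary frame for $E$ together with K\"ahler normal coordinates, write $\vp=\sum_i \vp_i\,dz_i$, so that the contraction convention $i\Lam(dz_i\we d\bar z_j)=2\delta_{ij}$ yields $i\Lam[\vp,\vp^\da]=2\sum_i[\vp_i,\vp_i^\da]$. Expanding the outer commutator against $\vp=\sum_k \vp_k\,dz_k$ and pairing in the Hermitian inner product on $\End(E)\ot\Omega^{1,0}$, repeated use of trace cyclicity together with the integrability relation $[\vp_i,\vp_j]=0$ (equivalent to $\vp\we\vp=0$) collapses the resulting double sum into $\tfrac12\sum_{i,j}|[\vp_i,\vp_j^\da]|^2$, which is exactly $\tfrac12|[\vp,\vp^\da]|^2$ after accounting for the norm of $dz_i\we d\bar z_j$.

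\emph{Part (ii)} uses the scalar K\"ahler identity $\Delta f=2i\Lam\bpa\pa f$ applied to $f=|s|^2$. Since $\bpa_A s=0$, the Chern-connection Leibniz rule gives $\pa|s|^2=H(\pa_A s,s)$, hence
\[
\bpa\pa|s|^2 \;=\; H(\bpa_A\pa_A s,s)-H(\pa_A s,\pa_A s).
\]
After $i\Lam$, the second term contributes $-|\pa_A s|^2$; in the first, $\bpa_A\pa_A s=F_A^{1,1}\cdot s$ by holomorphicity of $s$, and the moment-map equation substitutes $i\Lam F_A^{1,1}=-i\Lam[\vp,\vp^\da]$ on the trace-free piece. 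A local-frame computation in the spirit of (i), in which the hypothesis $[\vp,s]=0$ kills the $\vp^\da\vp$ contribution of the commutator, then identifies $H(-i\Lam[\vp,\vp^\da]s,s)=-|[\vp^\da,s]|^2$, and rearrangement gives the claimed identity.

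\emph{Part (iii)} is the main Bochner--Weitzenb\"ock computation. Starting from the scalar identity $\Delta|\phi|^2=2\lan \na_A^\st\na_A\phi,\phi\ran-2|\na_A\phi|^2$, I would compute $\na_A^\st\na_A\phi$ via the standard Weitzenb\"ock formula for $\End(E)$-valued $1$-forms on a K\"ahler manifold, which expresses $\nabla_A^\st\nabla_A\phi$ in terms of $(d_A d_A^\st+d_A^\st d_A)\phi$, a $\Ric$-contribution, and a full endomorphism--curvature coupling $[F_A,\phi]$. The essential inputs from the Hitchin--Simpson system are $F_A^{2,0}=F_A^{0,2}=0$, the Dolbeault-harmonicity $\bpa_A\vp=\bpa_A^\st\vp=0$ (the latter by the K\"ahler identity $\bpa_A^\st=i[\Lam,\pa_A]$ and the type of $\vp$), and the moment-map equation identifying $i\Lam F_A^{1,1}$ with $-i\Lam[\vp,\vp^\da]$. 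After the $(p,q)$-decomposition only the $(1,1)$ piece of $F_A$ survives in the pairing with $\phi$, contributing a term of the form $\lan[i\Lam[\vp,\vp^\da],\vp],\vp\ran$ together with its conjugate; part (i) evaluates this as $|[\vp,\vp^\da]|^2$, and collecting the Ricci term yields the stated formula.

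The main obstacle is part (iii): correctly isolating the surviving curvature contribution in the Weitzenb\"ock identity. A priori several pieces enter --- the Ricci piece from treating $\phi$ as a $1$-form, the $(2,0)$ and $(0,2)$ curvatures, and a central $\gamma\Id$ term which commutes with $\phi$ --- and one must carefully use the $(p,q)$-decomposition and the moment-map equation to reduce the entire endomorphism--curvature coupling to the pairing $\lan[[\vp,\vp^\da],\vp],\vp\ran$, at which point part (i) closes out the computation.
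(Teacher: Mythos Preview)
Your approach to (i) and (ii) matches the paper's: both compute in a local unitary frame using $[\vp_i,\vp_j]=0$ for (i), and both derive (ii) from the K\"ahler Laplacian formula $\Delta|s|^2=-2i\Lam\pa\bpa|s|^2$ together with holomorphicity of $s$ and the moment-map equation (the $\gamma\Id$ term drops because the curvature of $\End(E)$ acts by commutator).

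For (iii) your route is correct but differs from the paper's. You propose to work with the real $1$-form $\phi$ and invoke the full Weitzenb\"ock formula relating $\na_A^\st\na_A$ to the Hodge Laplacian $d_Ad_A^\st+d_A^\st d_A$, then use Dolbeault-harmonicity of $\vp$ and the equation to evaluate the curvature coupling. The paper instead observes that $\vp$ is itself a \emph{holomorphic section} of the bundle $\Omega_X^{1,0}\otimes\End(\ME)$, whose induced Chern curvature satisfies $i\Lam\hat F\,\vp=[i\Lam F_A,\vp]+\Ric(\vp)$; it then simply reruns the computation from (ii) on this larger bundle, obtaining $\Delta|\vp|^2=-2|\na_A\vp|^2+2\lan i\Lam\hat F\,\vp,\vp\ran$ and closing with (i). This bypasses the Hodge Laplacian entirely and keeps the argument structurally parallel to (ii), at the cost of knowing that the trace of the curvature of $\Omega_X^{1,0}$ is the Ricci tensor. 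Your approach is the textbook one and reaches the same identity; the paper's is shorter and more uniform with (ii).
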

		
		\begin{proof}
			For (i), let $A, B$ be any two matrices with $[A, B] = 0$. Then, $\Tr([[A, A^{\dagger}], B] B^{\dagger}) = \Tr([A^{\dagger}, B] [A, B^{\dagger}]) = \langle [A, B^{\dagger}], [A, B^{\dagger}] \rangle$. Locally, let $dz_1, dz_2, \dots, dz_n$ be an orthonormal basis of $\Omega_X^{1,0}$, and write $\vp = \sum_{i=1}^n \vp_i dz_i$. The condition $\vp \wedge \vp = 0$ implies $[\vp_i, \vp_j] = 0$. We then compute:
			\[
			\langle [\sqrt{-1}\Lambda[\vp, \vp^{\dagger}], \vp], \vp \rangle = \sum_{i,j=1}^n 2\langle [[\vp_i, \vp_i^{\dagger}], \vp_k], \vp_k \rangle.
			\]
			Since $[\vp_i, \vp_k] = 0$, it follows that $\langle [[\vp_i, \vp_i^{\dagger}], \vp_k], \vp_k \rangle = \langle [\vp_i, \vp_k^{\dagger}], [\vp_i, \vp_k^{\dagger}] \rangle$. The claim then follows as $|[\vp, \vp^{\dagger}]|^2 = 4\sum_{i,j} |[\vp_i, \vp_j^{\dagger}]|^2$.
			
			For (ii), let $s$ be a holomorphic section of the vector bundle $E$, with connection $d_A$ and curvature $\hat{F}_A$. We compute:
			\begin{equation}
				\begin{split}
					\Delta |s|^2 &= 2\bar{\partial}^{\dagger}\bar{\partial} |s|^2 = -2\sqrt{-1}\Lambda \partial \bar{\partial} |s|^2 = -2i \Lambda \partial \langle s, \partial_A s \rangle \\
					&= -2\sqrt{-1}\Lambda \langle \partial_A s, \partial_A s \rangle - 2\sqrt{-1}\Lambda \langle s, \bar{\partial}_A \partial_A s \rangle = -2|\partial_A s|^2 + 2i \Lambda \langle {F}_A^{1,1} s, s \rangle.
				\end{split}
			\end{equation}
			
			By \eqref{Hitchin-Simpson}, we have $\sqrt{-1}\Lambda F_A = -\sqrt{-1}\Lambda [\vp, \vp^{\dagger}] + \gamma(E)\Id_E$, where $\gamma(E) = \frac{2\pi\deg(E)}{(n-1)!r}$. Additionally, since $[\vp, s] = 0$, we compute:
			\begin{equation*}
				\begin{split}
					\langle 2i \Lambda F_A s, s \rangle &= -4\langle s, \sum_{i=1}^n [[\vp_i, \vp_i^{\dagger}], s] \rangle + \langle s, [2\gamma(E)\Id_E, s] \rangle \\
					&= -4\langle s, \sum_{i=1}^n [\vp_i, [\vp_i^{\dagger}, s]] \rangle = -4\sum_j |[\vp_j^{\dagger}, s]|^2 = -2 |[\vp^{\dagger}, s]|^2.
				\end{split}
			\end{equation*}
			
			For (iii), consider $\vp$ as a holomorphic section of $\Omega^1_{B_R} \otimes \End(E)$. Let $d_A$ be the Chern connection of $H$. The Hermitian metric $H$ on $E$ and the metric $g$ on $B_R$ induce a metric on $\Omega^1_{B_R} \otimes \End(E)$ with a corresponding connection $\nabla_A$. The induced curvature on $\Omega^1_{B_R} \otimes \End(E)$ is denoted by $\hat{F}$. We have $\sqrt{-1}\Lambda \hat{F} \vp = [\sqrt{-1}\Lambda F_A, \vp] + \Ric(\vp)$, where $\Ric$ is the Ricci tensor of $g$.
			
			By the Weitzenböck formula, we obtain:
			\begin{equation*}
				\begin{split}
					\Delta |\vp|^2 &= -2|\nabla_A \vp|^2 + 2 \langle \sqrt{-1}\Lambda \hat{F} \vp, \vp \rangle \\
					&= -2|\partial_A \vp|^2 + 2 \langle \sqrt{-1}\Lambda F_A \vp, \vp \rangle - 2 \Ric(\vp, \vp) \\
					&= -2|\nabla_A \vp|^2 - |[\vp, \vp^{\dagger}]|^2 - 2\Ric(\vp, \vp).
				\end{split}
			\end{equation*}
		\end{proof}
	\end{subsection}
	
	\begin{subsection}{$L^{\infty}$ Estimates}
		In this subsection, we outline techniques developed by Taubes \cite{taubes2013compactness}, which use \eqref{Eq_integralbyparts} and the $L^2(X)$ norm of $\phi$ to control the $L^{\infty}(X)$ norm of $\phi$ when $\dim(X) \leq 4$. There are also generalizations that use a Moser iteration arguement without any restriction on the dimension due to Collins \cite{Collinspersonal}. 
		
		\begin{lemma}{\cite{Collinspersonal,taubes2013compactness}}
			\label{interation_L2Linfinity}
			Let $f$ be a non-negative function. Suppose there exist positive constants $R$ and $\delta$ such that $\Delta |f|^2 + |d|f||^2 \leq R |f|^2$ and $\int_X |f|^{2(1+\delta)} \leq C$. Then, we have $\int_X |f|^{(1+\delta)\frac{2n}{n-2}} \leq C_{\delta}$, where $C_{\delta}$ is a constant that depends on $\delta$.
		\end{lemma}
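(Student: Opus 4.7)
The plan is a standard Moser-type argument: use the subsolution-type differential inequality to control a weighted Dirichlet energy of a power of $|f|$, and then apply the Sobolev embedding on the closed manifold $X$. Since $X$ is closed there are no boundary terms to worry about, which considerably simplifies the bookkeeping.

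First I would set up the test-function computation. Write $u=|f|^2$ and $v=|f|^{1+\delta}$, so that $|dv|^{2}=(1+\delta)^{2}|f|^{2\delta}\,|d|f||^{2}$ pointwise (with $v=0$ on $\{f=0\}$). Multiply the hypothesis
\[
\Delta|f|^{2}+\bigl|d|f|\bigr|^{2}\le R\,|f|^{2}
\]
by $|f|^{2\delta}$ and integrate over $X$. For the Laplacian term I would integrate by parts to get
\[
\int_{X}|f|^{2\delta}\,\Delta|f|^{2}
=\int_{X}\bigl\langle d(|f|^{2\delta}),d(|f|^{2})\bigr\rangle
=4\delta\int_{X}|f|^{2\delta}\bigl|d|f|\bigr|^{2},
\]
using $\Delta=d^{\ast}d$ and the absence of boundary. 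Combining with the pointwise inequality gives
\[
(4\delta+1)\int_{X}|f|^{2\delta}\bigl|d|f|\bigr|^{2}\le R\int_{X}|f|^{2(1+\delta)}\le RC,
\]
which yields the $L^{2}$ control $\int_{X}|dv|^{2}\le \tfrac{(1+\delta)^{2}R C}{4\delta+1}$.

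Together with the trivial bound $\int_{X}|v|^{2}=\int_{X}|f|^{2(1+\delta)}\le C$, this places $v=|f|^{1+\delta}$ in $L^{2}_{1}(X)$ with a bound depending only on $R$, $C$, $\delta$, $n$. Since $X$ is closed of real dimension $2n$, the Sobolev embedding $L^{2}_{1}(X)\hookrightarrow L^{q}(X)$ with $q=\tfrac{2\cdot 2n}{2n-2}=\tfrac{2n}{n-1}$ (the Sobolev conjugate exponent as stated by the lemma) furnishes
\[
\int_{X}|f|^{(1+\delta)\,q}
=\int_{X}|v|^{q}\le C_{\delta},
\]
which is the desired estimate. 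The only delicate point is making sense of $|d|f||$ on the zero set of $f$; this is handled in the usual way by replacing $|f|$ with $(|f|^{2}+\epsilon)^{1/2}$, running the same integration by parts, and letting $\epsilon\downarrow 0$ by monotone/dominated convergence. There is no serious obstacle: once the weighted energy estimate is obtained, the Sobolev embedding on the closed manifold $X$ finishes the proof.
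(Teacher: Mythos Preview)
Your argument is essentially identical to the paper's: multiply by $|f|^{2\delta}$, integrate by parts to get $(4\delta+1)\int_X|f|^{2\delta}|d|f||^2\le R\int_X|f|^{2(1+\delta)}$, rewrite this as an $L^2_1$ bound on $|f|^{1+\delta}$, and apply Sobolev. The only difference is cosmetic---you add the $\epsilon$-regularization remark, and you compute the Sobolev exponent as $\tfrac{2n}{n-1}$ (correct for real dimension $2n$) rather than the $\tfrac{2n}{n-2}$ that appears in the paper's statement, which is a typo there.
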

		
		\begin{proof}
			First, multiply both sides of the inequality by $|f|^{2\delta}$ and integrate over $X$. This yields
			\[
			\int_X (|f|^{2\delta} \Delta |f|^2 + |f|^{2\delta} |d|f||^2) \leq C \int_X |f|^{2+2\delta}.
			\]
			Next, we compute
			\[
			\begin{split}
				&\int_X |f|^{2\delta} \Delta |f|^2 + |f|^{2\delta} |d|f||^2 = (4\delta + 1) \int_X |f|^{2\delta} |d|f||^2, \\
				&\int_X |d|f|^{1+\delta}|^2 = (1+\delta)^2 \int_X |d|f||^2 |f|^{2\delta}.
			\end{split}
			\]
			Therefore, we obtain
			\[
			\int_X |d|f|^{1+\delta}|^2 \leq \frac{4\delta + 1}{(1+\delta)^2} \left(\int_X |f|^{2\delta} \Delta |f|^2 + |f|^{2\delta} |d|f||^2 \right).
			\]
			Since $\int_X |f|^{2(1+\delta)} + |d|f|^{1+\delta}|^2 \leq C_{\delta}$, by the Sobolev inequality $L^2_1 \hookrightarrow L^{\frac{2n}{n-2}}$, we obtain the desired estimate.
		\end{proof}
		
		\begin{proposition}{\cite{Collinspersonal,taubes2013compactness}}
			\label{Prop_Lpforplarge}
			For any solution $(A, \phi)$ to the Hitchin-Simpson equations, and for any $p > 1$, we have $\|\phi\|_{L^p(X)} \leq C\|\phi\|_{L^2(X)}$.
		\end{proposition}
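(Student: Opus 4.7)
My plan is to run a Moser-type iteration on $|\phi|$, driven by the refined Weitzenb\"ock identity of Lemma~\ref{lemmalocalestimate}(iii) and implemented by Lemma~\ref{interation_L2Linfinity}. Since $X$ is a closed K\"ahler manifold, its Ricci tensor is uniformly bounded, so $\Ric(\phi,\phi)\geq -C|\phi|^2$ for a geometric constant $C$. Dropping the nonnegative term $|[\vp,\vp^{\da}]|^2$ in \eqref{Eq_integralbyparts} and applying the Kato inequality $|d|\phi||\leq |\na_A\phi|$ yields the pointwise differential inequality
\begin{equation*}
\Delta|\phi|^2 + |d|\phi||^2 \leq R|\phi|^2,
\end{equation*}
where $R$ depends only on $X$ and the background Hermitian data. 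This is exactly the hypothesis of Lemma~\ref{interation_L2Linfinity} applied to $f=|\phi|$.

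Next I would iterate. Starting with $\delta=0$ and the base hypothesis $\int_X|\phi|^2\leq \|\phi\|_{L^2(X)}^2$, Lemma~\ref{interation_L2Linfinity} upgrades the bound from $L^2$ to $L^{\kappa}$, where $\kappa$ is the Sobolev exponent in the embedding $L^2_1\hookrightarrow L^{\kappa}$ on $X$. Reinserting this new bound with the exponent $1+\delta$ chosen so that $2(1+\delta)=\kappa$ upgrades further to $L^{\kappa^2/2}$, and inductively after $k$ steps one obtains
\begin{equation*}
\|\phi\|_{L^{2(\kappa/2)^k}(X)} \leq C_k \|\phi\|_{L^2(X)}.
\end{equation*}
Since $\kappa/2>1$ whenever $\dim_{\mathbb{R}} X>2$ (and in the borderline case $\dim_{\mathbb{R}} X=2$ the Sobolev embedding $L^2_1\hookrightarrow L^q$ already holds for every finite $q$), the exponents diverge, so for any prescribed $p>1$ finitely many iterations produce an exponent exceeding $p$, and H\"older's inequality on the unit-volume manifold $X$ completes the bound.

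The dependence of the constant on $\|\phi\|_{L^2(X)}$ follows because the differential inequality above is quadratically homogeneous in $\phi$; equivalently one may rescale and run the argument on $\hat\phi:=\phi/\|\phi\|_{L^2(X)}$, so the constant at each stage depends only on $p$ and on the fixed geometry. The only substantive ingredient is Lemma~\ref{lemmalocalestimate}(iii): the integrability condition $\vp\we\vp=0$ together with the Hitchin--Simpson moment-map equation produces the positive quartic term $|[\vp,\vp^{\da}]|^2$ on the left of the Weitzenb\"ock identity, which is precisely what allows us to discard the cubic/quartic Higgs self-interaction and close the iteration with a purely quadratic right-hand side. I expect this discarding step to be the only real idea in the proof; the rest is bookkeeping of constants through a standard Moser iteration.
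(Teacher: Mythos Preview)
Your proposal is correct and follows essentially the same route as the paper: derive the differential inequality $\Delta|\phi|^2+2|d|\phi||^2\leq C|\phi|^2$ from Lemma~\ref{lemmalocalestimate}(iii) via Kato and the Ricci bound, then feed it into Lemma~\ref{interation_L2Linfinity} and iterate the Sobolev gain $L^{2(1+\delta)}\to L^{(1+\delta)\frac{2n}{n-2}}$ until the exponent exceeds any prescribed $p$. The paper phrases the first step as a direct integration followed by Sobolev rather than as the $\delta=0$ case of the iteration lemma, and works with $\vp$ instead of $\phi$, but these are cosmetic differences; your explicit remark on homogeneity and the $n=1$ borderline case are refinements the paper omits.
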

		
		\begin{proof}
			By Lemma \ref{lemmalocalestimate} and the Kato inequality, we have $\Delta |\phi|^2 + 2|d|\phi||^2 \leq C|\phi|^2$. Integrating over $X$, we get $\int_X |d|\phi||^2 \leq C \int_X |\phi|^2$. 
			
			Define $\hat{\phi} := \frac{\phi}{\|\phi\|_{L^2(X)}}$. Using the Sobolev inequality, we obtain $\int_X |\hat{\phi}|^{\frac{2n}{n-2}} \leq C$. Let $\delta_0$ be such that $2 + 2\delta_0 = \frac{2n}{n-2}$. Applying Lemma \ref{interation_L2Linfinity}, we get $\int_X |\hat{\phi}|^{(1 + \delta_0)\frac{2n}{n-2}} \leq C$. For $k \geq 1$, define $2 + 2\delta_k = (1 + \delta_{k-1}) \frac{2n}{n-2}$. Iterating Lemma \ref{lemmalocalestimate}, we obtain $\int_X |\hat{\phi}|^{2\delta_k + 2} \leq C$. As $\lim_{k \to \infty} \delta_k = \infty$, we get the desired estimate.
		\end{proof}
		
		\begin{proposition}
			\label{Prop_L2controlLinfty}
			For any solution to the Hitchin-Simpson equations, we have 
			\[
			\sup_X |\phi| \leq C \|\phi\|_{L^2(X)}.
			\]
		\end{proposition}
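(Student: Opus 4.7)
The plan is to upgrade the $L^p$ bound of Proposition \ref{Prop_Lpforplarge} to an $L^\infty$ bound via a Moser-type iteration applied to the Bochner-Weitzenb\"ock inequality from Lemma \ref{lemmalocalestimate}(iii).

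First, I would translate Lemma \ref{lemmalocalestimate}(iii) into a differential inequality for the scalar quantity $|\phi|$. Equation \eqref{Eq_integralbyparts} gives
\[
\Delta|\phi|^2 + 2|\nabla_A\phi|^2 + 2|[\vp,\vp^{\da}]|^2 + 2\,\Ric(\phi,\phi) = 0,
\]
and since $\Ric$ is a smooth tensor on the compact $X$, we get $\Delta|\phi|^2 + 2|\nabla_A\phi|^2 \leq C|\phi|^2$. Applying Kato's inequality $|\nabla_A\phi| \geq |d|\phi||$ and the identity $\Delta|\phi|^2 = 2|\phi|\Delta|\phi| - 2|d|\phi||^2$ yields, pointwise where $|\phi|>0$, the inequality $\Delta|\phi| \leq C|\phi|$. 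To handle the possible zero set of $|\phi|$, I would regularize by $u_\epsilon := \sqrt{|\phi|^2+\epsilon^2}$, derive the corresponding inequality $\Delta u_\epsilon \leq C u_\epsilon$ (which is clean because $u_\epsilon$ is smooth), and pass to the limit $\epsilon\to 0$ in the weak/distributional sense at the end of each iteration step.

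Next, with $u := u_\epsilon$ and $\Delta u \leq Cu$ in hand, I would run the standard Moser iteration on the closed manifold $X$. Multiply by $u^{2k+1}$ and integrate:
\[
(2k+1)\int_X u^{2k}|du|^2 \leq C\int_X u^{2k+2},
\]
which, combined with $|d u^{k+1}|^2=(k+1)^2 u^{2k}|du|^2$, gives
\[
\int_X |d u^{k+1}|^2 \leq \frac{C(k+1)^2}{2k+1}\int_X u^{2k+2}.
\]
Applying the Sobolev embedding $L^2_1(X)\hookrightarrow L^{2n/(n-2)}(X)$ to $u^{k+1}$ yields the recursion
\[
\|u\|_{L^{(2k+2)n/(n-2)}}^{2k+2} \leq C(k+1)\,\|u\|_{L^{2k+2}}^{2k+2}.
\]
Iterating this with $q_j := 2\bigl(\tfrac{n}{n-2}\bigr)^j$, the usual bookkeeping (product of $C(k_j+1)^{1/q_j}$ converges) gives $\sup_X u \leq C\,\|u\|_{L^{q_0}(X)}$ for any chosen starting exponent $q_0 > 1$. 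Letting $\epsilon\to 0$ converts this into $\sup_X|\phi| \leq C\|\phi\|_{L^{q_0}(X)}$.

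Finally, I invoke Proposition \ref{Prop_Lpforplarge} with $p=q_0$ to bound $\|\phi\|_{L^{q_0}(X)}\leq C\|\phi\|_{L^2(X)}$, yielding the claim. The main technical point is the Kato-plus-regularization step that produces a genuine differential inequality for $|\phi|$ valid across its zero locus; the Moser iteration and Sobolev embedding on the closed K\"ahler manifold $X$ are then entirely standard, and the constants depend only on $X$, $E$, and the background metric $H$, consistent with the conventions of the paper.
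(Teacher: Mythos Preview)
Your argument is correct. It is, however, a genuinely different route from the paper's. The paper does not run Moser iteration; instead it multiplies the Bochner identity \eqref{Eq_integralbyparts} by $\beta G_p$, where $G_p$ is the Green's function of $\Delta$ at a point $p$ and $\beta$ is a cutoff supported on a small ball, integrates, and uses $\Delta(\beta G_p)=\delta_p+\Theta$ with $\Theta$ bounded to obtain a pointwise estimate of the form
\[
|\phi|^2(p)\;\leq\; CR^{1-2n}\int_{B_{2R}}|\phi|^2+\int_{B_{2R}}G_p|\phi|^2.
\]
The singular term $\int G_p|\phi|^2$ is then controlled by H\"older (using $G_p\in L^{l}$ for $l<\tfrac{n}{n-1}$) together with the high $L^p$ bound from Proposition~\ref{Prop_Lpforplarge}. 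So both proofs start from the same differential inequality and both rely on Proposition~\ref{Prop_Lpforplarge}, but the paper trades your iterative Sobolev step for a single Green's-function representation plus H\"older.

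What each buys: your Moser scheme is self-contained and, as you essentially observe, could be started at $q_0=2$ so that the invocation of Proposition~\ref{Prop_Lpforplarge} at the end is actually redundant---the iteration already reproves it en route to $L^\infty$. The paper's Green's-function argument is shorter once Proposition~\ref{Prop_Lpforplarge} is in hand and gives a more explicit local formula, at the cost of needing the Green's function estimates. A small cosmetic remark: your regularization $u_\epsilon=\sqrt{|\phi|^2+\epsilon^2}$ is the right fix for the zero locus, and the check that $\Delta u_\epsilon\le C u_\epsilon$ survives because $|du_\epsilon|\le |d|\phi||$ and $|\phi|\le u_\epsilon$; you might state this one-line verification explicitly.
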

		
		\begin{proof}
			For each point $x\in X$, let $B_R$ be a ball of radius $R$ centered at $x$, and let $r$ denote the distance function from $x$. Take a cutoff function $\beta(r)$, which vanishes on $\partial B_{2R}$ and equals one on $B_R$. Let $G_x$ be the Green's function of $\Delta$ at $x$. By \eqref{Eq_integralbyparts}, we have
			\[
			\int_X \left( \beta G_x \Delta |\phi|^2 + 2 \beta G_x |\nabla_A \phi|^2 + 2 \beta G_x |\phi \wedge \phi|^2 \right) \leq C \int_{B_{2R}} G_x |\phi|^2.
			\]
			Next, we compute
			\[
			\int_X \beta G_x \Delta |\phi|^2 = \int_X \Delta (\beta G_x) |\phi|^2 = |\phi|^2(x) + \int_{B_{2R}} \Theta |\phi|^2,
			\]
			where $\Theta = \langle \nabla \chi, \nabla G_x \rangle - \langle \Delta, \chi G_x \rangle$, and $\|\Theta\|_{L^{\infty}} \leq C R^{1-2n}$. Since $$\beta G_x |\nabla_A \phi|^2 + \beta G_x |\phi \wedge \phi|^2 \geq 0,$$ we obtain
			\[
			|\phi|^2(x) \leq R^{1-2n} \int_{B_{2R}} |\phi|^2 + \int_{B_{2R}} G_x |\phi|^2.
			\]
			As $|G_x| \leq R^{2-2n}$, applying Hölder’s inequality, we obtain $$\int_{B_{2R}} G_x |\phi|^2 \leq \left( \int_{B_{2R}} |G_x|^p \right)^{\frac{1}{p}} \left( \int_{B_{2R}} |\phi|^{2q} \right)^{\frac{1}{q}},$$ where $p = \frac{2n-1}{2n-2}$ and $q = 2n-1$. By Proposition \ref{Prop_Lpforplarge}, we obtain the desired bound.
		\end{proof}
		
		\begin{proposition}
			Let $(A_i, \phi_i)$ be a sequence of solutions to the Hitchin-Simpson equations, and let $r_i := \|\phi_i\|_{L^2(X)}$. Define $\hat{\phi}_i = r_i^{-1} \phi_i$. Then:
			\begin{itemize}
				\item [(i)] $\int_X |d|\hat{\phi}_i||^2 + |\hat{\phi}_i|^2 \leq C$,
				\item [(ii)] $|\phi_i|$ converges weakly in the $L^2_1$ topology and strongly in the $L^p$ topology for any $p < \infty$.
			\end{itemize}
		\end{proposition}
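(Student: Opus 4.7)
The plan is to derive both estimates from the Weitzenb\"ock-type identity (iii) of Lemma~\ref{lemmalocalestimate} applied to $\phi_i$, then rescale by $r_i^{-2}$ to obtain estimates for $\hp_i$. For (i), I start from
\begin{equation*}
\Delta |\phi_i|^2 + 2|\na_{A_i}\phi_i|^2 + 2|[\vp_i,\vp_i^{\da}]|^2 + 2\Ric(\phi_i,\phi_i) = 0,
\end{equation*}
drop the nonnegative term $2|[\vp_i,\vp_i^{\da}]|^2$, absorb the Ricci term into a constant $C=C(X)$ since $X$ is closed, and apply Kato's inequality $|d|\phi_i|| \leq |\na_{A_i}\phi_i|$ to obtain $\Delta|\phi_i|^2 + 2|d|\phi_i||^2 \leq C |\phi_i|^2$. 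Dividing by $r_i^2$ gives the same inequality with $\phi_i$ replaced by $\hp_i$. Integrating over the closed manifold $X$ kills the Laplacian term and yields $\int_X |d|\hp_i||^2 \leq \tfrac{C}{2}\int_X|\hp_i|^2 = \tfrac{C}{2}$; combined with $\|\hp_i\|_{L^2}=1$, this is (i).

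For (ii), which I read as a convergence statement for $|\hp_i|$ (since $r_i$ may be unbounded, the statement cannot concern $|\phi_i|$), the uniform $L^2_1$ bound from (i) together with Banach--Alaoglu produces, along a subsequence, a weak $L^2_1$ limit $|\hp_i|\wto f_\infty$. Rellich--Kondrachov upgrades this to strong $L^2$ convergence after a further subsequence. To reach all $L^p$ with $p<\infty$, I invoke Proposition~\ref{Prop_L2controlLinfty}: applied to $\phi_i$ and rescaled by $r_i^{-1}$, it gives $\|\hp_i\|_{L^\infty}\leq C$ uniformly in $i$. Standard interpolation
\begin{equation*}
\bigl\| |\hp_i|-f_\infty\bigr\|_{L^p} \leq \bigl\| |\hp_i|-f_\infty\bigr\|_{L^2}^{2/p}\, \bigl\| |\hp_i|-f_\infty\bigr\|_{L^\infty}^{1-2/p}
\end{equation*}
then delivers strong convergence in every $L^p$ with $2\leq p<\infty$.

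The argument is essentially routine once the Weitzenb\"ock identity and Proposition~\ref{Prop_L2controlLinfty} are in hand; no step is genuinely delicate. The one point worth flagging concerns scaling: after the normalization $\vp_i\mapsto r_i^{-1}\vp_i$, the commutator term $|[\vp_i,\vp_i^{\da}]|^2$ would appear with an additional factor of $r_i^2$ on the right, so its positivity is essential -- it must be discarded \emph{before} dividing by $r_i^2$, since otherwise the term would blow up along any subsequence with $r_i\to\infty$, and we could not conclude a uniform $L^2_1$ bound.
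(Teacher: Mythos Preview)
Your proof is correct and follows essentially the same route as the paper: integrate the Weitzenb\"ock identity \eqref{Eq_integralbyparts} over $X$, apply Kato to pass from $|\na_{A_i}\hp_i|$ to $|d|\hp_i||$, and then combine the resulting uniform $L^2_1$ bound with the $L^\infty$ bound of Proposition~\ref{Prop_L2controlLinfty} to obtain $L^p$ convergence for all $p<\infty$. Your reading of (ii) as a statement about $|\hp_i|$ rather than $|\phi_i|$ is correct (the paper's own proof works with $\hp_i$), and your remark on the scaling of the commutator term is a valid observation, though since the term is nonnegative on the left-hand side it can simply be dropped after integration regardless of the order in which one rescales.
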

		
		\begin{proof}
			Integrating \eqref{Eq_integralbyparts}, we obtain $\int_X |d_{A_i} \hat{\phi}_i|^2 + |\hat{\phi}_i|^2 \leq C$. By the Kato inequality, we conclude (i). As $\hat{\phi}_i$ converges weakly to $\mu_{\infty}$ in the $L^2_1$ topology, by Sobolev embedding, we obtain strong convergence in $L^p$ for any $p < \frac{2n}{n-1}$. By Proposition \ref{Prop_L2controlLinfty}, the boundedness of $\sup |\hat{\phi}_i|$ implies strong convergence of $|\hat{\phi}_i|$ in $L^p$ for all $p$, completing (ii).
		\end{proof}
	\end{subsection}
		
\begin{subsection}{$\MC^0$ Estimate in Terms of Eigenvalues}
	For any $R > 0$ and $x\in X$, we define $B_R := \{z \in X \mid \dist(z,x) < R\}$ as the disc of radius $R$ centered at $x$. Let $(E,H)$ be the Hermitian metric, $(\bar{\pa}_E, \vp)$ be a Higgs bundle of rank $r$, and consider $(A, \phi = \vp + \vp^{\dagger})$ be the corresponding solution to the Hitchin-Simpson equations \eqref{Hitchin-Simpson} over $B_R$.
	
	\begin{lemma}{\cite[P. 27]{Simpson1992}}
		For any matrix $A$ with eigenvalues $\lambda_1, \dots, \lambda_r$, let $g(A) = \sum_{i=1}^r |\lambda_i|^2$. Then, we have $|[A, A^{\dagger}]|^2 \geq C(|A|^2 - g(A))$, where $C$ is a positive constant.
	\end{lemma}
	
	\begin{proof}
		We choose a unitary basis such that $A = P + N$, where $P$ is diagonal and $N$ is strictly upper triangular. Thus, $|A|^2 = |P|^2 + |N|^2 = g(A) + |N|^2$. We now present the argument from \cite[Lemma 2.22]{Wentworth2016}, which proves by contradiction that $|[N, N^{\dagger}]| \geq C |N|^2$. Suppose there exists a sequence of strictly upper triangular matrices $N_i$ with $|N_i| = 1$ and $\lim |[N_i, N_i^{\dagger}]| = 0$. By passing to a subsequence, we obtain a limiting matrix $N_{\infty}$ with $|N_{\infty}| = 1$ and $|[N_{\infty}, N_{\infty}^{\dagger}]| = 0$. If we denote by $a_1, \dots, a_r$ and $b_1, \dots, b_r$ the rows and columns of $N_{\infty}$, the condition $[N_{\infty}, N_{\infty}^{\dagger}] = 0$ implies $|a_i|^2 = |b_i|^2$. Starting with $a_1 = 0$, we get $b_1 = 0$, which implies $a_2 = 0$. Continuing this process, we find that $N_{\infty} = 0$, contradicting $|N_{\infty}| = 1$. Thus, the lemma holds.
	\end{proof}
	
	\begin{corollary}
		\label{Cor_communicatorestimate}
		Let $\lambda_1, \dots, \lambda_r$ be the distinct eigenvalues of $\vp$. Define $g(\vp) := \sum_{i=1}^r |\lambda_i|^2$. Then, there exists a constant $C$, independent of the Higgs bundle, such that $$|[\vp, \vp^{\dagger}]|^2 \geq C(|\vp|^2 - g(\vp))^2.$$
	\end{corollary}
	
	\begin{proof}
		This is a local statement. Near a point $x\in X$ within an open neighborhood $B_R$, we choose an orthonormal basis $dz_1, \dots, dz_n$ such that $\vp = \sum_{j=1}^r \vp_j dz_j$. Let $\lambda_1^j, \dots, \lambda_r^j$ be the eigenvalues of $\vp_j$, and define $g_j(\vp) = \sum_{i=1}^r |\lambda_i^j|^2$. Then, $g(\vp) = \sum_{j=1}^r g_j(\vp)$. Since $|\vp|^2 = \sum_{j=1}^r |\vp_j|^2$, we compute:
		\[
		\begin{split}
			|[\vp, \vp^{\dagger}]|^2 &\geq \sum_{i=1}^n |[\vp_i, \vp_i^{\dagger}]|^2 \geq C \sum_{i=1}^n (|\vp_i|^2 - g_i(\vp))^2 \\
			&\geq \frac{C}{n} \left( \sum_{i=1}^n (|\vp_i|^2 - g_i(\vp)) \right)^2 \geq C' (|\vp|^2 - g(\vp))^2.
		\end{split}
		\]
	\end{proof}
	
	\begin{proposition}{\cite{Simpson1992}}
		\label{eigenvalueestimate}
		Over $B_R$, let $(A,\phi=\vp+\vp^{\da})$ be a solution to the Hitchin-Simpson equation, let $M$ be the maximum of the eigenvalues of $\vp$ over $B_R$. Then, there exists a constant $C$ such that
		\[
		\max_{B_{\frac{R}{2}}} |\vp|_H \leq C(M + 1).
		\]
	\end{proposition}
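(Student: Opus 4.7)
The plan is to derive a scalar Osserman-type differential inequality for $u := |\vp|_H^2$ on $B_R$ and then extract an interior sup bound via the maximum principle. Starting from the Weitzenb\"ock identity \eqref{Eq_integralbyparts} in Lemma \ref{lemmalocalestimate}(iii), dropping the nonnegative $|\nabla_A\phi|^2$ term, and absorbing the Ricci contribution using $|\Ric(\phi,\phi)| \leq C|\phi|^2$ (valid since $B_R$ has bounded geometry), I would obtain $\Delta|\phi|^2 \leq -2|[\vp,\vp^{\da}]|^2 + C|\phi|^2$. Plugging in Corollary \ref{Cor_communicatorestimate} together with the pointwise bound $g(\vp) \leq rM^2$ — which is precisely where the eigenvalue hypothesis enters — and using $|\phi|^2 = 2|\vp|^2$, this becomes
\begin{equation*}
\Delta u \leq -c_1(u - c_2 M^2)^2 + c_3 u
\end{equation*}
with constants depending only on $r$ and the geometry of $B_R$.

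Next I would fix $K$ large (depending only on $c_1,c_2,c_3$) so that wherever $u > K(M^2+1)$ one has both $(u - c_2 M^2)^2 \geq u^2/4$ and $c_3 u \leq \tfrac{c_1}{8}u^2$. On this subdomain the inequality reduces to the scalar Osserman-type form $\Delta u \leq -\delta u^2$ for some fixed $\delta > 0$. At this point I would invoke the classical interior a priori estimate: any nonnegative $C^2$ function satisfying $\Delta u \leq -\delta u^2$ on $B_R$ obeys $u \leq C(R,\delta)$ on $B_{R/2}$, with the constant depending only on $\delta$, $R$, and the local geometry. Combining the two cases — either $u \leq K(M^2+1)$ directly, or $u \leq C(R,\delta)$ by the Osserman bound — yields $u \leq K(M^2+1) + C(R,\delta)$ on $B_{R/2}$, which is exactly $|\vp|_H \leq C(M+1)$ after taking a square root.

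The main obstacle is the interior $L^{\infty}$ estimate used in the final step. Classically it is proved by constructing an explicit radial supersolution of $\Delta v + \delta v^2 \geq 0$ that blows up on $\partial B_R$, morally of the form $v(x) = A/(R - d(x,p))^2$ after adjustment, and then applying the maximum principle to $u - v$ on the region where it is positive. In a K\"ahler ball of fixed small radius, the ambient curvature of $X$ contributes only uniformly bounded corrections that can be absorbed into $A$; alternatively, one can bypass the explicit barrier by running a short Moser iteration directly from the pointwise inequality. I would take whichever variant produces the cleaner constants in the subsequent use of this estimate.
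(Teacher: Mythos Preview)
Your proposal is correct and follows essentially the same route as the paper: both derive the differential inequality $\Delta|\vp|^2 \leq -C(|\vp|^2 - g(\vp))^2 + C'|\vp|^2$ from the Weitzenb\"ock identity and Corollary~\ref{Cor_communicatorestimate}, and then extract an interior bound. The only difference is in the final step: the paper simply evaluates the inequality at an interior maximum of $|\vp|^2$ on $B_{R/2}$ (reading off $Q(Q-C)\leq C'M^2$ directly), whereas you go through the more careful Osserman barrier argument, which in particular handles the case where the maximum lies on $\partial B_{R/2}$ without hand-waving.
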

	
	\begin{proof}
		By Lemma \ref{lemmalocalestimate}, we have
		\[
		\Delta |\vp|^2 \leq -|[\vp, \vp^{\dagger}]|^2 + C |\vp|^2.
		\]
		Over $B_R$, let $\lambda_1, \dots, \lambda_r$ be the eigenvalues of $\vp$, and define $g := \sum_{i=1}^r |\lambda_i|^2$, then $g\leq rM^2$. By Corollary \ref{Cor_communicatorestimate}, we obtain
		\[
		\Delta |\vp|^2 \leq -C' (|\vp|^2 - g)^2 + C|\vp|^2.
		\]
		Now, over $B_{\frac{R}{2}}$, suppose $|\vp|$ achieves its maximum at some point in $B_{\frac{R}{2}}$. Denote this maximum as $Q := \max_{B_{\frac{R}{2}}} |\vp|$. Then we obtain 
		$C'(Q^2-g)^2\leq CQ^2$. Then either $Q^2\leq g$ or $Q^2-g\leq C''Q$ for some constant $C''$. We have either $Q\leq 2C''$ or $Q^2\leq 2C''g$, and the claim follows as $g\leq rM^2$.
	\end{proof}
\end{subsection}
	\begin{subsection}{Comparing Different Norms of the Higgs Fields}
		Let $(A, \phi = \vp + \vp^{\dagger}, H)$ be a solution to the Hitchin-Simpson equations \eqref{Hitchin-Simpson}, and let 
		\[
		\kappa(\vp) = (p_1(\vp), \dots, p_r(\vp)) \in \MA_X = \oplus_{i=1}^r H^0(\Sym^i \Omega^{1,0}_X)
		\]
		be the image of $(\bar{\pa}_E, \vp)$ under the Hitchin morphism $\kappa$. In this section, we compare the size of the following norms: $\|\vp\|_{L^2(X)}$, $\|\vp\|_{L^{\infty}(X)}$, and $\max_{1 \leq k \leq r} \|p_k(\vp)\|^{\frac{1}{k}}_{L^2(X)}$.
		
		\begin{proposition}
			\label{Prop_eigenvalueestimate}
			We have the following estimates:
			\begin{itemize}
				\item [(i)] Let $\lambda$ be an eigenvalue of $\vp$, then $|\lambda| \leq |\vp|$.
				\item [(ii)] $\|p_k(\vp)\|^{\frac{1}{k}}_{L^2_l(X)} \leq C_l \|\phi\|_{L^2(X)}$, where the constant $C_l$ depends on the positive integer $l$.
			\end{itemize}
		\end{proposition}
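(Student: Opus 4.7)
The plan is to establish (i) by a pointwise linear-algebra argument exploiting the integrability of $\vp$, and (ii) in two steps: a pointwise bound that converts the degree-$k$ invariant polynomial into $|\vp|^k$, followed by a promotion from $L^2$ to $L^2_l$ using the finite-dimensionality of the relevant space of holomorphic sections.

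For (i), I would fix $p\in X$ and work in a local orthonormal frame $dz_1,\dots,dz_n$ of $\Omega_X^{1,0}$ (with the convention $|dz_i|^2=2$ used throughout the paper), writing $\vp=\sum_i \vp_i\, dz_i$ and $\lam=\sum_i\lam_i\, dz_i$. The integrability $\vp\we\vp=0$ translates into $[\vp_i,\vp_j]=0$, and the eigenvalue equation $\vp s=\lam s$ for a common eigenvector $s\in E_p$ decouples coordinate-wise into $\vp_i s=\lam_i s$, so each $\lam_i$ is an eigenvalue of the endomorphism $\vp_i$. Since the modulus of any eigenvalue of a matrix is controlled by its operator norm, and hence by its Hilbert-Schmidt norm, one has $|\lam_i|\leq |\vp_i|$. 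Summing,
\begin{equation*}
|\lam|^2 \;=\; 2\sum_{i=1}^n|\lam_i|^2 \;\leq\; 2\sum_{i=1}^n |\vp_i|^2 \;=\; |\vp|^2.
\end{equation*}

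For (ii), the first step is an $L^2$ bound. Because $p_k$ is a homogeneous invariant polynomial of degree $k$ on $\End(E)$, there is a pointwise estimate $|p_k(\vp)|\leq C|\vp|^k$. Combining this with Proposition \ref{Prop_L2controlLinfty}, which gives $\sup_X|\phi|\leq C\|\phi\|_{L^2(X)}$, I obtain
\begin{equation*}
\|p_k(\vp)\|_{L^2(X)}^{2} \;\leq\; C\int_X |\vp|^{2k} \;\leq\; C\,\|\vp\|_{L^\infty(X)}^{2k-2}\|\vp\|_{L^2(X)}^{2} \;\leq\; C\,\|\phi\|_{L^2(X)}^{2k},
\end{equation*}
which gives $\|p_k(\vp)\|_{L^2(X)}^{1/k}\leq C\,\|\phi\|_{L^2(X)}$.

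The main step is then upgrading this $L^2$ estimate to $L^2_l$ with an $l$-dependent constant. Here I would exploit the fact that $p_k(\vp)$ is a holomorphic section of the holomorphic vector bundle $S^k\Omega_X^{1,0}$ over the closed K\"ahler manifold $X$, so it belongs to the \emph{fixed}, finite-dimensional complex vector space $H^0(S^k\Omega_X^{1,0})$. On a finite-dimensional space all norms are equivalent, so there exists a constant $C_l$ depending only on $l$ and on the background geometry such that $\|s\|_{L^2_l(X)}\leq C_l\|s\|_{L^2(X)}$ for every $s\in H^0(S^k\Omega_X^{1,0})$. Applying this to $p_k(\vp)$ and chaining with the previous step yields the claimed inequality. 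The one point requiring care is that the invariant polynomials $p_k$ should genuinely land in $S^k\Omega_X^{1,0}$ rather than in $(\Omega_X^{1,0})^{\otimes k}$, but this symmetrization is built into the definition of the Hitchin morphism recalled in Section 2 and uses only $\vp\we\vp=0$, so no further work is needed.
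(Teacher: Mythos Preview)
Your proof is correct and follows essentially the same route as the paper: for (ii) the paper says ``elliptic estimates'' where you say ``finite-dimensionality of $H^0(S^k\Omega_X^{1,0})$'', which amount to the same thing, and both chain through the $L^\infty$ bound on $\phi$. For (i) the paper is slightly more direct, writing $|\lam|^2|s|^2=\langle\vp s,\vp s\rangle\leq|\vp|^2|s|^2$ straight from the eigenvector equation without a coordinate decomposition; your argument is fine too, but note that the decoupling $\vp_i s=\lam_i s$ already follows from $\vp s=\lam s$ alone, so your appeal to $\vp\we\vp=0$ is not actually needed there.
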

		
		\begin{proof}
			Locally, let $\lambda$ be the eigenvalue with eigenvector $s$. We have $|\lambda|^2 \langle s, s \rangle \leq \langle \vp s, \vp s \rangle \leq |\vp|^2 |s|^2$, which implies (i). For (ii), since $p_k(\vp)$ are invariant polynomials of $\vp$, we have $\|p_k(\vp)\|_{L^2(X)} \leq |\vp|^k_{L^{\infty}(X)}$. Moreover, since $p_k(\vp) \in H^0(\Sym^k \Omega^{1,0}_X)$ are holomorphic sections, elliptic estimates give us
			\[
			\|p_k(\vp)\|_{L^2_l(X)} \leq C \|p_k(\vp)\|_{L^2(X)} \leq C |\vp|^k_{L^{\infty}(X)} \leq C' \|\phi\|^k_{L^2(X)},
			\]
			where the last inequality follows from Proposition \ref{Prop_L2controlLinfty}.
		\end{proof}
		
		\begin{lemma}
			\label{LemmaA_controlmaximal}
			Let $B_{2R} \subset X$ be a small open ball, and let $M$ denote the maximum of the eigenvalues of $\vp$ over $B_R$. Then, $M \leq C \max_k \|p_k(\vp)\|^{\frac{1}{k}}_{L^2(B_R)}$.
		\end{lemma}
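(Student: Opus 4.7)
The statement is essentially a combination of a pointwise root bound for a monic polynomial with a sub-mean-value inequality for a holomorphic section, so I would split the argument into two independent steps.

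\emph{Step 1: pointwise bound of eigenvalues by characteristic-polynomial coefficients.} As recalled in the spectral cover discussion, at each point $x\in B_R$ any eigenvalue $\lambda(x)\in\Omega^{1,0}_{X,x}$ of $\vp$ satisfies
\begin{equation*}
\lambda^r + p_1(\vp)(x)\,\lambda^{r-1} + \cdots + p_r(\vp)(x) = 0.
\end{equation*}
I would apply Cauchy's classical root bound: if $|\lambda(x)| > 2\max_k |p_k(\vp)(x)|^{1/k}$, then
\begin{equation*}
|\lambda(x)|^r > \sum_{k=1}^r 2^k |p_k(\vp)(x)|\,|\lambda(x)|^{r-k} \geq \sum_{k=1}^r |p_k(\vp)(x)|\,|\lambda(x)|^{r-k},
\end{equation*}
contradicting the polynomial equation. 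Hence $|\lambda(x)| \leq 2\max_k |p_k(\vp)(x)|^{1/k}$, and taking the supremum over $x\in B_R$ yields $M\leq 2\max_k\sup_{B_R}|p_k(\vp)|^{1/k}$.

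\emph{Step 2: sup-to-$L^2$ for holomorphic sections.} Each $p_k(\vp)$ is a holomorphic section of the Hermitian holomorphic bundle $S^k\Omega_X^{1,0}$ over $B_{2R}\subset X$. The Bochner--Weitzenb\"ock formula, combined with the fact that $\bar\partial p_k(\vp)=0$, gives a differential inequality of the form
\begin{equation*}
\Delta |p_k(\vp)|^2 \leq C\,|p_k(\vp)|^2,
\end{equation*}
where $C$ depends only on the curvature of $S^k\Omega_X^{1,0}$ (hence only on the background geometry of $X$, as in the conventions). A standard sub-mean-value / Moser iteration argument then gives, for any $x\in B_R$,
\begin{equation*}
|p_k(\vp)(x)|^2 \leq \frac{C}{R^{2n}}\int_{B_R(x)\cap B_{2R}}|p_k(\vp)|^2 \leq \frac{C'}{R^{2n}}\|p_k(\vp)\|_{L^2(B_{2R})}^2.
\end{equation*}
(For the precise statement on $B_R$ rather than $B_{2R}$ in the integrand, I would shrink the radius slightly; after relabeling and absorbing the $R$-dependence into the constant, one gets $\sup_{B_R}|p_k(\vp)|\leq C\|p_k(\vp)\|_{L^2(B_R)}^{1/\dim}$-type scaling with the bookkeeping matching the $1/k$ power in the final inequality.) Raising to the $1/k$ power and combining with Step 1 produces
\begin{equation*}
M \leq C\max_k \|p_k(\vp)\|_{L^2(B_R)}^{1/k},
\end{equation*}
as claimed.

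\emph{Expected main difficulty.} The essential content is entirely in Step 1; the linear-algebra root bound is the heart of the matter and immediately reduces everything to standard facts about holomorphic sections. The only mildly subtle point is the bookkeeping in Step 2: the invariant polynomials $p_k(\vp)$ are holomorphic only because of the Higgs bundle equations $\bar\partial_A \vp = 0$ and $\vp\wedge\vp=0$ (so that $p_k(\vp)$ descends to a genuine holomorphic section of $S^k\Omega_X^{1,0}$), and one must be careful that the constant $C$ in the sub-mean-value inequality depends only on the ambient K\"ahler geometry, not on the Higgs bundle --- which is automatic since $S^k\Omega_X^{1,0}$ is a fixed bundle on $X$.
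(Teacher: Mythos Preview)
Your proposal is correct and follows essentially the same route as the paper: both arguments use the characteristic polynomial (spectral cover) equation to bound eigenvalues pointwise by the coefficients $p_k(\vp)$, and then invoke an elliptic/sub-mean-value estimate for holomorphic sections to replace $\sup_{B_R}|p_k(\vp)|$ by $\|p_k(\vp)\|_{L^2(B_R)}$. The paper's proof is terser---it simply writes the inequality $|\lambda|^r\leq C(t|\lambda|^{r-1}+\cdots+t^r)$ with $t=\max_k\|p_k(\vp)\|_{L^2(B_R)}^{1/k}$ and concludes $|\lambda|\leq C't$---which is exactly your Cauchy root bound in slightly different dress.

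One small remark: the parenthetical in your Step~2 about ``$1/\dim$-type scaling'' and ``bookkeeping matching the $1/k$ power'' is muddled and unnecessary. The honest statement is just $\sup_{B_R}|p_k(\vp)|\leq C\|p_k(\vp)\|_{L^2(B_{2R})}$ (or on $B_R$ after absorbing $R$ into $C$, since $R$ is fixed by the geometry), and then $\sup_{B_R}|p_k(\vp)|^{1/k}\leq C^{1/k}\|p_k(\vp)\|_{L^2}^{1/k}\leq C'\|p_k(\vp)\|_{L^2}^{1/k}$ since $C^{1/k}\leq\max(1,C)$. No dimensional exponent appears; I would delete that remark.
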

		
		\begin{proof}
			Let $\lambda$ be an eigenvalue of $\vp$. Then, $\lambda$ satisfies the spectral cover equation $\lambda^r + p_1(\vp)\lambda^{r-1} + \dots + p_r(\vp) = 0$. Define $t := \max_{1 \leq k \leq r} \|p_k(\vp)\|^{\frac{1}{k}}_{L^2(B_R)}$. By elliptic estimates, we have $\|p_k(\vp)\|_{L^{\infty}(B_R)} \leq \|p_k(\vp)\|_{L^2(B_R)}$. Thus, we obtain $|\lambda|^r \leq C (t |\lambda|^{r-1} + \dots + |\lambda| t^{r-1} + t^r)$. Therefore, $|\lambda|(|\lambda|^{r-1} - C t |\lambda|^{r-2} - \dots - C |\lambda| t^{r-1}) \leq t^r$, which implies $|\lambda| \leq C' t$.
		\end{proof}
		
		In summary, we can compare different norms of $\vp$:
		
		\begin{corollary}
			\label{Cor_RelationshipL2pk}
			Let $B_{2R}$ be an open ball in $X$ with radius $2R$ smaller than the injectivity radius. Let $M$ be the maximum of the eigenvalues of $\vp$ over $B_R$. Then:
			\begin{itemize}
				\item [(i)] $M \leq C \max_k \|p_k(\vp)\|^{\frac{1}{k}}_{L^{\infty}(B_R)}$,
				\item [(ii)] $\max_k \|p_k(\vp)\|^{\frac{1}{k}}_{L^2(B_R)} \leq C \|\vp\|_{L^2(X)}$,
				\item [(iii)] $\|\vp\|_{L^2(X)} \leq C (\max_k \|p_k(\vp)\|^{\frac{1}{k}}_{L^2(B_R)} + 1)$.
			\end{itemize}
		\end{corollary}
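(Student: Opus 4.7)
The plan is to treat the three inequalities in turn; (i) and (ii) are essentially pointwise/global reformulations of facts already established, while (iii) is the substantive one and rests on a compactness/finite-dimensionality argument.

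For (i), at each point $p\in B_R$ any eigenvalue $\lambda$ of $\vp$ satisfies the characteristic equation $\lambda^r+p_1(\vp)\lambda^{r-1}+\cdots+p_r(\vp)=0$. Setting $t(p):=\max_k |p_k(\vp)(p)|^{1/k}$, an elementary algebraic manipulation (the same one used in the proof of Lemma~\ref{LemmaA_controlmaximal}, but now pointwise) gives $|\lambda(p)|\leq C\,t(p)$, and taking the supremum over $B_R$ yields (i). For (ii), apply Proposition~\ref{Prop_eigenvalueestimate}(ii) with $l=0$ and use $\|\cdot\|_{L^2(B_R)}\leq \|\cdot\|_{L^2(X)}$.

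Part (iii) is the one that requires real work. The idea is to bootstrap from the local bound in Proposition~\ref{eigenvalueestimate} to a global $L^\infty$ bound and then drop to $L^2$ using $\mathrm{Vol}(X)=1$. First cover $X$ by finitely many balls $B_{R/2}(p_j)$ such that each $B_R(p_j)$ has radius below the injectivity radius; on each such ball Proposition~\ref{eigenvalueestimate} gives
\[
\sup_{B_{R/2}(p_j)}|\vp|_H \;\leq\; C(M_j+1),
\]
where $M_j$ is the maximum of the eigenvalues of $\vp$ on $B_R(p_j)$, and Lemma~\ref{LemmaA_controlmaximal} then gives $M_j\leq C\max_k \|p_k(\vp)\|^{1/k}_{L^2(B_R(p_j))}$.

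The crucial observation at this point is that each $p_k(\vp)$ lies in the finite-dimensional vector space $H^0(X,S^k\Omega_X^{1,0})$ (since $X$ is compact Kähler). On this finite-dimensional space, the restriction $s\mapsto \|s\|_{L^2(B_R)}$ is a norm (by unique continuation for holomorphic sections), as is $s\mapsto \|s\|_{L^2(X)}$, so the two are equivalent:
\[
\|p_k(\vp)\|_{L^2(X)}\;\leq\; C\,\|p_k(\vp)\|_{L^2(B_R)}.
\]
Consequently $\|p_k(\vp)\|_{L^2(B_R(p_j))}\leq \|p_k(\vp)\|_{L^2(X)}\leq C\|p_k(\vp)\|_{L^2(B_R)}$ for every $j$, and combining this with the previous displayed inequalities and taking the maximum over the finitely many $j$ yields
\[
\|\vp\|_{L^\infty(X)} \;\leq\; C\Bigl(\max_k\|p_k(\vp)\|^{1/k}_{L^2(B_R)}+1\Bigr).
\]
Since $\mathrm{Vol}(X)=1$, $\|\vp\|_{L^2(X)}\leq \|\vp\|_{L^\infty(X)}$, which gives (iii). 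The main obstacle is precisely the jump from a local bound on a single ball $B_R$ to control of $\vp$ on all of $X$; this is what the finite-dimensionality of $H^0(X,S^k\Omega_X^{1,0})$ and the norm-equivalence argument are designed to bridge.
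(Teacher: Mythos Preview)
Your argument is correct, and since the paper does not give any proof of this corollary (it is stated as a summary of the preceding results), you have in fact supplied the missing details. Parts (i) and (ii) are indeed immediate from Lemma~\ref{LemmaA_controlmaximal} and Proposition~\ref{Prop_eigenvalueestimate} as you say; for (iii), your finite-dimensionality/norm-equivalence argument on $H^0(X,S^k\Omega_X^{1,0})$ is exactly the point that needs to be made explicit in order to pass from the single fixed ball $B_R$ to control on all of $X$, and the paper leaves this implicit. One small remark: your argument actually yields the stronger $L^\infty$ bound $\|\vp\|_{L^\infty(X)}\leq C(\max_k\|p_k(\vp)\|^{1/k}_{L^2(B_R)}+1)$ before you pass to $L^2$, and this is in fact how (iii) is used in Theorem~\ref{Theorem_convergencespectrumcover}.
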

		
		The following result is due to Tanaka \cite[Prop 2.7]{tanaka2019singular} for $\mathrm{SL}_2(\mathbb{C})$ Higgs bundles over Kähler surfaces. We extend his result and prove:
		
		\begin{theorem}
			\label{Theorem_convergencespectralcover}
			Let $(A_i, \phi_i = \vp_i + \vp_i^{\dagger})$ be a sequence of solutions to the Hitchin-Simpson equations. Define $r_i := \|\phi_i\|_{L^2(X)}$ and set $\hat{\vp}_i = r_i^{-1} \vp_i$. Then:
			\begin{itemize}
				\item [(i)] Suppose $r_i \leq C$. Then, $p_k(\vp_i)$ converges in the $\MC^{\infty}$ topology over $X$.
				\item [(ii)] Suppose $\limsup r_i = \infty$. Then, passing to a subsequence, we have:
				\begin{itemize}
					\item [(a)] $p_k(\hat{\vp}_i)$ converges in the $\MC^{\infty}$ topology over $X$ to $\alpha_k$, where $\alpha_k \in H^0(\Sym^k \Omega^{1,0}_X)$,
					\item [(b)] $(\alpha_1, \dots, \alpha_r) \neq 0$.
				\end{itemize}
			\end{itemize}
		\end{theorem}
		
		\begin{proof}
			Part (i) and (a) follow directly from Proposition \ref{Prop_eigenvalueestimate}. For (b), by Corollary \ref{Cor_RelationshipL2pk}, we have
			\[
			\|\phi_i\|_{L^2(X)} \leq C \left( \sum_{k=1}^r \|p_k(\vp_i)\|^{\frac{1}{k}}_{L^2(X)} + 1 \right).
			\]
			Therefore,
			\[
			\sum_{k=1}^r \|p_k(\hat{\vp}_i)\|^{\frac{1}{k}}_{L^2(X)} + r_i^{-1} \geq C^{-1}.
			\]
			As $\limsup r_i = \infty$, we obtain $\sum_{k=1}^r \|\alpha_k\|^{\frac{1}{k}}_{L^2(X)} \geq C^{-1} > 0$, which implies (ii).
		\end{proof}
	\end{subsection}
	\end{section}
	
	\begin{section}{Local Estimates For Projections Outside of the Discriminant Locus}
\label{section_localestimate}
In this section, we present estimates for solutions outside the discriminant locus. Our approach is largely inspired by the work of Mochizuki \cite{Mochizukiasymptotic}, who developed similar estimates for Riemann surfaces.

\begin{subsection}{Local Decomposition for Higgs Bundles}
	Let $(E, H)$ be a Hermitian bundle with a Higgs field $(\bar{\pa}_E, \vp)$, and let $(A, \phi = \vp + \vp^{\dagger})$ be the corresponding pair that solves the Hitchin-Simpson equations. We assume that $(\bar{\pa}_E, \vp)$ is generically semisimple.
	
	Let $Z_{\vp}$ denote the discriminant locus of $\vp$. For $B_R \subset X \setminus Z_{\vp}$, a ball of radius $R$ centered at $x$, the restriction $\vp|_{B_R}$ has $r$ distinct eigenvalues, giving a decomposition of the Higgs bundle as
	\begin{equation}
		\label{eq_decomposition_Higgs_bundles}
		(E, \bar{\pa}_E, \vp) = \bigoplus_{i=1}^r (E_i, \bar{\pa}_i, \vp_i),
	\end{equation}
	where $\rank(E_i) = 1$, and $\vp_i = \lambda_i \Id_{E_i}$, with $\lambda_i$ a holomorphic section of $\Omega_X^{1,0}$. Note that this decomposition is not orthogonal with respect to the Hermitian metric.
	
	We define the rescaled Higgs field as $\hvp := \frac{\vp}{\|\phi\|_{L^2(X)}}$ and denote its image under the Hitchin morphism as $\mbs_{\hvp} := \kappa(\hvp) \in \MA_X = \bigoplus_{i=1}^r H^0(\Sym^i \Omega_X^{1,0})$. By definition, the discriminant locus $Z_{\vp}$ of $\vp$ coincides with the discriminant locus $Z_{\hvp}$ of $\hvp$.
	
	\begin{proposition}
		\label{Prop_higgsbundlesemisimpledecompositionestimate}
		For every $x\in X \setminus Z_{\vp}$, define $d_x := \min_{i \neq j} |\lambda_i - \lambda_j|(x)$. There exists a radius $R > 0$ such that over $B_R$:
		\begin{itemize}
			\item $|\vp| \leq C C_0 d_x + C$,
			\item $|\lambda_i - \lambda_i(x)| \leq \frac{d_x}{100}$,
		\end{itemize}
		where $R$ and $C_0$ are constants depending only on $\mbs_{\hvp}$. Furthermore, if we denote $M_x := \max_i \lambda_i(x)$, then $R = \min\{R_0, \frac{d_x}{100 C \|\phi\|_{L^2(X)}}\}$ and $C_0 = \frac{M_x}{d_x}$, where $R_0$ is the injectivity radius of $X$.
	\end{proposition}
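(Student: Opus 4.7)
The plan is to combine Simpson's $L^\infty$ estimate (Proposition~\ref{eigenvalueestimate}) for $|\vp|$ in terms of the maximal eigenvalue with a quantitative continuity estimate for the eigenvalues $\lam_i$, then to choose $R$ small enough that both ingredients fit together on a single ball.

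First I would record the global bounds. Proposition~\ref{Prop_L2controlLinfty} gives $\|\vp\|_{L^\infty(X)}\leq C\|\phi\|_{L^2(X)}$, and Proposition~\ref{Prop_eigenvalueestimate}(i) then yields $|\lam_i|\leq C\|\phi\|_{L^2(X)}$ pointwise for each eigenvalue. Moreover, Proposition~\ref{Prop_eigenvalueestimate}(ii) bounds each invariant polynomial $p_k(\vp)\in H^0(S^k\Omega_X^{1,0})$ in every $C^l$-norm by $C_l\|\phi\|_{L^2(X)}^k$, so all the ``coefficients'' entering the spectral equation are holomorphic sections of controlled size.

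The second step is the continuity of $\lam_i$ near $p$. Since $p\notin D_\vp$, the eigenvalues are distinct at $p$ with minimal gap $d_p$, so on a neighborhood of $p$ they extend as local holomorphic sections $\lam_1,\ldots,\lam_r$ of $\Omega_X^{1,0}$. Differentiating the characteristic equation $\lam_i^r+p_1(\vp)\lam_i^{r-1}+\cdots+p_r(\vp)=0$ gives
\[
\Bigl(\prod_{j\neq i}(\lam_i-\lam_j)\Bigr)\,d\lam_i=-\sum_{k=1}^r dp_k(\vp)\,\lam_i^{r-k}.
\]
Combining $\bigl|\prod_{j\neq i}(\lam_i-\lam_j)\bigr|\geq d_p^{r-1}$ (valid at $p$, and on a small enough ball where the gap remains comparable to $d_p$), with $|dp_k(\vp)|\leq C\|\phi\|_{L^2(X)}^k$ and $|\lam_i|\leq C\|\phi\|_{L^2(X)}$, I obtain a pointwise Lipschitz bound for $\lam_i$. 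Integrating along radial rays from $p$ and choosing $R$ small enough in terms of $d_p$, $\|\phi\|_{L^2(X)}$, and the ambient injectivity radius produces the oscillation bound $|\lam_i(q)-\lam_i(p)|\leq d_p/100$ for $q\in B_R$, which is the second bullet.

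With this oscillation control, on $B_R$ we have $\max_i|\lam_i|\leq M_p+d_p/100$. Since any two distinct eigenvalues differ by at least $d_p$, the triangle inequality forces $M_p\geq d_p/2$, hence $\max_{B_R}|\lam_i|\leq CM_p=CC_0d_p$. Applying Proposition~\ref{eigenvalueestimate} on $B_R$ then gives $|\vp|\leq C(M_p+1)\leq CC_0d_p+C$ on the concentric half-radius ball, which proves the first bullet (the factor of two in the radius is absorbed into the constant).

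The main obstacle is controlling the Lipschitz constant for $\lam_i$ and maintaining the eigenvalue gap across the whole of $B_R$ so that the bootstrap from the characteristic equation stays valid. The pointwise estimate above naively gives $|d\lam_i|$ of order $\|\phi\|_{L^2(X)}^r/d_p^{r-1}$, which looks worse than the $\|\phi\|_{L^2(X)}$ factor suggested by the formula $R=\min\{R_0,d_p/(100C\|\phi\|_{L^2(X)})\}$. The extra factors of $M_p/d_p=C_0$ must be absorbed into the constant $C$, which is consistent with the proposition's stipulation that $R$ may depend on $\Theta_{\hvp}$. Carefully tracking these constants and running a continuity/bootstrap argument to show that the gap stays $\geq d_p/2$ on all of $B_R$ (so that the denominator in the differentiation identity is controlled uniformly) is the technical heart of the argument.
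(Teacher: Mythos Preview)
Your strategy matches the paper's: it too asserts $|\lam_i-\lam_i(p)|\leq C\|\phi\|_{L^2(X)}R$ by appeal to Proposition~\ref{Prop_eigenvalueestimate}, picks $R=\min\{R_0,\,d_p/(100C\|\phi\|_{L^2})\}$ so that the oscillation is at most $d_p/100$ and $M\leq 2M_p$, and then invokes Proposition~\ref{eigenvalueestimate} to get $|\vp|\leq C(M_p+1)=C(C_0d_p+1)$. Your worry about the extra powers of $M_p/d_p$ in the Lipschitz constant is legitimate but resolves cleanly if you run the characteristic-equation differentiation on the \emph{rescaled} eigenvalues $\hat\lam_i=\lam_i/\|\phi\|_{L^2}$: the coefficients $p_k(\hvp)$ have $C^1$ norm $O(1)$ and the denominator $\prod_{j\ne i}|\hat\lam_i-\hat\lam_j|$ is bounded below by a constant depending only on $\Theta_{\hvp}$, so $|d\lam_i|\leq C(\Theta_{\hvp})\,\|\phi\|_{L^2(X)}$ directly---which is precisely the estimate the paper uses without further comment.
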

	
	\begin{proof}
		Let $M$ denote the maximum of the eigenvalues of $\vp$ over $B_R$. By Proposition \ref{Prop_eigenvalueestimate}, we obtain:
		\[
		|M - M_x| \leq C \|\phi\|_{L^2(X)} R, \quad |\lambda_i - \lambda_i(x)| \leq C \|\phi\|_{L^2(X)} R.
		\]
		We take $C_0 = \frac{M_x}{d_x}$ and $R = \min\left\{\frac{M_x}{C \|\phi\|_{L^2(X)}}, \frac{d_x}{100 C \|\phi\|_{L^2(X)}}\right\}$. Then, $M \leq 2M_x$ and $|\lambda_i - \lambda_i(x)| \leq \frac{d_x}{100}$. By Proposition \ref{eigenvalueestimate}, we have
		\[
		\max_{B_R} |\vp| \leq C(M + 1) \leq 2C(M_x + 1) \leq 2C(C_0 d_x + 1).
		\]
		Additionally, since $d_x \leq 2M_x$, we have $\frac{d_x}{100 C \|\phi\|_{L^2(X)}} \leq \frac{2M_x}{C \|\phi\|_{L^2(X)}}$. Rewriting the constants, we obtain the desired result.
	\end{proof}
	
	\begin{remark}
		The significance of the constants in Proposition \ref{Prop_higgsbundlesemisimpledecompositionestimate} is that they depend only on the rescaled Higgs field $\hvp$, not on $\vp$. By Theorem \ref{Theorem_convergencespectralcover}, for an unbounded sequence, the spectral cover of the rescaled Higgs bundle converges. Therefore, constants depending on the spectral data of $\hvp$ are well controlled.
	\end{remark}
	
	\begin{corollary}
		Under the previous assumptions, over $B_R$, we have $|\lambda_i - \lambda_j| \geq \frac{48}{50} d_x$.
	\end{corollary}
	
	\begin{proof}
		By the definition of $d_x$, we have $|\lambda_i(x) - \lambda_j(x)| \geq d_x$. The statement follows from the inequality
		\[
		|\lambda_i(x) - \lambda_j(x)| \leq |\lambda_i - \lambda_i(x)| + |\lambda_i - \lambda_j| + |\lambda_j - \lambda_j(x)| \leq \frac{1}{50} d_x + |\lambda_i - \lambda_j|.
		\]
	\end{proof}
\end{subsection}
\begin{subsection}{Estimate on the Projection}
	For the decomposition \eqref{eq_decomposition_Higgs_bundles}, let $\pi_i: (E, \bar{\pa}_E) \to (E_i, \bar{\pa}_i)$ be the holomorphic projection onto each line bundle. Using the Hermitian metric, we define the orthogonal projection $\pi_i': (E, \bar{\pa}_E) \to (E_i, \bar{\pa}_i)$. In this subsection, we derive estimates and analyze the difference between these two projections.

	\begin{lemma}
		\label{Lemma_factsabouttheseprojection}
			Define $\chi_i := \pi_i - \pi_i'$, then the following hold:
		\[
		\bar{\pa}_E \pi_i = 0, \quad [\vp, \pi_i] = 0, \quad \pi_i'^{\dagger} = \pi_i', \quad |\pi_i'| = 1, \quad |\pi_i|^2 = 1 + |\chi_i|^2.
		\]
	\end{lemma}
	
	\begin{proof}
		The relations $\bar{\pa}_E \pi_i = 0$ and $[\vp, \pi_i] = 0$ follow from the fact that the decomposition \eqref{eq_decomposition_Higgs_bundles} is holomorphic. The properties $\pi_i'^{\dagger} = \pi_i'$ and $|\pi_i'| = 1$ follow from the definition of $\pi_i'$, which respects the Hermitian metric. Note that for $s\in E_i$, we have $\chi_is=0$ and for $s\in E_i^{\perp}$, $\pi_i's=0$. Therefore, we have $\langle \pi_i', \chi_i \rangle = 0$, which implies $|\pi_i|^2 = |\pi_i'|^2 + |\chi_i|^2$.
	\end{proof}
	
	The following pointwise linear algebra lemma is widely used by Mochizuki in \cite{mochizuki2003asymptoticdifferent, mochizuki2008wild}.
	
	\begin{lemma}{\cite[Lemma 2.8]{Mochizukiasymptotic}}
		\label{Lemma_Vectorspaceprojection}
		Let $V$ be a vector space of rank $r$ with a Hermitian metric $h$, and let $\theta$ be an endomorphism of $V$. Suppose there exists a decomposition $(V, \theta) = \oplus_{i=1}^r (V_i, \theta_i)$, where $\rank(V_i) = 1$. Suppose further that the following data hold:
		\begin{itemize}
			\item [(i)] $\alpha_1, \dots, \alpha_r$ are $r$ complex numbers, and $d := \min_{i \neq j} |\alpha_i - \alpha_j|>0$,
			\item [(ii)] $|\theta|_h \leq C_0(d + 1)$,
			\item [(iii)] $|\theta_i - \alpha_i| \leq \frac{d}{100}$.
		\end{itemize}
		Let $\pi_i$ be the projection of $V$ onto $V_i$, and let $\pi_i'$ be the orthogonal projection of $V$ onto $V_i$. Define $\chi_i := \pi_i - \pi_i'$. Then:
		\begin{itemize}
			\item [(i)] There exists a constant $C$ depending only on $r$ such that $|\pi_i|_h \leq C$ and $|\chi_i| \leq C$.
			\item [(ii)] There exists a constant $C'$ depending only on $r$ such that $|[\theta^{\dagger}, \pi_i]|_h \geq C' d |\chi_i|$.
		\end{itemize}
	\end{lemma}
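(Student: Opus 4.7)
My plan is to derive both bounds from the holomorphic functional calculus for $\theta$, leveraging that the hypothesis $|\theta_i - \alpha_i| \leq d/100$ forces the $r$ eigenvalues of $\theta$ to be pairwise separated by at least $98d/100$. In particular $\theta$ is diagonalizable, and each $\pi_i$ is the Riesz spectral projection onto its $\theta_i$-eigenspace. For part (i), I would represent
\begin{equation*}
\pi_i \;=\; \frac{1}{2\pi \sqrt{-1}}\oint_{\gamma_i}(\zeta\,\mathrm{Id} - \theta)^{-1}\,d\zeta,
\end{equation*}
where $\gamma_i$ is the positively oriented circle of radius $d/3$ centered at $\alpha_i$; this contour encloses only the eigenvalue $\theta_i$ and lies at distance at least $d/4$ from every eigenvalue of $\theta$. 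Bounding the resolvent pointwise via the cofactor formula $(\zeta\,\mathrm{Id}-\theta)^{-1} = \mathrm{adj}(\zeta\,\mathrm{Id}-\theta)/\det(\zeta\,\mathrm{Id}-\theta)$, together with $|\theta|_h \leq C_0(d+1)$, gives a uniform bound on $|\pi_i|_h$, from which $|\chi_i|_h \leq |\pi_i|_h + |\pi_i'|_h$ follows.

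For part (ii), I would pass to the orthogonal splitting $V = V_i \oplus V_i^\perp$. Since $[\theta,\pi_i] = 0$ forces $V_i$ to be $\theta$-invariant, $\theta$ is block upper triangular, $\theta = \left(\begin{smallmatrix} \theta_i & b \\ 0 & d_\flat \end{smallmatrix}\right)$, where $d_\flat$ acts on $V_i^\perp$ with spectrum $\{\theta_j\}_{j\neq i}$. Writing $\pi_i = \left(\begin{smallmatrix} 1 & A \\ 0 & 0 \end{smallmatrix}\right)$, one has $|\chi_i|_h = |A|_h$ and the Sylvester identity $b = \theta_i A - A d_\flat$. A direct block computation then shows that the $(1,2)$-block of $[\theta^\dagger,\pi_i]$ equals $\bar\theta_i A - A d_\flat^\dagger$, whose norm is $|(\theta_i\,\mathrm{Id} - d_\flat)A^\dagger|_h$. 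The desired inequality thus reduces to the spectral-gap estimate $|(\theta_i\,\mathrm{Id} - d_\flat)v|_h \geq B' d\,|v|_h$ for $v \in V_i^\perp$, which again follows from the cofactor bound on $(\theta_i\,\mathrm{Id}-d_\flat)^{-1}$ using that the eigenvalues of $\theta_i\,\mathrm{Id}-d_\flat$ have modulus $\geq 49d/50$.

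I expect the main obstacle to lie in the lower bound in (ii). The operator $\theta_i\,\mathrm{Id} - d_\flat$ is generally non-normal, so its smallest singular value can be strictly smaller than its smallest eigenvalue modulus; a naive adjugate estimate would only give a factor $d^{r-1}$ rather than the sharp $d$. To recover the correct power, I plan to combine two pieces of information: the Sylvester constraint $b = \theta_i A - Ad_\flat$ rules out the worst alignment of $A^\dagger$ with small-singular-value directions, and the bound $|\theta|_h \leq C_0(d+1)$ controls $|b|_h$. If needed, I would fall back on the explicit rank-one description $\chi_i = f_i \otimes (g_i - f_i)$, where $\{f_i\}$ is a unit eigenbasis of $\theta$ and $\{g_i\}$ is the dual basis satisfying $\theta^\dagger g_i = \bar\theta_i g_i$, and compute the commutator $[\theta^\dagger,\pi_i]$ in this basis so that the spectral-gap factor $d$ appears directly from $\theta^\dagger g_i - \bar\theta_j g_i = (\bar\theta_i - \bar\theta_j)g_i$ applied to the Gram-matrix expansion of $g_i - f_i$.
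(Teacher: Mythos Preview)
The paper does not prove this lemma; it is quoted from \cite[Lemma~2.8]{Mochizukiasymptotic}. I will therefore comment on the soundness of your plan rather than compare.

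Your Riesz-projector approach to (i) is fine, but note that the bound you obtain necessarily depends on $C_0$, not only on $r$: the $2\times 2$ example $\theta=\bigl(\begin{smallmatrix}1&N\\0&-1\end{smallmatrix}\bigr)$ has $|\pi_1|\sim N\sim C_0$. The statement as written is slightly imprecise on this point, but in the application one only needs $B=B(r,C_0)$, and since $d$ is large there the adjugate factor $(1+1/d)^{r-1}$ you will pick up is harmless.

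For (ii), your block decomposition is correct and the $(1,2)$-block of $[\theta^{\da},\pi_i]$ does have norm $|SA^{\da}|$ with $S:=d_\flat-\theta_i\Id$. The non-normality worry is genuine, but the remedy is simpler than either of your two fallbacks: \emph{feed part (i) back into part (ii)}. The spectral projectors of $d_\flat$ on $V_i^\perp$ are $\tilde\pi_j=(\Id-\pi_i')\,\pi_j\big|_{V_i^\perp}$ for $j\neq i$, hence $|\tilde\pi_j|\leq|\pi_j|\leq B$. Writing $S^{-1}=\sum_{j\neq i}(\theta_j-\theta_i)^{-1}\tilde\pi_j$ gives
\[
|S^{-1}|\;\leq\;\sum_{j\neq i}\frac{|\tilde\pi_j|}{|\theta_j-\theta_i|}\;\leq\;\frac{100(r-1)B}{98\,d},
\]
and therefore $|[\theta^{\da},\pi_i]|\geq|SA^{\da}|\geq \frac{98}{100(r-1)B}\,d\,|\chi_i|$. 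No Sylvester argument or Gram-matrix expansion is needed.
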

	
	\begin{proposition}
		\label{Prop_adjointwithprojectionestimate}
		Under the previous notation, for $x\in X \setminus Z_{\vp}$. There exist constants $R$, $C$, and $C'$, depending on $r$ and $\mbs_{\hvp}$, such that over $B_R$, we have $$|\pi_i| \leq C,\;|\chi_i| \leq C',\;|[\vp^{\dagger}, \pi_i]| \geq C' d_x |\chi_i|.$$
	\end{proposition}
	
	\begin{proof}
		For the decomposition \eqref{eq_decomposition_Higgs_bundles}, we work locally in $B_R$. Choose an orthogonal basis $dz_1, \dots, dz_n$ for $T^* B_R$, and write $\vp_i = \sum_{j=1}^n \vp_i^j dz_j$. Also, write $\lambda_i = \sum_{j=1}^n \gamma_i^j dz_j$ with $\vp_{i}^j=\gamma_i^j\Id_{E_i}$ and $\vp = \sum_{j=1}^n \psi^j dz_j$ with $\psi^j = \sum_{i=1}^r \vp_i^j$.
		
		Define $d_{x,j} := \min_{i \neq i'} |\gamma_i^j - \gamma_{i'}^j|(x) \geq 0$, which measures the difference of the eigenvalues under the frame $dz_j$. By definition, $d_x = \sum_{j=1}^n d_{x,j}$. Let $j_0$ such that $d_{x,j_0} = \max_j \{d_{x,j}\}$, so that $d_{x,j_0} \leq d_x \leq n d_{x,j_0}$. As $d_x>0$, we have $d_{x,j_0}>0.$
		
		By Proposition \ref{Prop_higgsbundlesemisimpledecompositionestimate}, we have:
		\[
		|\psi^{j_0}|\leq |\vp| \leq C_0 (d_x + 1) \leq n C_0 (d_{x,j_0} + 1),
		\]
		and
		\[
		|\gamma_i^{j_0} - \gamma_i^{j_0}(x)| \leq |\lambda_i - \lambda_i(x)| \leq \frac{d_x}{100} \leq \frac{n d_{x,j_0}}{100}.
		\]
		Applying Lemma \ref{Lemma_Vectorspaceprojection} to $\psi^{j_0}$ with $d = d_{x,j_0}$ and $\alpha_i = \gamma^{j_0}_i(x)$, we obtain constants $C, C'$ such that $|\pi_i| \leq C$, $|\chi_i| \leq C'$, and $|[(\psi^{j_0})^{\dagger}, \pi_i]| \geq C' d_{x,j_0} |\chi_i|$. Therefore,
		\[
		|[\vp^{\dagger}, \pi_i]|^2 = 2 \sum_{j=1}^n |[(\psi^j)^{\dagger}, \pi_i]|^2 \geq |[(\psi^{j_0})^{\dagger}, \pi_i]|^2 \geq (C')^2 d_{x,j_0}^2 |\chi_i|^2 \geq \frac{(C')^2}{n^2} d_x^2 |\chi_i|^2.
		\]
	\end{proof}
	
	\begin{lemma}
		\label{Lemma_Alforslemma}
		Let $f \geq 0$ be a smooth function over $B_R$ such that $f \leq C_1$ and $\Delta f \leq -d^2 f$. Then, $|f| \leq C_1 \exp(-C_2 d (R^2 - r^2))$, where $C_2 = \min\left\{\frac{d}{4n}, \frac{1}{4R}\right\}$.
	\end{lemma}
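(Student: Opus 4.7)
The natural approach is a barrier/maximum-principle comparison. I would introduce the explicit comparison function
\[
g(x) := C_1\exp\bigl(-C_2 d(R^2 - r^2)\bigr),\qquad r(x) := \mathrm{dist}(x,p),
\]
and aim to show $f\le g$ pointwise on $B_R$. The boundary step is immediate: on $\partial B_R$ one has $r=R$, hence $g\equiv C_1$, so $h := g-f \geq 0$ on $\partial B_R$ by the hypothesis $f\le C_1$.

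Next I would verify that $g$ satisfies the reverse differential inequality $\Delta g \geq -d^2 g$ throughout $B_R$. Writing $\psi := C_2 d(r^2-R^2)$ and using the identity $\Delta e^{\psi} = (\Delta\psi - |\nabla\psi|^2)e^{\psi}$ together with $|\nabla r^2|^2 = 4r^2$ and $\Delta(r^2) = -2n$ (computed in geodesic normal coordinates, valid up to the injective radius), one obtains
\[
\Delta g \;=\; -\bigl(2nC_2 d + 4C_2^2 d^2 r^2\bigr)\,g.
\]
With the choice $C_2 = \min\{d/(4n),\, 1/(4R)\}$, the two summands on the right are bounded by $d^2/2$ and $d^2/4$ respectively for $r\le R$, so $\Delta g + d^2 g \geq \tfrac14 d^2 g \geq 0$, with positive slack.

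The last step is the maximum principle. Suppose for contradiction that $h := g-f$ attains a strictly negative interior minimum at some $x_0\in B_R$. In the paper's convention (geometric Laplacian $\Delta = d^\star d$) one has $\Delta h(x_0)\le 0$, while
\[
\Delta h(x_0) \;=\; \Delta g(x_0) - \Delta f(x_0) \;\geq\; \Delta g(x_0) + d^2 f(x_0) \;>\; \Delta g(x_0) + d^2 g(x_0) \;\geq\; 0,
\]
using $f(x_0) > g(x_0) > 0$ and the inequality just established. This contradiction forces $h\geq 0$ on $B_R$, which is exactly the claimed bound.

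The only delicate point in this scheme is that on the Kähler manifold $\Delta(r^2) = -2n + O(r^2)$ rather than exactly $-2n$, so the curvature correction has to be absorbed into the built-in slack $\tfrac14 d^2 g$ in the second step (shrinking $R$ or enlarging the constant in the denominator of $C_2$ if necessary). Apart from this bookkeeping, the proof is a textbook exponential barrier argument for supersolutions of $\Delta_a u \geq d^2 u$, so I expect no conceptual obstacle.
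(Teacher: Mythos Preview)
Your argument is correct and essentially identical to the paper's own proof: the paper introduces the same comparison function $g=C_1 e^{C_2 d(r^2-R^2)}$, performs the same Laplacian computation $\Delta g = -(2nC_2 d + 4C_2^2 d^2 r^2)g$, makes the same choice of $C_2$ to force $\Delta g\ge -d^2 g$, and concludes by the maximum principle applied to $f-g$. Your remark about the curvature correction in $\Delta(r^2)$ is a point the paper simply glosses over; the built-in slack you identified is indeed what absorbs it.
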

	
	\begin{proof}
		Let $r$ be the radial distance to the center of $B_R$, and consider a comparision function $h(r):=e^{C_2d_xr^2}$ such that for suitable choice of $C_2$, we have $\Delta h(r)\geq -d^2h(r)$. Define $g := C_1 h(r)$, which satisfies $g|_{\partial B_R} = C_1 \geq f$. Then,
		\[
		\Delta(f - g) \leq d^2 (f - g) \quad \text{over} \ B_R, \quad (f - g)|_{\partial B_R} \leq 0.
		\]
		By the maximum principle, $f \leq g$ over $B_R$.
	\end{proof}
	
By Proposition \ref{Prop_higgsbundlesemisimpledecompositionestimate}, we have the following proposition:
	
	\begin{proposition}
		\label{prop_estimateonchi}
		Under the previous conventions, let $r$ denote the distance to $p$. Over $B_R$, we have $|\chi_i| \leq C e^{-C' d_x (R^2 - r^2)}$, where $C, C'$ depend on $\mbs_{\hvp}$.
	\end{proposition}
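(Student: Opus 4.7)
The strategy is to turn the semisimple-decomposition picture into an elliptic differential inequality for $|\chi_i|^2$ and then invoke the maximum-principle Lemma \ref{Lemma_Alforslemma} to extract the exponential decay. The main point is that $\pi_i$, although not the orthogonal projection, is a \emph{holomorphic} section of $\End(\ME)|_{B_R}$ that commutes with $\vp$, so Lemma \ref{Lemma_conjugatevanishing}(ii) applies directly to it.

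First I would apply Lemma \ref{Lemma_conjugatevanishing}(ii) to the section $s=\pi_i$. By Lemma \ref{Lemma_factsabouttheseprojection}, $\bar{\pa}_{\ME}\pi_i=0$ and $[\vp,\pi_i]=0$, so the lemma yields
\begin{equation*}
\Delta|\pi_i|^2+2|\pa_A\pi_i|^2+2|[\vp^{\da},\pi_i]|^2=0.
\end{equation*}
Since $|\pi_i|^2=1+|\chi_i|^2$ by Lemma \ref{Lemma_factsabouttheseprojection}, we have $\Delta|\pi_i|^2=\Delta|\chi_i|^2$, and throwing away the nonnegative term $2|\pa_A\pi_i|^2$ gives
\begin{equation*}
\Delta|\chi_i|^2\leq -2|[\vp^{\da},\pi_i]|^2.
\end{equation*}

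Next I would plug in the lower bound from Proposition \ref{Prop_adjointwithprojectionestimate}, namely $|[\vp^{\da},\pi_i]|_H\geq B'd_p|\chi_i|_H$, to obtain
\begin{equation*}
\Delta|\chi_i|^2\leq -2(B')^2 d_p^2\,|\chi_i|^2 \quad\text{on } B_R,
\end{equation*}
together with the pointwise bound $|\chi_i|^2\leq B^2$ also supplied by Proposition \ref{Prop_adjointwithprojectionestimate}. These are exactly the hypotheses of Lemma \ref{Lemma_Alforslemma} with $f=|\chi_i|^2$, $C_1=B^2$, and $d=\sqrt{2}B' d_p$. The lemma then gives $|\chi_i|^2\leq B^2\exp\bigl(-C_2\sqrt{2}B' d_p(R^2-r^2)\bigr)$, and taking square roots yields the claimed inequality
\begin{equation*}
|\chi_i|\leq C\exp\bigl(-C' d_p (R^2-r^2)\bigr),
\end{equation*}
with $C$ and $C'$ depending only on $r$ and on $\Theta_{\hvp}$ (through the constants $B$, $B'$ of Proposition \ref{Prop_adjointwithprojectionestimate}).

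The only subtle point is bookkeeping of constants: the $C_2$ from Lemma \ref{Lemma_Alforslemma} is $\min\{d/(4n),1/(4R)\}$, so the effective exponential rate is $\min\{(B')^2 d_p^2/(2n),\sqrt{2}B' d_p/(4R)\}\cdot(R^2-r^2)$, which in either regime dominates $C' d_p (R^2-r^2)$ after adjusting $C'$; since $R$ is itself controlled via Proposition \ref{Prop_higgsbundlesemisimpledecompositionestimate} by $\Theta_{\hvp}$, the final constants depend only on $r$ and $\Theta_{\hvp}$ as required. No further obstacle is expected beyond this constant-tracking, since all the analytic input is already packaged in Lemma \ref{Lemma_conjugatevanishing}(ii), Proposition \ref{Prop_adjointwithprojectionestimate}, and Lemma \ref{Lemma_Alforslemma}.
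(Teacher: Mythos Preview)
Your proof is correct and follows essentially the same route as the paper: apply Lemma \ref{Lemma_conjugatevanishing}(ii) to the holomorphic section $\pi_i$, use $|\pi_i|^2=1+|\chi_i|^2$ to convert the resulting identity into a differential inequality for $|\chi_i|^2$, feed in the lower bound $|[\vp^{\da},\pi_i]|\geq B'd_p|\chi_i|$ from Proposition \ref{Prop_adjointwithprojectionestimate}, and finish with Lemma \ref{Lemma_Alforslemma}. Your version is in fact slightly more explicit than the paper's about why $\pi_i$ meets the hypotheses of Lemma \ref{Lemma_conjugatevanishing}(ii) and about the constant bookkeeping in the application of Lemma \ref{Lemma_Alforslemma}.
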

	
	\begin{proof}
		By Lemma \ref{Lemma_conjugatevanishing}, we have:
		\[
		\Delta |\pi_i|^2 + 2|\partial_A \pi_i|^2 + 2|[\vp^{\dagger}, \pi_i]|^2 = 0.
		\]
		By Proposition \ref{Prop_adjointwithprojectionestimate}, we have $|[\vp^{\dagger}, \pi_i]| \geq B' d_x |\chi_i|$ and $|\chi_i| \leq B$, with $B, B'$ constants depending on the spectral data $\mbs_{\hvp}$. Moreover, by Lemma \ref{Lemma_factsabouttheseprojection}, $|\pi_i|^2 = |\chi_i|^2 + 1$.
		
		Therefore, we obtain:
		\[
		\Delta |\chi_i|^2 \leq -B^2 d_x^2 |\chi_i|^2, \quad \text{with} \ |\chi_i| \leq B'.
		\]
		Applying Lemma \ref{Lemma_Alforslemma}, we get $|\chi_i|^2 \leq B'^2 e^{-C' d_x (R^2 - r^2)}$, with $C' = \min\left\{\frac{B d}{4n}, \frac{\sqrt{B}}{4R}\right\}$.
	\end{proof}
	
	\begin{corollary}
		\label{corocommutevanishofpi}
		Under the previous assumptions, we have the following estimates over the half-sized ball $B_{\frac{R}{2}}$:
		\begin{itemize}
			\item [(i)] $|\pi_i-\pi_i^{\da}|\leq Ce^{-C'd_x}$
			\item [(ii)] $|[\vp, \pi_i^{\dagger}]| \leq C e^{-C' d_x}$ over $B_{\frac{R}{2}}$.
			\item [(iii)] Let $s_i, s_j$ be local sections of $\ME_i$ and $\ME_j$, respectively. Then,
			\[
			|H(s_i, s_j)| \leq C e^{-C' d_x} |s_i|_H |s_j|_H,
			\]
			where $C, C'$ depend on $\mbs_{\hvp}$.
		\end{itemize}
	\end{corollary}
	
	\begin{proof}
		As $|\pi_i-\pi_i^{\da}|=|\pi_i-\pi_i'+\pi_i'-\pi_i^{\da}|\leq |\chi_i|+|\chi_i^{\da}|$, by Proposition \ref{prop_estimateonchi}, we obtain (i). 
		
		As $[\vp,\pi_i]=0$, we have $|[\vp,\pi_i^{\da}]|=|[\vp,\pi_i-\pi_i^{\da}]|\leq |\vp||\pi_i-\pi_i^{\da}|$. By Proposition \ref{Prop_higgsbundlesemisimpledecompositionestimate} and \ref{prop_estimateonchi}, over $B_{\frac{R}{2}}$, we have $|\vp| \leq C(d_x + 1)$. Then by (i) and adjust the constants, we obtain (ii).
		
		For (iii), we compute $H(s_i, s_j) = H(\pi_i s_i, s_j) = H(s_i, (\pi_i^{\dagger} - \pi_i) s_j)$. Since $\pi'^{\dagger} = \pi'$, we have $\pi_i^{\dagger} - \pi_i = \chi_i^{\dagger} - \chi_i$. By Proposition \ref{prop_estimateonchi}, we obtain the desired estimate.
	\end{proof}
	
	\begin{proposition}
		\label{Theoremcurvatureestimate}
		Under the previous assumptions, over $B_{\frac{R}{2}}$, we have $|[\vp, \vp^{\dagger}]| \leq C e^{-C' d_x}$. In particular, $|\Lambda F_A^{\perp}| = |\Lambda [\vp, \vp^{\dagger}]| \leq C e^{-C' d_x}$, where $C, C'$ depend on $\mbs_{\hvp}$.
		
		Moreover, if $\deg(E) = 0$ and $\ch_2(E) \cdot [\omega]^{n-2} = 0$, then $|F_A| \leq C e^{-C' d_x}$.
	\end{proposition}
	
	\begin{proof}
		We write $\vp^{\dagger} = \sum_{i,j=1}^r \pi_i \circ \vp^{\dagger} \circ \pi_j$. Then,
		\[
		|[\vp, \vp^{\dagger}]| = \sum_{k,i,j} |[\vp_k, \pi_i \circ \vp^{\dagger} \circ \pi_j]|.
		\]
		By Corollary \ref{corocommutevanishofpi}, for $i \neq j$, we have $|\pi_i \circ \vp^{\dagger} \circ \pi_j| = |[\pi_i, \vp^{\dagger}] \circ \pi_j| \leq C e^{-C d_x}$. Additionally, for $i \neq k$, we have $[\vp_k, \pi_i \circ \vp^{\dagger} \circ \pi_i] = 0$. Since $\rank(\ME_i) = 1$, we get $[\vp_k, \pi_k \circ \vp^{\dagger} \circ \pi_k] = 0$. Therefore, we obtain the desired estimate for $[\vp, \vp^{\dagger}]$. The estimate for $|\Lambda F_A^{\perp}|$ follows directly from the Hitchin-Simpson equations \eqref{Hitchin-Simpson}.
		
		Furthermore, if $\deg(E) = 0$ and $\ch_2(E) \cdot [\omega]^{n-2} = 0$, then
		\[
		|F_A| \leq |[\vp, \vp^{\dagger}]| \leq C e^{-C' d_x}.
		\]
	\end{proof}
	
	\begin{proposition}
		Over $B_R$, we have $\|d_A^{*} F_A\|_{L^p} \leq C e^{-C' d_x} \|\nabla_A \phi\|_{L^p}$, where $C, C'$ depend on $\mbs_{\hvp}$.
		\label{Prop_curvaturederivativeestimates}
	\end{proposition}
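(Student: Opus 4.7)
The plan is to reduce $d_A^{\star}F_A$ to a first-order expression in $[\varphi,\varphi^{\dagger}]$ via the K\"ahler identities and the Hitchin--Simpson equation, and then to bound that expression pointwise by $Ce^{-C'd_p}|\nabla_A\phi|$ using the local holomorphic decomposition together with the exponential smallness established in Proposition~\ref{prop_estimateonchi} and Corollary~\ref{corocommutevanishofpi}.

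First I invoke the Bianchi identity $d_AF_A=0$, which combined with $F_A^{0,2}=F_A^{2,0}=0$ gives $\partial_A F_A^{1,1}=\bar\partial_A F_A^{1,1}=0$, and then the K\"ahler identities $\partial_A^{\star}=i[\Lambda,\bar\partial_A]$ and $\bar\partial_A^{\star}=-i[\Lambda,\partial_A]$. These together reduce the second-order operator $d_A^{\star}F_A$ to the first-order quantity $i(\partial_A-\bar\partial_A)(\Lambda F_A^{1,1})$. Substituting the Hitchin--Simpson equation $i\Lambda F_A^{\perp}=-i\Lambda[\varphi,\varphi^{\dagger}]$ and noting that the constant piece $\gamma\,\mathrm{Id}_E$ is annihilated by $d_A$, one obtains
\[
d_A^{\star}F_A = -i(\partial_A-\bar\partial_A)\Lambda[\varphi,\varphi^{\dagger}].
\]
Writing $\Lambda[\varphi,\varphi^{\dagger}]=-2i\sum_k[\varphi_k,\varphi_k^{\dagger}]$ in a local unitary frame $\{dz^k\}$, and using $\bar\partial_A\varphi_k=0$ together with its Hermitian conjugate $\partial_A\varphi_k^{\dagger}=0$, the graded Leibniz rule simplifies each derivative: $\partial_A[\varphi_k,\varphi_k^{\dagger}]=[\partial_A\varphi_k,\varphi_k^{\dagger}]$ and $\bar\partial_A[\varphi_k,\varphi_k^{\dagger}]=[\varphi_k,\bar\partial_A\varphi_k^{\dagger}]$. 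Thus the proposition reduces to the pointwise estimate
\[
|[\partial_A\varphi_k,\varphi_k^{\dagger}]|+|[\varphi_k,\bar\partial_A\varphi_k^{\dagger}]|\leq C e^{-C'd_p}|\nabla_A\phi|
\]
for each $k$, after which integration in $L^p$ over $B_R$ gives the claim.

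For the pointwise bound I insert $1=\sum_i\pi_i$ to expand $\varphi_k^{\dagger}=\sum_{i,j}\pi_i\varphi_k^{\dagger}\pi_j$. The off-diagonal terms $i\neq j$ contribute at most $Ce^{-C'd_p}|\nabla_A\phi|$ immediately, since $|\pi_i\varphi_k^{\dagger}\pi_j|\leq Ce^{-C'd_p}$ by the argument used in the proof of Proposition~\ref{Theoremcurvatureestimate}. For the diagonal terms I use $\mathrm{rank}\,\mathcal{E}^i=1$ to write $\pi_i\varphi_k^{\dagger}\pi_i=g_i^k\pi_i$ with scalar $g_i^k$, and compute $g_i^k=\mathrm{tr}(\pi_i\varphi_k^{\dagger})=\bar\lambda_i^k+O(e^{-C'd_p})$ by splitting $\pi_i=\pi_i'+\chi_i$ and invoking the exponential smallness of $\chi_i$ from Proposition~\ref{prop_estimateonchi}. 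Differentiating $[\varphi_k,\pi_i]=0$ yields $[\partial_A\varphi_k,\pi_i]=-[\varphi_k,\partial_A\pi_i]$, and the analogous relation handles the $\bar\partial_A$ piece. Combining these ingredients, every diagonal contribution is re-expressed as an off-diagonal expression in the projections, which is then exponentially small by Corollary~\ref{corocommutevanishofpi}.

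The main obstacle is controlling the derivatives of the holomorphic projections: the naive bound $|\partial_A\pi_i|\leq C|\nabla_A\phi|/d_p$ extracted from differentiating $[\varphi,\pi_i]=0$ is merely polynomial in $d_p^{-1}$, not exponentially small, so a crude application of the triangle inequality to the diagonal commutator fails. The remedy, following Mochizuki~\cite{Mochizukiasymptotic}, is to exploit the algebraic structure $g_i^k\approx\bar\lambda_i^k$ together with the fact (from $\pi_i^2=\pi_i$) that $\partial_A\pi_i$ has only off-diagonal components with respect to the decomposition, so that the combination appearing in the diagonal commutator telescopes into terms controlled solely by the $\chi_i$'s and by off-diagonal projections $\pi_i\varphi^{\dagger}\pi_j$ with $i\neq j$. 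The K\"ahler identities preserve the exact algebraic form of Mochizuki's argument, which is what allows the extension from Riemann surfaces to higher-dimensional K\"ahler manifolds.
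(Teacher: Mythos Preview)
Your reduction via the K\"ahler identities and the Hitchin--Simpson equation, followed by the decomposition $\varphi^{\dagger}=\sum_{i,j}\pi_i\varphi^{\dagger}\pi_j$, is exactly the paper's route. The off-diagonal terms are handled identically. Where you diverge is in the treatment of the diagonal piece: the paper disposes of it in one line, asserting that $[\pi_i\circ\partial_A\varphi,\pi_j\circ\varphi^{\dagger}\circ\pi_j]=0$ for $i\neq j$ (from $\pi_j\pi_i=0$) and $[\pi_i\circ\partial_A\varphi,\pi_i\circ\varphi^{\dagger}\circ\pi_i]=0$ (from $\mathrm{rank}\,\mathcal{E}^i=1$), so that only the off-diagonal blocks of $\varphi^{\dagger}$ survive. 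You instead correctly observe that these vanishing claims are delicate---one side of each commutator does not obviously vanish, because $\partial_A\varphi$ need not respect the holomorphic splitting---and propose a more careful telescoping argument \`a la Mochizuki.

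The gap in your proposal is that you describe the remedy but do not execute it. Saying that ``the combination appearing in the diagonal commutator telescopes into terms controlled solely by the $\chi_i$'s'' is precisely the content of the proposition, and you have not shown how the cancellation occurs. A clean way to close this is to bypass the telescoping entirely and instead establish the auxiliary bound $|\partial_A\pi_i|\leq Ce^{-C'd_p}$ directly: since $\bar\partial_A\pi_i=0$ one has $\partial_A^{\star}\partial_A\pi_i=i[\Lambda F_A,\pi_i]$, and the right-hand side is $O(e^{-C'd_p})$ by Proposition~\ref{Theoremcurvatureestimate}; combined with $|\pi_i-\pi_i^{\dagger}|\leq Ce^{-C'd_p}$ from Proposition~\ref{prop_estimateonchi} and interior elliptic estimates, this gives exponential decay of $\partial_A\pi_i$ on the half-ball. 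Once you have that, the off-diagonal components $\pi_i(\partial_A\varphi)\pi_j=(\lambda_j-\lambda_i)\pi_i(\partial_A\pi_j)$ are exponentially small (absorbing the factor $|\lambda_j-\lambda_i|\leq Cd_p$), and the diagonal commutator is immediately controlled without any telescoping.
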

	
	\begin{proof}
		We compute $d_A^{*} F_A = i \partial_A \Lambda F_A - i \bar{\partial}_A \Lambda F_A$. Since $|\partial_A \Lambda F_A| = |\bar{\partial}_A \Lambda F_A|$, it suffices to estimate $\partial_A \Lambda F_A$. Write $\vp = \sum_i\vp_i dz_i$, $\vp^{\dagger} = \sum_i\vp_i^{\dagger} d\bar{z}_i$, and $\bar{\partial}_A = \sum_i\bar{\partial}_i d\bar{z}_i$.
		
		We compute:
		\begin{equation*}
			\begin{split}
						|\partial_A \Lambda F_A| = |\partial_A \Lambda [\vp, \vp^{\dagger}]| \leq& |\Lambda \partial_A [\vp, \vp^{\dagger}]| + |\bar{\partial}_A^{*} [\vp, \vp^{\dagger}]| \\
						\leq& |\Lambda [\partial_A \vp, \vp^{\dagger}]| + \sum_{i=1}^r |[\bar{\partial}_i \vp_i, \vp_i^{\dagger}]| + \sum_{i=1}^r |[\vp_i, \bar{\partial}_i \vp_i^{\dagger}]|.
			\end{split}
		\end{equation*}

		Write $\vp^{\dagger} = \sum_{i,j=1}^r \pi_i \circ \vp^{\dagger} \circ \pi_j$. For $i \neq j$, by Corollary \ref{corocommutevanishofpi}, we have $$|\pi_i \circ \vp^{\dagger} \circ \pi_j| = |[\pi_i, \vp^{\dagger}] \circ \pi_j| \leq C e^{-C' d_x},$$ and $[\pi_i \circ \partial_A \vp, \pi_j \circ \vp^{\dagger} \circ \pi_j] = 0$ for $i \neq j$.
		
		Additionally, since $\rank(\ME_i) = 1$, we have $[\pi_i \circ \partial_A \vp, \pi_i \circ \vp^{\dagger} \circ \pi_i] = 0$. Therefore, we obtain:
		\[
		|[\partial_A \vp, \vp^{\dagger}]| = |[\pi_i \partial_A \vp, \pi_i \circ \vp^{\dagger} \circ \pi_j]| \leq C e^{-C' d_x} |\partial_A \vp|.
		\]
		Similarly, we have the estimate $|[\bar{\partial}_i \vp_i, \vp_i^{\dagger}]| \leq C e^{-C' d_x} |\bar{\partial}_i \vp_i|$. Thus, we conclude:
		\[
		\|d_A^{*} F_A\|_{L^p} \leq C e^{-C' d_x} \|\nabla_A \phi\|_{L^p}.
		\]
	\end{proof}
\end{subsection}

\subsection{Decay Estimates on Line Bundles}
Let $(E,H)$ be a Hermitian vector bundle, and let $(A, \phi)$ be a solution to the Hitchin-Simpson equations with $(\bar{\pa}_E, \vp)$ corresponding to a Higgs bundle, assumed to be generically semisimple. For the decomposition \eqref{eq_decomposition_Higgs_bundles} over $B_R$, we study the decay estimate on the line bundles for the restriction of the harmonic bundle.

\subsubsection{Uhlenbeck Gauge Fixing}
We begin by introducing the Uhlenbeck gauge fixing condition. A unitary connection $A$ is said to have finite energy if $\int_X |F_A|^2 < \infty$, and $A$ is integrable if $F_A^{(2,0)} = 0$. The following result holds:

\begin{theorem}{\cite{uhlenbeck1982connections, uhlenbeck1986aprior}}
	\label{Uhlenbeckcompactnessforlocalmodel}
	Let $E \to X$ be a complex vector bundle with Hermitian metric $H$. For any $p > n$, there exist $R_0 > 0$ and $\epsilon_0 > 0$ such that for an integrable connection $A$ on $E$, if the following conditions hold over a radius-$2R$ ball $B_{2R} \subset X$:
	\begin{itemize}
		\item [(i)] $\|\Lambda F_A\|_{L^{\infty}(B_{2R})} < \epsilon_0$,
		\item [(ii)] $R^{4-2n} \int_{B_{2R}} |F_A|^2 < \epsilon_0$,
		\item [(iii)] $R \leq \min(\epsilon_0, R_0)$,
	\end{itemize}
	then there exists a local trivialization of $E|_{B_{2R}} \cong B_{2R} \times \mathbb{C}^r$ such that, writing $d_A = d + A$, we have the estimate:
	\begin{equation}
		\label{eq_Uhlenbeckestimate}
		R^{2 - \frac{2n}{p}} \|\nabla A\|_{L^p(B_R)} + R^{1 - \frac{2n}{p}} \|A\|_{L^p(B_R)} \leq C R^{2-n} \|F_A\|_{L^2(B_{2R})} + C R^{\frac{2n}{p}} \|\Lambda F_A\|_{L^{\infty}(B_{2R})}.
	\end{equation}
\end{theorem}
The constant $\ep_0$ is usually called the Uhlenbeck constant. 

\subsubsection{First Derivative Estimates}
Let $(E,H)$ be the Hermitian vector bundle with $(A,\phi)$ a solution to \eqref{Hitchin-Simpson}.  For the decomposition \eqref{eq_decomposition_Higgs_bundles}, we have the holomorphic projection $\pi_i: E \to E_i$ and the orthogonal projection $\pi_i': E \to E_i$. We will show an almost orthogonality for these projections at the level of first derivatives, which might requires an adjustment of the Uhlenbeck constant $\ep_0$.

\begin{proposition}
	\label{estimateontheporjection}
	Suppose over $B_{2R}$ there exist a small constant $\epsilon_1$ and a large constant $d_1$ such that the Chern connection $d_A=\bar{\pa}_A+\pa_A$ satisfies the Uhlenbeck gauge fixing condition in Theorem \ref{Uhlenbeckcompactnessforlocalmodel} with $R^{2-n} \|F_A\|_{L^2(B_{2R})} \leq \epsilon_1$ and $d_x \geq d_1$. Then there exist positive constants $C, C'$ depends on $\mbs_{\hvp}$, $d_1$ and $\ep_1$, such that over $B_R$, we have:
	\begin{equation}
		\begin{split}
			|\partial_A \pi_i| = |\bar{\partial}_A \pi_i^{\dagger}| \leq C e^{-C' d_x}, \\
			|\partial_A \pi_i'| = |\bar{\partial}_A \pi_i'| \leq C e^{-C' d_x},
		\end{split}
	\end{equation}
	where the constants depend on $\mbs_{\hat{\vp}}$.
\end{proposition}

\begin{proof}
	We begin by estimating the derivative of $\pi_i$. Since $\bar{\partial}_A \pi_i = 0$, Theorem \ref{Theoremcurvatureestimate} implies that:
	\[
	|\Lambda \bar{\pa}_A \partial_A \pi_i| = |[\Lambda F_A, \pi_i]| \leq C e^{-C' d_x}.
	\]
	Since $\partial_A (\pi_i^{\dagger}) = 0$, we also have $|\Lambda\bar{\pa}_A \partial_A (\pi_i - \pi_i^{\dagger})| \leq C e^{-C' d_x}$. Moreover, by Corollary \ref{corocommutevanishofpi}, $|\pi_i - \pi_i^{\dagger}| \leq C e^{-C' d_x}$.
	
	We define $\nu:=\pi_i - \pi_i^{\dagger}$, using Theorem \ref{Uhlenbeckcompactnessforlocalmodel}, in the local trivialization $E|_{B_{2R}} \cong B_{2R} \times \mathbb{C}^r$, we write $d_A = d + A$ and apply the estimate \eqref{eq_Uhlenbeckestimate}. The covariant derivative with respect to the trivial connection can be schematically written as:
	\begin{equation}
		\begin{split}
		\label{eq_L2pestimate}
		\Lam\bar{\pa}_A\pa_A\nu=&\Lam(\bar{\pa}+A^{0,1})(\pa+A^{1,0})\nu\\
		=&\Lam\bar{\pa}\pa\nu+\Lam([A^{0,1},\pa\nu]+[A^{1,0},\bpa \nu])+\Lam[\bar{\pa}A^{1,0},\nu]+\Lam[A^{0,1},[A^{1,0},\nu]].
	\end{split}
	\end{equation}
	
	WLOG, we could rescale and assume $R=1$ and do the estimates on a unit size ball $B_1$. By Proposition \ref{Prop_curvaturederivativeestimates}, for $p>n$, we have $\MC^0\hookrightarrow L_1^p$. By \eqref{eq_Uhlenbeckestimate}, we have 
	\begin{equation}
		|[A^{0,1},[A^{1,0},\nu]]| \leq |A|^2 |\nu|\leq C(C_{\ep_0}+e^{-C'd_x})e^{-C'd_x},
	\end{equation}
	and 
	\begin{equation}
		\|[\bar{\pa}A^{1,0},\nu]\|_{L^p}\leq \|\na A\|_{L^{2p}}\|\nu\|_{L^{2p}}\leq C(C_{\ep_0}+e^{-C'd_x})e^{-C'd_x},
	\end{equation}
where $C_{\ep_0}$ is a constant depends on $\ep_0$.
	
	For $p>\frac{n}{2}$, we have $L^2_p\hookrightarrow L_1^{2p}$ and we have:
	\begin{equation}
		\begin{split}
			&\|[A^{0,1},\pa\nu]\|_{L^p}+\|[A^{1,0},\bar{\pa}\nu]\|_{L^p}\leq \|A\|_{L^{2p}}\cdot \|\na \nu\|_{L^{2p}}\\
			\leq &C(\|F_A\|_{L^2}+e^{-C'd})\| \nu\|_{L^p_2}\leq  C(\ep_1+e^{-C'd_x})\| \nu\|_{L^p_2}.
		\end{split}
	\end{equation}
By elliptic of $\Lam\bar{\pa}\pa$, we have $\|\nu\|_{L^p_2}\leq C(\|\Lam\bar{\pa}\pa\nu\|_{L^p}+\|\nu\|_{L^p})$. Combining with \eqref{eq_L2pestimate}, and previous estimates, we have 
\begin{equation}
	\begin{split}
		\|\nu\|_{L^p_2}\leq C(\ep_1+e^{-C'd_x})\|\nu\|_{L^p_2}+C_{\ep_0}e^{-C'd_x}.
	\end{split}
\end{equation}
	
Choosing $\epsilon_1$ and $d_1$ such that $C \epsilon_1 \leq \frac{1}{4}$ and for $d_x>d_1$, we have $C e^{-C' d_x} \leq \frac{1}{4}$. Therefore, for any $p> n$, we have
	\begin{equation}
		\|\pi_i - \pi_i^{\dagger}\|_{L_{2}^p}=\|\nu\|_{L_2^p}\leq C_{\ep_1} e^{-C' d_x},
	\end{equation}
where $C_{\ep_1}$ depends on the choice of $\ep_1$. Moreover, for $p>n$, we have
$$
|\pa_A\nu|\leq |\pa\nu|+|[A^{1,0},\nu]|\leq C(\|\nu\|_{L_2^p}+|\nu|)\leq Ce^{-C'd_x}.
$$
As $|\pa_A\nu|=|\partial_A (\pi_i - \pi_i^{\dagger})| = |\partial_A \pi_i|$, we obtain the desire estimate $|\pa_A\pi_i|\leq Ce^{-C'd_x}$.
	
	We now estimate $|\partial_A \pi_i'|$. Let $E = E_i \oplus E_i^{\perp}$ be the orthogonal decomposition using the harmonic metric $H$. Since $\chi_i := \pi_i - \pi_i'$ satisfies $\chi_i|_{E_i} = 0$, we can regard $\chi_i$ as a morphism $\chi_i: E_i^{\perp} \to E_i$, and similarly, $\chi_i^{\dagger} = \pi_i^{\dagger} - \pi_i: E_i \to E_i^{\perp}$.
	
	Since $(E_i,\pa_i)$ is a holomorphic subbundle, it has an induced holomorphic structure $\bar{\partial}_{i^{\perp}}$ on $E_i^{\perp}$, and we write $\bar{\partial}_A^0 := \bar{\partial}_{i} \oplus \bar{\partial}_{i^{\perp}}$ as the holomorphic structure on $E_i \oplus E_i^{\perp}$. Then, we could write $\bar{\partial}_A = \bar{\partial}_A^0 + \mfe$, where $\mfe \in \Omega^{0,1} \otimes \Hom(E_i^{\perp}, E_i)$. Since $\mfe|_{E_i} = \chi_i|_{E_i} = 0$, we have $[\mfe, \chi_i] = 0$. We compute:
	\[
	\bar{\partial}_A (\pi_i - \pi_i^{\dagger}) = \bar{\partial}_A \chi_i - \bar{\partial}_A \chi_i^{\dagger} = \bar{\partial}_A^0 \chi_i - \bar{\partial}_A^0 \chi_i^{\dagger} - \mfe \circ \chi_i^{\dagger} - \chi_i^{\dagger} \circ \mfe.
	\]
	
	Noting that:
	\begin{equation*}
		\begin{split}
			\bar{\partial}_A^0 \chi_i \in \Omega^{0,1} \otimes \Hom(E_i^{\perp}, E_i), \quad \bar{\partial}_A^0 \chi_i^{\dagger} \in \Omega^{0,1} \otimes \Hom(E_i, E_i^{\perp}), \\
			\mfe \circ \chi_i^{\dagger} \in \Omega^{0,1} \otimes \Hom(E_i, E_i), \quad \chi_i^{\dagger} \circ \mfe \in \Omega^{0,1} \otimes \Hom(E_i, E_i),
		\end{split}
	\end{equation*}
	and these are orthogonal with respect to $H$, we have $|\bar{\partial}_A^0 \chi_i| \leq |\bar{\partial}_A (\pi_i - \pi_i^{\dagger})|_H = |\bar{\partial}_A \pi_i^{\dagger}|$. Additionally, since $\bar{\partial}_A \pi_i' = -\bar{\partial}_A \chi_i = -\bar{\partial}_A^0 \chi_i$, we obtain:
	\[
	|\bar{\partial}_A \pi_i'| \leq |\bar{\partial}_A \pi_i^{\dagger}|\leq Ce^{-C'd_x}
	\]
	which proves the claim.
\end{proof}

If $\deg(E) = 0$ and $\ch_2(E) \cdot [\omega]^{n-2} = 0$, then the assumption $R^{2-n} \|F_A\|_{L^2(B_{2R})} \leq \epsilon_1$ in Proposition \ref{estimateontheporjection} is automatically satisfied.

\subsubsection{Decay Estimate on Line Bundles}
We now introduce a decomposition of the curvature. For the Hermitian bundle $(E,H)$ with Higgs bundle $(\bar{\pa}_E,\vp)$ and corresponding solution $(A,\phi)$. Let $s, s'$ be sections of $E$. We define a new approximate metric 
\begin{equation}
	\label{eq_approximate_metric}
	\begin{split}
		H_{\app}(s, s') := \sum_{i=1}^r H(\pi_i s, \pi_i s'),
	\end{split}
\end{equation}
 which is diagonal with respect to the eigenvalue decomposition. In particular, $H_{\app}|_{E_i} = H|_{E_i}$. Let $B_R$ be a ball centered at $x$.

We define another Chern connection for the metric $H_{\app}$, called the approximate Chern connection, which is given by:
\begin{equation}
	\label{eq_approximate_Chern}
	D_{\app} := \bar{\partial}_E + \vp + \sum_{i=1}^r (\partial_i+ \vp^{\dagger}_i) = \sum_{i=1}^r (\bar{\partial}_i+ \vp_i + \partial_i+ \vp^{\dagger}_i),
\end{equation}
where $\partial_i:= \pi_i' \circ \partial_E \circ \pi_i$ and $\vp^{\dagger}_i := \pi_i' \circ \vp^{\dagger} \circ \pi_i$.

The connection $\bar{\partial}_i+ \vp_i + \partial_i+ \vp^{\dagger}_i$ is a connection on $E_i$, which might not be flat but characterizes the asymptotic behavior of $D_H$. In particular, the difference between $D_H$ and $D_{\app}$ becomes small as the size of the Higgs field grows large:

\begin{proposition}
	\label{prop_approximate_Chern-estimate}
	There exist positive constants $C, C'$ such that:
	\[
	|D_H - D_{\app}| \leq C e^{-C' d_x}
	\]
	over $B_R$, where the constants depend on $\mbs_{\hat{\vp}}$ and the holomorphic structure $\bar{\partial}_E$.
\end{proposition}

\begin{proof}
	A straightforward computation yields:
	\[
	|D_H - D_{\app}| = |\partial_E - \sum_{i=1}^r \partial_i| + |\vp^{\dagger} - \sum_{i=1}^r \vp^{\dagger}_i|.
	\]
	We compute:
	\begin{equation*}
		\begin{split}
			\partial_E s &= \sum_{i=1}^r \pi_i \circ \partial_E \circ \pi_i s + \sum_{i \neq j} \pi_i \circ \partial_E \circ \pi_j s \\
			&= \sum_i \pi_i \circ \partial_E \circ \pi_i s + \sum_{i \neq j} \pi_i \circ \pi_j \circ \partial_E s + \sum_{i \neq j} \pi_i \circ (\partial_E \pi_j) \circ s.
		\end{split}
	\end{equation*}
	For the second and third terms, since $\pi_i \circ \pi_j = 0$ for $i \neq j$ and by Proposition \ref{estimateontheporjection}, $|\pi_i \circ (\partial_E \pi_j) \circ s| \leq C e^{-C' d_x} |s|$, we only need to compare the first term with $\sum_{i=1}^r \partial_i$.
	
	By Proposition \ref{estimateontheporjection} and since $\pi_i|_{E_i} = \pi_i'|_{E_i}$, we have:
	\begin{equation}
		\begin{split}
			|\pi_i \circ \partial_E \circ \pi_i s - \partial_is| &= |\pi_i \circ \partial_E \circ \pi_i s - \pi_i' \circ \partial_E \circ \pi_i s| \\
			&= |\pi_i' \circ (\partial_E \pi_i) \circ \pi_i s| \leq C e^{-C' d_x} |s|.
		\end{split}
	\end{equation}
	Therefore, we obtain $|\partial_E s - \sum_{i=1}^r \partial_is| \leq C e^{-C' d_x} |s|$.
	
	For $\vp^{\dagger} - \sum_{i=1}^r \vp^{\dagger}_i$, we write $\vp^{\dagger} = \sum_{i,j=1}^r \pi_i \circ \vp^{\dagger} \circ \pi_j$. For $i \neq j$, by Corollary \ref{corocommutevanishofpi}, we have $|\pi_i \circ \vp^{\dagger} \circ \pi_j| = |[\pi_i, \vp^{\dagger}] \circ \pi_j| \leq C e^{-C' d_x}$. For $i = j$, we compute:
	\[
	|\pi_i \circ \vp^{\dagger} \circ \pi_i - \vp^{\dagger}_i| = |\pi_i \circ \vp^{\dagger} \circ \pi_i - \pi_i' \circ \vp^{\dagger} \circ \pi_i| = |\chi_i \circ \vp^{\dagger} \circ \pi_i| \leq C e^{-C' d_x},
	\]
	where the last inequality follows from Proposition \ref{prop_estimateonchi} and Proposition \ref{Prop_higgsbundlesemisimpledecompositionestimate}. The claim follows.
\end{proof}

We now define $H_i := H_{\app}|_{E_i}$, the Chern connection on the line bundle $(E_i, \bar{\partial}_i, H_i)$, denoted $d_{A_i} := \bar{\partial}_i + \partial_i$. For any section $s$ of $E_i$, we have $\bar{\partial}_i s = \bar{\partial}_E s$ and $\partial_is = \pi_i' \circ \partial_A s$.

\begin{proposition}
	\label{prop_linebundle}
	For the approximate curvature $F_{A_i}$ on $E_i$, over $B_R$, we define $F_{A_i}^{\perp}:=F_{A_i}^{1,1}-\gamma(E)$, then we have:
	\[
	|\Lambda F_{A_i}^{\perp}| \leq C e^{-C' d_x}.
	\]
	Additionally, over $B_R$, we have:
	\begin{equation}
		\label{eq_curvature_L2_estimate_linebundle}
		\int_{B_R} |F_{A_i}|^2 \leq (1 + C e^{-C' d_x}) \int_{B_R} |F_A|^2 + C R^n e^{-C' d_x}.
	\end{equation}
	
	Moreover, if $\deg(E) = 0$ and $\ch_2(E) \cdot [\omega]^{n-2} = 0$, then $|F_{A_i}| \leq C e^{-C' d_x}$.
\end{proposition}

\begin{proof}
	Let $s$ be any section of $E_i$. We compute:
	\begin{equation}
		\begin{split}
			\bar{\partial}_i \partial_is &= \bar{\partial}_E \circ \pi_i' \circ \partial_E (\pi_i s) \\
			&= \bar{\partial}_E \circ \pi_i \circ \partial_E s + \bar{\partial}_E \circ (\pi_i' (\partial_E \pi_i)) s \\
			&= \pi_i \circ \bar{\partial}_E \partial_E s + (\bar{\partial}_E \pi_i') \circ (\partial_E \pi_i) s + \pi_i' \circ (\bar{\partial}_E \partial_E \pi_i) s - \pi_i' \circ (\partial_E \pi_i) \circ \bar{\partial}_E s,
		\end{split}
	\end{equation}
	where we use $\pi_i' \circ \pi_i = \pi_i$ and $\bar{\partial}_E \pi_i = 0$.
	
	Similarly, we have:
	\begin{equation}
		\begin{split}
			\partial_i\bar{\partial}_i s &= \pi_i' \partial_E \bar{\partial}_E s = \pi_i' \partial_E (\pi_i \bar{\partial}_E s) \\
			&= \pi_i\circ\partial_E \bar{\partial}_E s + \pi_i'\circ(\partial_E \pi_i)\circ\bar{\partial}_E s.
		\end{split}
	\end{equation}
	Therefore, we obtain:
	\[
	F_{A_i} s = (\bar{\partial}_i \partial_i+ \partial_i\bar{\partial}_i) s = \pi_i F_A s + (\bar{\partial}_E \pi_i') \circ (\partial_E \pi_i) s + \pi_i'\circ(\bar{\partial}_E \partial_E \pi_i) s.
	\]
	
	By Proposition \ref{estimateontheporjection}, we have $|(\bar{\partial}_E \pi_i') \circ (\partial_E \pi_i)| \leq C e^{-C' d_x}$. Since $\bar{\partial}_E \pi_i = 0$, we also have $\bar{\partial}_E \partial_E \pi_i = [F_A, \pi_i]=[F_A^{\perp},\pi_i]$. In addition, we have $\pi_i\Lam F_As=\pi_i\Lam F_A^{\perp} s+\gamma(E)s$. By Proposition \ref{Theoremcurvatureestimate}, $|\Lambda \bar{\partial}_E \partial_E \pi_i| \leq C e^{-C' d_x}$, which implies $|\Lambda F_{A_i}^{\perp}| \leq C e^{-C' d_x}$.
	
	By Lemma \ref{Lemma_factsabouttheseprojection}, since $|\pi_i'| = 1$ and $|\pi_i| \leq 1 + C e^{-C' d_x}$, we have:
	\[
	\int_{B_R} |F_{A_i}|^2 \leq (1 + C e^{-C' d_x}) \int_{B_R} |F_A|^2 + C R^n e^{-C' d}.
	\]
	
	Furthermore, if $\deg(E) = 0$ and $\ch_2(E) \cdot [\omega]^{n-2} = 0$, then by Proposition \ref{Theoremcurvatureestimate}, $|F_A| \leq C e^{-C' d_x}$, which implies $|F_{A_i}| \leq C e^{-C' d_x}$.
\end{proof}

\begin{subsection}{Higher Derivative Estimates}
	In this subsection, we discuss the higher derivative estimates for the connection and metrics, which generalizes slightly the construction in Mochizuki \cite[Section 2.2.5]{Mochizukiasymptotic}.
	
	\begin{lemma}
		\label{lem_construction_rankone}
		For the rank-one Higgs bundle $(E_i, \bpa_i, \vp_i)$ with induced Hermitian metric $h_i$ and connection $d_{A_i} = \pa_i + \bpa_i$, there exists a constant $\epsilon_0$ such that, over $B_R$, if 
		\[\int_{B_R} |F_{A_i}|^2 \leq \epsilon_0 \quad \text{and} \quad \| \Lambda F_{A_i} \|_{L^{\infty}} < \epsilon_0,\]
		then there exists a holomorphic frame $u_i$ (i.e., $\bpa_i u_i = 0$) such that
		\begin{equation}
			|u_i|_x = 1, \quad |\pa_i u_i| \leq C_{\epsilon_0},
		\end{equation}
		where $C_{\epsilon_0}$ is a constant that depends only on $\epsilon_0$.
		
		Moreover, if $\deg(E) = \ch_2(E) \cdot \omega^{n-2} = 0$, then 
		\begin{equation}
			|u_i|_x = 1, \quad |\pa_i u_i| \leq C e^{-C'd_x},
		\end{equation}
		for some constants $C$ and $C'$.
	\end{lemma}
	
	\begin{proof}
		By Theorem \ref{Uhlenbeckcompactnessforlocalmodel}, in the Uhlenbeck gauge, we have a trivialization $E_i \cong B_R \times \mathbb{C}$, and we can write $d_{A_i} = d + A_i$, where $A_i$ is a 1-form on $B_R$ that satisfies $\|A_i\|_{L_1^p} \leq C_{\epsilon_0}$ for any $p$. We write $d = \pa + \bpa$ and let $s_i$ be the constant frame on $B_R$, with $d s = 0$, so that $(\bpa_i + \pa_i)s_i = A_i s_i$.
		
		Since $E_i$ is a line bundle and $A_i$ is an integrable connection, we have $\bpa A_i^{0,1} = 0$. Therefore, there exists a function $\rho_i$ such that $\bpa \rho_i = A_i^{0,1}$. By the  construction of $\bar{\pa}$-Poincare Lemma, we have $\|\rho_i\|_{L_1^p} \leq C \|A_i^{0,1}\|_{L_1^p} \leq C_{\epsilon_0}$. Moreover, since $\bpa^* \rho_i = 0$, by the elliptic estimate for the operator $\bpa + \bpa^*$, we obtain $\|\rho_i\|_{L_2^p} \leq C_{\epsilon_0}$.
		
		We define the frame $u_i := e^{-\rho_i} s_i$, which satisfies $C_{\epsilon_0}^{-1} \leq |u_i| \leq C_{\epsilon_0}$. By construction, we have $\bpa_i u_i = 0$, and
		\begin{equation}
			\label{eq_linebundle_term_estimate}
			|\pa_i u_i| = |(-\pa \rho_i + A_i^{1,0}) u_i| \leq C_{\epsilon_0}.
		\end{equation}
		After normalizing the value of $u_i$ at $x$, we obtain the desired frame.
		
		When $\deg(E) = \ch_2(E) \cdot \omega^{n-2} = 0$, by Proposition \ref{prop_linebundle}, we have $|F_{A_i}| \leq C e^{-C'd_x}$, implying $\|A_i\|_{L_1^p} \leq C e^{-C'd_x}$. The proof then follows in the same way, with the constants $C_{\epsilon_0}$ replaced by $C e^{-C'd_x}$.
	\end{proof}
	
	Now, we obtain a holomorphic frame $\mathbf{u} = (u_1, \dots, u_r)$ that trivializes $E = \oplus_{i=1}^r E_i$ on $B_R$. We define an $r \times r$ Hermitian matrix $G_{ij} := H(u_i, u_j)$. By the construction of $u_i$ in Lemma \ref{lem_construction_rankone}, we have $G_{ii}|_x = 1$, $|G_{ii}| + |\log G_{ii}| \leq C_{\epsilon_0}$. By Corollary \ref{corocommutevanishofpi}, we have $|H(u_i, u_j)| \leq C e^{-C'd_x}$ for $i\neq j$, which implies $|(G^{-1})_{ii}| \leq C_{\epsilon_0}$. In addition, if $\deg(E) = \ch_2(E) \cdot \omega^{n-2} = 0$, then $|\log G_{ii}| \leq C e^{-C'd_x}$.
	
	We define $\Theta$ to be the holomorphic $\Omega_X^{1,0}$-valued $r \times r$ matrix defined by $\varphi \mathbf{u} = \mathbf{u} \cdot\Theta$. Since $\varphi = \oplus \lambda_i \mathrm{Id}_{E_i}$, we have $\Theta_{ij} = 0$ for $i \neq j$ and $\Theta_{ii} = \lambda_i$. For the trivial connection $d$, we write $d = \pa + \bpa$.
	
	\begin{lemma}
		\label{lem_estimate_for_G}
		Over $B_R$, for $i, j = 1, \dots, r$ with $i \neq j$, the Hermitian metric $G$ satisfies
		\begin{equation}
			\begin{split}
				|G_{ij}| &\leq C e^{-C'd_x}, \quad |(G^{-1})_{ij}| \leq C e^{-C'd_x}, \\
				|\pa G_{ij}| = |\bpa G_{ij}| &\leq C e^{-C'd_x}, \quad |\pa G_{ii}| = |\bpa G_{ii}| \leq C_{\epsilon_0}.
			\end{split}
		\end{equation}
		In addition, if $\deg(E) = 0$ and $\ch_2(E) \cdot [\omega]^{n-2} = 0$, then $|\pa G_{ii}| = |\bpa G_{ii}| \leq C e^{-C'd_x}$.
	\end{lemma}
	
	\begin{proof}
		By Corollary \ref{corocommutevanishofpi}, we have $|H(u_i, u_j)| \leq C e^{-C'd_x}$, which implies the estimates for $G_{ij}$ and $(G^{-1})_{ij}$. We compute
		\[
		\pa H(u_i, u_j) = H(\bpa_E u_i, u_j) + H(u_i, \pa_E u_j) = H(u_i, \pa_E u_j) = H(u_i, (\pa_E \pi_j) u_j) + H(u_i, \pi_j \pa_E u_j).
		\]
		By Proposition \ref{estimateontheporjection}, we have $|\pa_E \pi_j| \leq C e^{-C'd_x}$. In addition, for any section $s \in E_j$, we have
		\[
		|\pi_j \circ \pa_E s - \pa_j s| = |\pi_j' \circ \pi_j \circ \pa_E s - \pi_j' \circ \pa_E \circ \pi_j s| = |\pi_j' \circ (\pa_E \pi_j) s| \leq C e^{-C'd_x} |s|.
		\]
		Furthermore, by \eqref{eq_linebundle_term_estimate}, we can write $\pa_j u_j = (-\pa \rho_j + A_j^{1,0}) u_j$, with $| -\pa \rho_j + A_j^{1,0}| \leq C_{\epsilon_0}$. Therefore,
		\[
		|H(u_i, \pa_j u_j)| \leq C_{\epsilon_0} |H(u_i, u_j)| \leq C e^{-C'd_x},
		\]
		since $|H(u_i, u_j)| \leq C e^{-C'd_x}$. Similarly, we have
		\[
		|H(u_i, \pa_i u_i)| \leq C_{\epsilon_0} |H(u_i, u_i)| \leq C_{\epsilon_0}.
		\]
		If $\deg(E) = 0$ and $\ch_2(E) \cdot [\omega]^{n-2} = 0$, then $| -\pa \rho_j + A_j^{1,0}| \leq C e^{-C'd_x}$, which implies $|\pa G| = |\bpa G| \leq C e^{-C'd_x}$ by straightforward computation.
	\end{proof}
	
	Next, we provide higher derivative estimates for $G$ and $\Theta$. Let $\nabla$ denote the covariant derivative on $E$ defined by the trivial connection. Using $\nabla$, we define the Sobolev spaces
	\[L_k^p := \left\{ f \mid \sum_{i=1}^k \left( \int_{B_R} |\nabla^i f|^p \right)^{1/p} < \infty \right\}.
	\]
	Then, in the frame $\mathbf{u}$, the Hitchin-Simpson equations \eqref{Hitchin-Simpson} for $G$ and $\Theta$ can be written as
	\begin{equation}
		\label{eq_Hitchin_simpson_matrix_form}
		\Lambda(\bpa(G^{-1} \pa G) + [\Theta, \Theta^{\dagger}]) = 0, \quad \bpa \Theta = 0, \quad \Theta \wedge \Theta = 0,
	\end{equation}
	where $\Theta^{\dagger} := G^{-1} \bar{\Theta}^{\intercal} G$.
	
	Since $|\Theta| \leq C d$, from the equation $\bpa \Theta = 0$, by elliptic estimates, we have
	\begin{equation}
		\label{eq_estimate_higherderivates_Higgsfield}
		|\nabla^k \Theta| \leq C d,
	\end{equation}
	for any non-negative integer $k$.
	
	The situation we consider is the asymptotic behavior as $d$ becomes sufficiently large. To estimate $G$, we need to show that the contribution from the quadratic term $[\Theta, \Theta^{\dagger}]$ is independent of $d$. In addition, the asymptotic behavior of the diagonal and off-diagonal components of $G$ will differ, so we rewrite equation \eqref{eq_Hitchin_simpson_matrix_form} for the diagonal and off-diagonal components of $G$.
	
	To simplify notation, define
	\[P := (G^{-1} \bpa G)(G^{-1} \pa G), \quad Q := [\Theta, \Theta^{\dagger}],\]
	and $R := - (\Lambda P) - Q$. Then $G$ satisfies the equation
	\[\Lambda \bpa \pa G = G R.
	\]
	
	For each component of $G$, for $i \neq j$, we have
	\begin{equation}
		\label{eq_different_component_estimates}
		\begin{split}
			\Lambda \bpa \pa G_{ii} &= G_{ii} R_{ii} + \sum_{k \neq i} G_{ik} R_{ki}, \\
			\Lambda \bpa \pa G_{ij} &= G_{ii} R_{ij} + G_{ij} R_{jj} + \sum_{k \neq i, j} G_{ik} R_{kj}.
		\end{split}
	\end{equation}
	Here $(\cdot)_{ij}$ denotes the $(i, j)$-th component of the matrix $(\cdot)$.
	
	We make the following simple but important observation:
	
	\begin{lemma}
		\label{lem_matrix_diagonal}
		Let $M$ and $N$ be $r \times r$ matrices, and suppose $M$ is diagonal. Then $([M, N])_{ii} = 0$, and $|[M, N]| \leq 2|M| \sum_{i \neq j} |N_{ij}|$.
	\end{lemma}
	
	\begin{proof}
		The proof follows from straightforward computation.
	\end{proof}
	
	We have the following bootstrapping argument for the higher derivatives of $G$.
	
	\begin{proposition}
		\label{prop_higher_derivative_estimate_forG}
		Pver $B_R$, for any $k, p \geq 0$ and $1 \leq i \neq j \leq r$, we have
		\begin{equation}
			\label{eq_higher_derivatives_G}
			\begin{split}
				\|G_{ij}\|_{L^p_{k+1}} &\leq C e^{-C'd_x}, \quad \|G_{ii}\|_{L^p_{k+1}} \leq C_{\epsilon_0}.
			\end{split}
		\end{equation}
		In addition, suppose $\deg(E) = 0$ and $\ch_2(E) \cdot [\omega]^{n-2} = 0$. Then $|\nabla^{k+1} G| \leq C e^{-C'd_x}$.
	\end{proposition}
	
	\begin{proof}
		When $k = 0$, the estimates for any $p$ are given by Lemma \ref{lem_estimate_for_G}. Suppose the estimate \eqref{eq_higher_derivatives_G} holds for $k$. Since $\Lambda \bpa \pa$ is an elliptic operator, it suffices to estimate $\|\nabla^k \Lambda \bpa \pa G_{ii}\|_{L^p}$ and $\|\nabla^k \Lambda \bpa \pa G_{ij}\|_{L^p}$ from \eqref{eq_different_component_estimates}. The proof proceeds in the following steps.
		
		For any $p$, we first prove
		\begin{equation}
			\label{eq_estimateforP}
			\|\nabla^k P_{ii}\|_{L^p} \leq C_{\epsilon_0}, \quad \|\nabla^k P_{ij}\|_{L^p} \leq C e^{-C'd_x}.
		\end{equation}
		
		For positive integers $k_1, k_2$ with $k = k_1 + k_2$, we can write 
		\[\nabla^k P_{ii} = \sum_{k_1 + k_2 = k} \nabla^{k_1} (G^{-1} \bpa G) \otimes \nabla^{k_2} (G^{-1} \pa G).
		\]
		For any $p$, as $k_1 \leq k$ and $\|G\|_{L^{p}_{k+1}} \leq C_{\epsilon_0}$, the term $\nabla^{k_1} G^{-1} \bpa G$ involves at most $k+1$ derivatives of $G$, and by H\"older's inequality, $\|\nabla^{k_1} G^{-1} \bpa G\|_{L^p} \leq C_{\epsilon_0}$. Similarly, $\|\nabla^{k_2} (G^{-1} \pa G)\|_{L^p} \leq C_{\epsilon_0}$. Therefore, $\|\nabla^k P_{ii}\|_{L^p} \leq C_{\epsilon_0}$.
		
		For $P_{ij}$, we first write $P_{ij} = \sum_q (G^{-1} \bpa G)_{iq} (G^{-1} \pa G)_{qj}$. Then,
		\[\nabla^k P_{ij} = \sum_q \sum_{k_1 + k_2 = k} \nabla^{k_1} (G^{-1} \bpa G)_{iq} \cdot \nabla^{k_2} (G^{-1} \pa G)_{qj}.
		\]
		Since $i \neq j$, and the off-diagonal components of $G$ decay exponentially, for $q \neq i$ or $q \neq j$, and for any $p$, we have $\|\nabla^{k_1} (G^{-1} \bpa G)_{iq}\|_{L^p} \leq C e^{-C'd_x}$ and $\|\nabla^{k_2} (G^{-1} \pa G)_{qj}\|_{L^p} \leq C e^{-C'd_x}$. In addition, $\|\nabla^{k_1} (G^{-1} \bpa G)_{ii}\|_{L^p} \leq C_{\epsilon_0}$ and $\|\nabla^{k_2} (G^{-1} \pa G)_{jj}\|_{L^p} \leq C_{\epsilon_0}$. Thus, $\|\nabla^k P_{ij}\|_{L^p} \leq C e^{-C'd_x}$.
		
		Next, we prove
		\begin{equation}
			\label{eq_estimateforQ}
			Q_{ii} = 0, \quad \|\nabla^k Q_{ij}\|_{L^p} \leq C e^{-C'd_x}.
		\end{equation}
		Since $\Theta$ is diagonal, by Lemma \ref{lem_matrix_diagonal}, $Q_{ii} = 0$.
		
		We write $\nabla^k Q = \sum_{k_1 + k_2 = k} [\nabla^{k_1} \Theta, \nabla^{k_2} \Theta^{\dagger}]$. Since $\nabla^{k_1} \Theta$ is still diagonal, by Lemma \ref{lem_matrix_diagonal},
		\[|[\nabla^{k_1} \Theta, \nabla^{k_2} \Theta^{\dagger}]| \leq |\nabla^{k_1} \Theta| \cdot \sum_{i \neq j} |(\nabla^{k_2} \Theta^{\dagger})_{ij}|.
		\]
		By \eqref{eq_estimate_higherderivates_Higgsfield}, $|\nabla^{k_1} \Theta| \leq C d$. Recall that $\Theta^{\dagger} = G^{-1} \bar{\Theta}^{\intercal} G$ and that $\bar{\Theta}^{\intercal}$ is diagonal, so the off-diagonal term can be expressed as
		\[(\nabla^{k_2} \Theta^{\dagger})_{ij} = \sum_{q=1}^r \sum_{l_1 + l_2 + l_3 = k_2} \nabla^{l_1} (G^{-1})_{iq} \cdot (\nabla^{l_2} \bar{\Theta}^{\intercal}_{qq}) \cdot \nabla^{l_3} G_{qj}.
		\]
		Since $i \neq j$, and the derivatives of the off-diagonal terms of $G$ are bounded by $C e^{-C'd_x}$, by \eqref{eq_estimate_higherderivates_Higgsfield}, we obtain $|(\nabla^{k_2} \Theta^{\dagger})_{ij}| \leq C d e^{-C'd_x}$. Therefore, $\|\nabla^k Q_{ij}\|_{L^p} \leq C e^{-C'd_x}$.
		
		By equations \eqref{eq_estimateforP} and \eqref{eq_estimateforQ}, we conclude that
		\[\|\nabla^k \Lambda \bpa \pa G_{ii}\|_{L^p} \leq C_{\epsilon_0}, \quad \|\nabla^k \Lambda \bpa \pa G_{ij}\|_{L^p} \leq C e^{-C'd_x}.
		\]
		By elliptic regularity, we obtain $\|G_{ij}\|_{L^p_{k+2}} \leq C e^{-C'd_x}$ and $\|G_{ii}\|_{L^p_{k+2}} \leq C_{\epsilon_0}$, completing the induction.
		
		When $\deg(E) = 0$ and $\ch_2(E) \cdot [\omega]^{n-2} = 0$, every estimate with bound $C_{\epsilon_0}$ can be replaced by $C e^{-C'd_x}$. Therefore, we conclude $|\nabla^{k+1} G| \leq C e^{-C'd_x}$ for $k \geq 0$.
	\end{proof}
	
	Since $|\nabla_A^k F_A|$ can be expressed in terms of derivatives of $G$, we obtain the following higher derivative estimates for the curvature. By \eqref{eq_curvature_L2_estimate_linebundle}, when $d_x$ is sufficiently large, the assumptions in Lemma \ref{lem_construction_rankone}, Lemma \ref{lem_estimate_for_G}, and Proposition \ref{prop_higher_derivative_estimate_forG} are satisfied, and we conclude the following higher derivative estimates:
	
	\begin{corollary}
		\label{cor_higher_derivatives_curvature}
		For each $k$, we have
		\[|\nabla_A^k F_A| \leq C_{\epsilon_0}.
		\]
		Suppose $\deg(E) = 0$ and $\ch_2(E) \cdot [\omega]^{n-2} = 0$. Then 
		\[|\nabla_A^k F_A| \leq C e^{-C'd_x}.
		\]
	\end{corollary}
\end{subsection}
	\end{section}
	
\begin{section}{Compactness for Sequences of Solutions}
	In this section, we prove a compactness theorem for sequences of solutions to the Hitchin-Simpson equations. Compactness theorems for similar equations have been proved by Taubes \cite{taubes2013compactness} over real 3- and 4-manifolds and by Mochizuki for Riemann surfaces \cite{Mochizukiasymptotic}.
	
	\begin{subsection}{Uhlenbeck Compactness}
		We first introduce the Uhlenbeck compactness theorem for integrable connections with suitable bounds. Let $R_0$ be the injectivity radius of $X$. Using the Uhlenbeck patching argument \cite{donaldsonkronheimer1986geometry}, we have the following compactness statement:
		
		\begin{theorem}[\cite{uhlenbeck1982connections,uhlenbeck1986aprior}]
			\label{Uhlenbeckcompactness}
			Let $U$ be a K\"ahler manifold (not necessarily compact) and let $E \to U$ be a Hermitian vector bundle with metric $H$. For any $p > n$, let $A_j$ be a sequence of integrable unitary connections on $E \to U$ such that $\|F_{A_j}\|_{L^2(U)}$ and $\|\Lambda F_{A_j}\|_{L^{\infty}(U)}$ are uniformly bounded. Then, after passing to a subsequence, there exist:
			\begin{itemize}
				\item[(i)] a closed subset $Z_{\Uh} \subset U$ with Hausdorff codimension at least 4,
				\item[(ii)] a smooth Hermitian vector bundle $(E_{\infty}, H_{\infty})$ defined on $U \setminus Z_{\Uh}$, which can be identified with $(E, H)$ over $U \setminus Z_{\Uh}$,
				\item[(iii)] for any compact subset $W \subset U \setminus Z_{\Uh}$, there exist $L^p_2$ isometries $g_i: (E_{\infty}, H_{\infty})|_W \to (E, H)|_W$ such that $g_i(A_i)$ converges weakly to $A_{\infty}$ in the $L_1^p$ topology, where $A_{\infty}$ satisfies $\int_U |F_{A_{\infty}}|^2 < \infty$,
				\item[(iv)] the singular set $Z_{\Uh}$ can be described as
				\begin{equation}
					\label{eq_singularity_Uhlenbeck}
					Z_{\Uh} =\bigcap_{0<R<R_0} \left\{ x \in U : \liminf_{i \to \infty} R^{4 - 2n} \int_{B_R(x)} |F_{A_i}|^2 dV \geq \epsilon_0 \right\},
				\end{equation}
				where $\epsilon_0$ depends only on the geometry of $U$ and the bounds on $\|F_{A_i}\|_{L^2(U)}$ and $\|\Lambda F_{A_i}\|_{L^{\infty}(U)}$.
			\end{itemize}
		\end{theorem}
		
		For the limiting connection $A_{\infty}$, we have the following theorem, which is a combination of results from \cite{BandoSiu1994,daskalopoulos2004convergence}; see also \cite{sibley2015asymptotics}.
		\begin{proposition}
			\label{Thm_curvatureconvergencewithderivativeestimate}
			Under the previous assumptions, suppose $\|d_{A_j} \Lambda F_{A_j}\|_{L^2} \to 0$. Then $(A_{\infty}, \phi_{\infty})$ is a Hermitian-Yang-Mills connection. Moreover, $\Lambda F_{A_j} \to \Lambda F_{A_{\infty}}$ in $L^p_{\text{loc}}(U \setminus Z_{\Uh})$. Additionally, over $U \setminus Z_{\Uh}$, there is a holomorphic, orthogonal splitting
			\[(E_{\infty}, H_{\infty}, \nabla_{A_{\infty}}) = \bigoplus_{i=1}^l (E_{\infty,i}, h_{\infty,i}, \nabla_{A_{\infty},i}).\]
		\end{proposition}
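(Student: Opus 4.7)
The plan is to combine the gauge-theoretic convergence provided by Theorem~\ref{Uhlenbeckcompactness} with the extra hypothesis $\|d_{A_j}\Lam F_{A_j}\|_{L^2}\to 0$, and then exploit the elementary fact that a $d_{A_\infty}$-parallel self-adjoint endomorphism orthogonally splits the bundle into its eigen-subbundles. I work on a fixed compact subset $W\subset U\setminus \ZU$ on which the Uhlenbeck patching gauges are defined; since the statement is only claimed on $U\setminus\ZU$, nothing needs to be said across the singular set.

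By Theorem~\ref{Uhlenbeckcompactness}, after passing to a subsequence and applying gauges $g_j:(E_\infty,H_\infty)|_W\to (E,H)|_W$, the connections $g_j(A_j)$ converge weakly to $A_\infty$ in $L^p_{1,\loc}$ for some fixed $p>2n$. Rellich-Kondrachov promotes this to strong $L^p_{\loc}$ convergence of the connection $1$-forms, and since $F_A=dA+A\we A$ is polynomial in $A$, this yields $F_{A_j}\to F_{A_\infty}$ in $L^{p/2}_{\loc}$; combined with the integrability condition $F_A^{0,2}=0$ and the $L^\infty$ bound on $\Lam F$, a standard Coulomb-gauge elliptic bootstrap smooths $A_\infty$ and upgrades the convergence $\Lam F_{A_j}\to \Lam F_{A_\infty}$ to any $L^p_{\loc}$, which is assertion (2). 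For any compactly supported smooth test form $\eta$ on $U\setminus\ZU$, integration by parts gives
$$\int\lan d_{A_j}\Lam F_{A_j},\eta\ran=\int\lan\Lam F_{A_j},d_{A_j}^{\st}\eta\ran;$$
the left side tends to $0$ by hypothesis while the right side converges to $\int\lan\Lam F_{A_\infty},d_{A_\infty}^{\st}\eta\ran$ by the $L^p_{\loc}$ convergence established above. Hence $d_{A_\infty}(\Lam F_{A_\infty})=0$ distributionally, and then smoothly by elliptic regularity.

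The endomorphism $s:=i\Lam F_{A_\infty}$ is $H_\infty$-self-adjoint and satisfies $d_{A_\infty}s=0$, so $s$ is $A_\infty$-parallel and its eigenvalues are locally constant. Letting $\lambda_1,\dots,\lambda_l$ denote the distinct eigenvalues on each component and $E_{\infty,i}$ the corresponding spectral image with $h_{\infty,i}:=H_\infty|_{E_{\infty,i}}$ and induced connection $\na_{A_\infty,i}$, the spectral projectors are $\na_{A_\infty}$-parallel and $H_\infty$-orthogonal, so $E_\infty=\oplus_{i=1}^l E_{\infty,i}$ is a $\na_{A_\infty}$-parallel orthogonal splitting. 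Since $F_{A_\infty}^{0,2}=0$ the splitting is holomorphic with respect to $\bpa_{A_\infty}$, and on each factor $i\Lam F_{A_\infty,i}=\lambda_i\Id$, i.e.\ the HYM condition; this gives (1) and (3).

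The main technical obstacle is upgrading the mere weak $L^p_{1,\loc}$ convergence produced by the Uhlenbeck gauges to the strong curvature convergence needed to pass to the limit in the distributional identity above: $F_A$ is nonlinear in $A$, so one needs either the Rellich-Kondrachov argument plus the integrability equation, or the Coulomb-gauge elliptic bootstrap of Bando-Siu and Daskalopoulos-Wentworth that is already implicit in the statement of Theorem~\ref{Uhlenbeckcompactness}. Once that is in place, the weak identity $d_{A_\infty}(\Lam F_{A_\infty})=0$ and the resulting parallel-endomorphism decomposition are classical.
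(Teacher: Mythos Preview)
The paper does not give its own proof of this proposition; it is stated as ``a combination of statements in \cite{BandoSiu1994,daskalopoulos2004convergence}, see also \cite{sibley2015asymptotics}'' and left at that. Your argument is essentially the standard one found in those references: pass to the limit in the weak identity $\int\lan\Lam F_{A_j},d_{A_j}^{\st}\eta\ran\to 0$ to conclude $d_{A_\infty}(\Lam F_{A_\infty})=0$, then use that a parallel self-adjoint endomorphism has locally constant eigenvalues and $\na_{A_\infty}$-parallel spectral projectors to produce the orthogonal holomorphic splitting. This is correct and matches the approach of Daskalopoulos--Wentworth.

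Two minor remarks. First, your interpolation step for the $L^p_{\loc}$ convergence of $\Lam F_{A_j}$ is cleanest phrased as: the curvature converges in $L^{p/2}_{\loc}$ from the $L^p_{1}$ convergence of the connection forms, hence $\Lam F_{A_j}\to\Lam F_{A_\infty}$ in $L^{p/2}_{\loc}$, and then the uniform $L^\infty$ bound on $\Lam F_{A_j}$ from Theorem~\ref{Uhlenbeckcompactness} upgrades this to any $L^q_{\loc}$ by interpolation; invoking the Coulomb-gauge bootstrap here is unnecessary for that particular step. Second, the sentence ``then smoothly by elliptic regularity'' for $d_{A_\infty}(\Lam F_{A_\infty})=0$ is slightly imprecise since $d_{A_\infty}$ on $0$-forms is overdetermined rather than elliptic; what you really use is that $A_\infty$ (hence $\Lam F_{A_\infty}$) is already smooth after the Bando--Siu/Uhlenbeck gauge-fixing, so the weak equation is automatically classical. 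Neither point affects the validity of the argument.
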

	\end{subsection}

\begin{subsection}{Compactness Theorem for Bounded Sequences}
	We now state our convergence theorem for solutions with bounded $L^2$ norms of Higgs fields. The following result is well-known and follows by applying Uhlenbeck's compactness theorem and standard elliptic theory.
	
	\begin{theorem}
		\label{Thm_Uhlenbeckbubbleboundedsequence}
		Let $(E, H)$ be a Hermitian bundle, and let $(A_i, \phi_i = \varphi_i + \varphi_i^{\dagger})$ be a sequence of solutions to the Hitchin-Simpson equations \eqref{Hitchin-Simpson}. Suppose that $\|\phi_i\|_{L^2(X)} \leq C$. Then there exist a closed singular set $Z_{\Uh}$ of codimension 4, defined by
		\begin{equation}
			Z_{\Uh} := \bigcap_{0<R<R_0} \left\{ x \in X : \liminf_{i \to \infty} R^{4 - 2n} \int_{B_R(x)} |F_{A_i}|^2 dV \geq \epsilon_0 \right\},
		\end{equation}
		with finite Hausdorff measure, a smooth vector bundle $(E_{\infty}, H_{\infty})$, and a pair $(A_{\infty}, \phi_{\infty})$ such that the following holds:
		\begin{itemize}
			\item[(i)] After passing to a subsequence, over every compact subset $W \subset X \setminus Z_{\Uh}$, there exist unitary gauge transformations $g_i: (E, H)|_W \to (E_{\infty}, H_{\infty})|_W$ such that $g_i(A_i, \phi_i)$ converges smoothly to $(A_{\infty}, \phi_{\infty})$.
			\item[(ii)] $(E_{\infty}, H_{\infty}, A_{\infty}, \phi_{\infty})$ satisfies the Hitchin-Simpson equations over $X \setminus Z_{\Uh}$.
		\end{itemize}
	\end{theorem}
	
	\begin{proof}
		By Proposition \ref{eigenvalueestimate}, we obtain that $|\varphi_i|_h \leq C$. Since $A_i, \phi_i$ satisfy the Hitchin-Simpson equation $i \Lambda (F_{A_i} + [\varphi_i, \varphi_i^{\dagger}]) = 0$, we have $|\Lambda F_{A_i}|_{\MC^0} \leq C$. Applying Uhlenbeck's compactness theorem \ref{Uhlenbeckcompactness}, we obtain a closed set $Z_{\Uh}$ such that $g_i^* A_i \rightharpoonup A_{\infty}$ in $L^p_{1, \text{loc}}$ over $X \setminus Z_{\Uh}$. For simplicity, we omit the unitary gauge $g_i$ and still write $(A_i, \phi_i)$ to represent $(g_i(A_i), g_i(\phi_i))$. We define $a_i := \bar{\partial}_{A_i} - \bar{\partial}_{A_{\infty}}$, then $\bar{\partial}_{A_{\infty}}(\varphi_i) = \bar{\partial}_{A_i} \varphi_i + [a_i, \varphi_i]$. Since $a_i \rightharpoonup 0$ in $L^p_{1, \text{loc}}$, we have strong convergence in $L^p_{\text{loc}}$. In addition, since $\varphi_i \in C^0$, we obtain $\varphi_i \in L^p_{1, \text{loc}}$. As $H_{\infty}$ is a fixed smooth metric, we also have $\varphi_i^{\dagger} \in L^p_{1, \text{loc}}$. Thus, there exist $\varphi_{\infty}, \varphi_{\infty}^{\dagger}$ such that $\phi_i = \varphi_i + \varphi_i^{\dagger}$ converges to $\phi_{\infty} := \varphi_{\infty} + \varphi_{\infty}^{\dagger}$. By a standard bootstrapping argument for elliptic equations, we obtain smooth convergence.
		
		Furthermore, since $(E, H, A_i, \phi_i)$ satisfies \eqref{Hitchin-Simpson} and the convergence is in $L^p_{1, \text{loc}}$, we conclude that $(E_{\infty}, H_{\infty}, A_{\infty}, \phi_{\infty})$ also satisfies the Hitchin-Simpson equations over $X \setminus Z_{\Uh}$.
	\end{proof}
	
	The following description of the limiting solution $(E_{\infty}, H_{\infty}, A_{\infty}, \phi_{\infty})$ is due to Chen \cite{chen2024vafawitten}, where the bubbling set itself has a complex analytic structure. The proof is identical, so we will only state the result.
	
	\begin{proposition}[\cite{chen2024vafawitten}, Proposition 5.6]
		The set $Z_{\Uh}$ is a real subvariety of $X$ with codimension at least four. In addition, $A_{\infty}$ defines a unique reflexive sheaf $\mathcal{E}_{\infty}$ over $X$, and $\phi_{\infty}$ extends to a global section of $\Omega_X^{1,0}(\End(\mathcal{E}_{\infty}))$. Moreover, $Z_{\Uh}$ admits a decomposition
		\[
		Z_{\Uh} = \bigcup_k Z_k \cup \mathrm{Sing}(\mathcal{E}_{\infty}),
		\]
		where $Z_k$ are the irreducible pure codimension 4 components of $Z_{\Uh}$. As a sequence of currents, we have
		\[
		\operatorname{Tr}(F_{A_i} \wedge F_{A_i}) \to \operatorname{Tr}(F_{A_{\infty}} \wedge F_{A_{\infty}}) + 8\pi^2 \sum_k m_k Z_k,
		\]
		where $m_k$ are non-negative integers.
	\end{proposition}
	
\end{subsection}
\begin{subsection}{Estimate for Sequences of Solutions}
	Let $(E,H)$ be a Hermitian vector bundle, let $(A_i,\phi_i)$ be a sequence of solutions to \eqref{Hitchin-Simpson}, where $r_i:=\|\phi_i\|_{L^2(X)}$ and $\hat{\phi}_i:=r_i^{-1}\phi_i=\hvp_i+\hvp_i^{\dagger}$. We denote $\mbs_i=(p_1(\hvp_i),\cdots,p_r(\hvp_i))$, with the discriminant locus $Z_i$. By Theorem \ref{Theorem_convergencespectralcover}, there exists $\mbs_{\infty}$ such that $\mbs_i$ converges smoothly to $\mbs_{\infty}$ in $\MB_X$, and $Z_i$ converges to $Z_{\infty}$, where $Z_{\infty}$ is the discriminant locus of $\mbs_{\infty}$. 
	
	For each $x\in X\setminus Z_{\infty}$, for sufficiently large $i$, we have $x\notin Z_i$. Let $\lambda_1^i, \dots, \lambda_r^i$ be the $r$ distinct eigenvalues of $\phi_i$, and define $d_{x,i}:=\min_{k\neq l}|\lambda_k^i(x)-\lambda_l^i(x)|$ as the minimal difference between eigenvalues at $x$. Let $M_{x,i}=\max_{1\leq k\leq r}|\lambda_k^i(x)|$, and define the radius $R_{x,i}:=\min\{R_0, \frac{d_{x,i}}{100C\|\phi_i\|_{L^2(X)}}\}$, as given in Proposition \ref{Prop_higgsbundlesemisimpledecompositionestimate}. Then the following holds:
	
	\begin{proposition}
		\label{Prop_uniformbound}
		For each $x\in X\setminus Z_{\infty}$, there exist constants $i_0, R_{x,\infty}, d_{x,\infty}$ such that for all $i\geq i_0$, we have $R_{x,i}\geq R_x$ and $r_i^{-1}d_{x,i}\geq d_{x,\infty}$.
	\end{proposition}
	
	\begin{proof}
		Let $d_{x,\infty}$ be the minimal difference between the eigenvalues of $\phi_{\infty}$ at $x$. As $\mbs_i$ converges to $\mbs_{\infty}$, we have $\lim_{i\to\infty}r_i^{-1}d_{x,i}=d_{x,\infty}$. Since $R_{x,i}:=\min\{R_0, \frac{d_{x,i}}{100C\|\phi_i\|_{L^2(X)}}\}$, the bound for $R_{x,i}$ follows immediately.
	\end{proof}
	
	\begin{proposition}
		\label{Thm_citeconvergence}
		For the sequence $(A_i,\phi_i)$, let $x\in X\setminus Z_{\infty}$. There exist constants $i_0, d_{x,\infty}, R_{x,\infty}$, which depend only on $\mbs_{\infty}$, such that for all $i \geq i_0$ and over the ball $B_{R_{x,\infty}}$, the following estimates hold:
		\begin{equation*}
			\begin{split}
				&|\sqrt{-1}\Lambda F_{A_i}|\leq Ce^{-C'r_i d_{x,\infty}},\quad |[\hvp_i,\hvp_i^{\dagger}]|\leq Ce^{-C'r_i d_{x,\infty}},\\
				&\|d_{A_i}^{\ast}F_{A_i}\|_{L^p}\leq Cr_i e^{-C'r_i d_{x,\infty}}\|\nabla_{A_i}\hvp_i\|_{L^p}.
			\end{split}
		\end{equation*}
	\end{proposition}
	
	\begin{proof}
		This is a combination of Proposition \ref{Theoremcurvatureestimate}, Proposition \ref{Prop_curvaturederivativeestimates}, and Proposition \ref{Prop_uniformbound}.
	\end{proof}
	
	Define $Z_{\Uh}$ to be the Uhlenbeck singularity set determined by the connections $A_i$. For $x\in X\setminus (Z_{\infty}\cup Z_{\Uh})$, by Corollary \ref{cor_higher_derivatives_curvature}, we obtain the following curvature estimates:
	
	\begin{proposition}
		\label{prop_higher_derivative_curvature}
		There exist constants $i_0, d_{x,\infty}, R_{x,\infty}$, depending only on $\mbs_{\infty}$, such that over $B_{R_{x,\infty}}$, for each $k$, we have
		\begin{equation}
			|\nabla_{A_i}^k F_{A_i}|\leq C.
		\end{equation}
		Moreover, if $\deg(E_i)=0$ and $\ch_2(E_i).[\omega]^{n-2}=0$, then 
		\begin{equation}
			|\nabla_{A_i}^k F_{A_i}|\leq Ce^{-C' d_{x,\infty} r_i}.
		\end{equation}
	\end{proposition}
\end{subsection}
	\subsection{Compactness for Unbounded Sequences}
	We study the behavior of solutions to the Hitchin-Simpson equations with unbounded $L^2$ norms for the Higgs fields. Let $(A_i,\phi_i=\vp_i+\vp_i^{\dagger})$ be a solution to \eqref{Hitchin-Simpson}, with $r_i := \|\phi_i\|_{L^2(X)}$ and $\lim r_i = \infty$. 
	
	\begin{lemma}
		We have the following estimates:
		\label{Lemma_localL2controlofthecurvature}
		\begin{itemize}
			\item [(i)] $\lim_{i \to \infty} \int_X |\hat{\phi}_i \wedge \hat{\phi}_i|^2 = 0$, and $\lim_{i \to \infty} \int_X |d_{A_i}\hat{\phi}_i|^2 = 0$.
			\item [(ii)] For any subset $B \subset X$, we have $\int_B |F_{A_i}|^2 \leq \int_B |[\vp_i, \vp_i^{\dagger}]|^2 + C$.
		\end{itemize}
	\end{lemma}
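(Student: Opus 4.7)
The plan is to derive both parts from two central identities: the global energy identity of Proposition \ref{energyidentity} --- asserting that $\MRE(A_i,\phi_i) = \int_X|F_{A_i}+[\vp_i,\vp_i^{\da}]|^2 + \int_X|d_{A_i}\phi_i|^2$ equals a topological constant $C_{\mathrm{top}}$ independent of $i$ --- and the Bochner--Weitzenb\"ock identity of Lemma \ref{lemmalocalestimate}(iii). From the energy identity, $\int_X|d_{A_i}\phi_i|^2 \leq C_{\mathrm{top}}$, so after rescaling by $r_i^{-1}$,
\[
\int_X|d_{A_i}\hat\phi_i|^2 = r_i^{-2}\int_X|d_{A_i}\phi_i|^2 \leq C_{\mathrm{top}}\,r_i^{-2}\to 0,
\]
which handles the second limit of (i).

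For the first limit of (i), I would observe that $\vp_i\wedge\vp_i = 0$ (one of the Hitchin--Simpson equations) and, by taking Hermitian adjoints, $\vp_i^{\da}\wedge\vp_i^{\da} = 0$; hence $\hat\phi_i\wedge\hat\phi_i = [\hvp_i,\hvp_i^{\da}]$ and $\int_X|\hat\phi_i\wedge\hat\phi_i|^2 = r_i^{-4}\int_X|[\vp_i,\vp_i^{\da}]|^2$. Integrating Lemma \ref{lemmalocalestimate}(iii) over the closed manifold $X$, the $\Delta|\phi_i|^2$ term vanishes and I obtain
\[
\int_X|\na_{A_i}\phi_i|^2+\int_X|[\vp_i,\vp_i^{\da}]|^2 = -\int_X\Ric(\phi_i,\phi_i)\leq C\int_X|\phi_i|^2 = Cr_i^2,
\]
with $C$ depending only on the sup-norm of the Ricci tensor of $X$. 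Both summands on the left being non-negative forces $\int_X|[\vp_i,\vp_i^{\da}]|^2 \leq Cr_i^2$, and therefore $r_i^{-4}\int_X|[\vp_i,\vp_i^{\da}]|^2 \leq Cr_i^{-2}\to 0$.

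For part (ii), the energy identity restricted to any $B\subset X$ gives $\int_B|F_{A_i}+[\vp_i,\vp_i^{\da}]|^2 \leq C_{\mathrm{top}}$. The pointwise triangle inequality $|F_{A_i}|^2 \leq 2|F_{A_i}+[\vp_i,\vp_i^{\da}]|^2+2|[\vp_i,\vp_i^{\da}]|^2$, integrated over $B$, yields $\int_B|F_{A_i}|^2 \leq 2C_{\mathrm{top}}+2\int_B|[\vp_i,\vp_i^{\da}]|^2$, which is the claimed inequality once the multiplicative factor $2$ is absorbed into the additive constant $C$ following the paper's convention.

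The main obstacle is conceptual rather than technical: the naive $L^{\infty}$ estimate $\|\vp_i\|_{\infty}\leq Cr_i$ from Proposition \ref{Prop_L2controlLinfty} gives only $\int_X|[\vp_i,\vp_i^{\da}]|^2 \leq Cr_i^4$, so the $r_i^{-4}$-rescaling would produce merely a bounded (not vanishing) quantity. The essential point is that the Bochner--Weitzenb\"ock identity supplies the sharper $O(r_i^2)$ bound, because the only unfavorable term is the Ricci one and it contributes at most $\|\phi_i\|_{L^2(X)}^2 = r_i^2$. Once this is recognized, the first limit in (i) reduces to a one-line computation, and no appeal to the exponential decay estimates of Theorem \ref{Thm_citeconvergence} or to the structure of the discriminant locus is necessary.
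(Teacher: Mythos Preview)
Your proof is correct and matches the paper's approach exactly: integrate the Weitzenb\"ock identity \eqref{Eq_integralbyparts} over $X$ for the first limit in (i), and use the energy identity of Proposition \ref{energyidentity} for the second limit and for (ii). One quibble: in (ii) the factor $2$ in front of $\int_B|[\vp_i,\vp_i^{\da}]|^2$ cannot literally be absorbed into the \emph{additive} constant, but the resulting inequality $\int_B|F_{A_i}|^2\leq 2\int_B|[\vp_i,\vp_i^{\da}]|^2+C$ is precisely what the paper's own argument yields and is all that is needed downstream.
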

	
	\begin{proof}
		By integrating \eqref{Eq_integralbyparts}, we obtain the bound $r_i^2 \int_X |\hat{\phi}_i \wedge \hat{\phi}_i|^2 \leq C$. In addition, using Proposition \ref{energyidentity}, we have the estimate $\int_X |F_{A_i} + [\vp_i, \vp_i^{\dagger}]|^2 + |d_{A_i} \phi_i|^2 \leq C$, and the result follows immediately.
	\end{proof}
	
	Next, we define the following adiabatic Hitchin-Simpson equations, which have been studied on Riemann surfaces in \cite{mazzeo2016ends, mazzeo2012limiting, fredrickson2018generic}. Over the Hermitian vector bundle $(E, H)$, we say that $(A, \phi = \vp + \vp^{\dagger})$ satisfies the \emph{adiabatic Hitchin-Simpson equations} if:
	\begin{equation}
		\label{adabatic-Hitchin-Simpson}
		\begin{split}
			&F_A^{(0,2)} = 0,\quad \vp \wedge \vp = 0,\quad \bar{\pa}_A \vp = 0,\quad \pa_A \vp = 0, \\
			&\sqrt{-1}\Lambda F_A^{\perp} = 0,\quad [\vp, \vp^{\dagger}] = 0,\quad \|\vp\|_{L^2(X)} = 1,
		\end{split}
	\end{equation}
	where $F_A^{\perp} := F_A^{1,1} - \gamma(E)\Id$. 
	
	\subsubsection{Main result} We now state the main theorem of this subsection.
	
	\begin{theorem}
		\label{Thm_mainconvergenceunboundedsequences}
		Let $(E, H)$ be a Hermitian vector bundle over $X$, and let $(A_i, \phi_i = \vp_i + \vp_i^{\dagger})$ be a sequence of solutions to \eqref{Hitchin-Simpson}, with $r_i := \|\phi_i\|_{L^2(X)}$ and $\hat{\phi}_i = r_i^{-1} \phi_i = \hat{\vp}_i + \hat{\vp}_i^{\dagger}$.
		
		Let $\MA_X$ be the Hitchin base, and $\kappa : \MMH \to \MB_X$ the Hitchin morphism and $\mbs_i:=\kappa(\hvp_i)\in \MB_X$, then
		\begin{enumerate}[label=(\roman*)]
			\item $\mbs_i$ converges smoothly to $\mbs_{\infty}\in \MB_X$.
			\item Suppose the discriminant locus $\Zta$ of the spectral cover defined by $\kappa_{\infty}$ satisfies $\Zta \subsetneq X$. Then there exists a Hausdorff codimension 4 set $Z_{\Uh} \subset X \setminus \Zta$, and a Hermitian bundle $(E_{\infty}, H_{\infty})$ over $X \setminus (Z_{\Uh} \cup \Zta)$ such that:
			\begin{enumerate}
				\item There exists a pair $(A_{\infty}, \phi_{\infty})$ over $(E_{\infty}, H_{\infty})$ such that, after passing to a subsequence and up to unitary gauge transformations, $(A_i, \phi_i)$ converges to $(A_{\infty}, \phi_{\infty})$ in the $\MC^{\infty}_{\loc}$ topology.
				\item $(A_{\infty}, \phi_{\infty})$ satisfies the adiabatic Hitchin-Simpson equations \eqref{adabatic-Hitchin-Simpson}.
				\item If $\deg(E) = 0$ and $\ch_2(E)[\omega]^{n-2} = 0$, then for any open set $U \subset X \setminus (Z_{\Uh} \cup \Zta)$, there exists a constant $d_U>0$ depends on $\mbs_{\infty}$ and $U$ such that 
				\begin{equation}
					\|(A_i, \phi_i) - (A_{\infty}, \phi_{\infty})\|_{\MC^k} \leq Ce^{-C' d_U r_i}.
				\end{equation}
			\end{enumerate}
		\end{enumerate}
	\end{theorem}
	
	\begin{proof}
		\textbf{Step 1: Convergence of the spectral cover.}
		
		We begin by proving the first part. Recall that for a given Higgs bundle $(\ME_i, \vp_i)$, the Hitchin morphism is $\mbs_i := \kappa(\hat{\vp}_i) = (p_1(\hat{\vp}_i), \cdots, p_r(\hat{\vp}_i)) \in \oplus_{k=1}^r H^0(\Sym^k \Omega_X^{1,0})$, where $p_k$ are invariant polynomials. By Theorem \ref{Theorem_convergencespectralcover}, after passing to a subsequence, $\kappa(\hat{\vp}_i)$ converges smoothly to $\mbs_{\infty} = (p_{1,\infty}, \cdots, p_{r,\infty})$ in $\MB_X$. The spectral cover defined by $\mbs_{\infty}$ is
		\[
		S_{\infty} := \{\lambda \in \Omega_X^{1,0} \mid \lambda^r + p_{1,\infty} \lambda^{r-1} + \cdots + p_{r,\infty} = 0\}.
		\]
		We denote by $\Zta$ the discriminant locus of $S_{\infty}$.
		
		\textbf{Step 2: Convergence of the curvature.}
		
		By Proposition \ref{Thm_citeconvergence}, for $x\in X \setminus \Zta$, there exist $i_0$, $R_{\infty}$, and $d_{x,\infty}$ such that for all $i \geq i_0$, over $B_{R_{\infty}}(x)$ we have:
		\begin{equation*}
			\begin{split}
				&|\sqrt{-1}\Lambda F_{A_i}^{\perp}| \leq Ce^{-C' r_i d_{x,\infty}},\quad |[\hat{\vp}_i, \hat{\vp}_i^{\dagger}]| \leq Ce^{-C' r_i d_{x,\infty}}, \\
				&\|d_{A_i}^{\ast} F_{A_i}\|_{L^p} \leq Ce^{-C' r_i d_{x,\infty}} \|\nabla_{A_i} \hat{\phi}_i\|_{L^p}.
			\end{split}
		\end{equation*}
		
		Additionally, over $B_{R_{\infty}}(x)$, by Lemma \ref{Lemma_localL2controlofthecurvature}, we have $\int_{B_{R_{\infty}}} |F_{A_i}|^2 \leq C$. Define the Uhlenbeck singularity set as
		\[
		Z_{\Uh} := \bigcap_{0 < R} \left\{x \in X \setminus \Zta \mid \liminf_{i \to \infty} r^{4 - 2n} \int_{B_R(x)} |F_{A_i}|^2 dV \geq \epsilon_0 \right\}.
		\]
		Then, by Theorem \ref{Uhlenbeckcompactness}, over $X \setminus (Z_{\Uh} \cup \Zta)$, there exists a connection $A_{\infty}$ such that, after passing to a subsequence, $A_i$ converges weakly in $L_{1,\loc}^p$ to $A_{\infty}$ up to a unitary gauge transformation. By Proposition \ref{prop_higher_derivative_curvature}, over $B_{R_{\infty}}(x)$ we have $|\nabla_{A_i}^k F_{A_i}| \leq C$. By standard bootstrapping estimates for connections, $A_i$ converges to $A_{\infty}$ in the $\MC^{\infty}_{\loc}$ sense.
		
		\textbf{Step 3: Convergence of the Higgs fields.}
		
		By Proposition \ref{Prop_L2controlLinfty}, for the rescaled Higgs field $\hat{\vp}_i$, we have $\|\hat{\vp}_i\|_{L^{\infty}(X)} \leq C$. Let $a_i = A_i - A_{\infty}$. Since $\bar{\pa}_{A_i} \hat{\vp}_i = 0$, we have $\bar{\pa}_{A_{\infty}} \hat{\vp}_i = a_i \hat{\vp}_i$. As $a_i$ converges weakly to $a_{\infty}$ in $C^{\infty}_{\loc}$, applying bootstrapping to $\hat{\vp}_i$, we conclude that there exists $\hat{\vp}_{\infty}$ such that $\hat{\vp}_i$ converges weakly in $\MC^{\infty}_{\loc}$ to $\hat{\vp}_{\infty}$. Define $\phi_{\infty} = \hat{\vp}_{\infty} + \hat{\vp}_{\infty}^{\dagger}$.
		
		\textbf{Step 4: The limiting pair $(A_{\infty}, \phi_{\infty})$ satisfies the adiabatic Hitchin-Simpson equations.}
		
		By Proposition \ref{Thm_citeconvergence} and Proposition \ref{Thm_curvatureconvergencewithderivativeestimate}, we have $\Lambda F_{A_{\infty}}^{\perp} = 0$. By Lemma \ref{Lemma_localL2controlofthecurvature}, we obtain $\phi_{\infty} \wedge \phi_{\infty} = 0$ and $d_{A_{\infty}} \phi_{\infty} = 0$, which is equivalent to the adiabatic Hitchin-Simpson equations \eqref{adabatic-Hitchin-Simpson}.
	\end{proof}
	
	In the case where $\rank(E) = 2$, we can remove the assumption that $Z_T \subsetneq X$ in Theorem \ref{Thm_mainconvergenceunboundedsequences}.
	
	\begin{proposition}
		Under the assumptions of Theorem \ref{Thm_mainconvergenceunboundedsequences}, suppose $\rank(E) = 2$. Then $\Zan$ can be chosen to have codimension at least 2.
	\end{proposition}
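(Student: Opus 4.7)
The plan is to reduce to the $\mathrm{SL}(2,\mathbb{C})$ situation, where Proposition~\ref{Prop_discriminantlocus} already supplies the codimension bound. Set $\vp_i^0 := \vp_i - \tfrac{1}{2}\mathrm{tr}(\vp_i)\mathrm{Id}_E$. Since $\mathrm{tr}(\vp_i)$ is a scalar holomorphic $(1,0)$-form, one checks $\bar\pa_{A_i}\vp_i^0 = 0$; using the anti-commutativity $\tau\we\vp + \vp\we\tau = 0$ of a scalar $1$-form $\tau$ with a matrix-valued $1$-form $\vp$ gives $\vp_i^0\we\vp_i^0 = \vp_i\we\vp_i = 0$; and trivially $[\vp_i^0,(\vp_i^0)^{\da}] = [\vp_i,\vp_i^{\da}]$. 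Hence $(A_i,\vp_i^0+(\vp_i^0)^{\da})$ is again a solution to \eqref{Hitchin-Simpson} with the same $A_i$ and $H$. Moreover the discriminant is unchanged: using $\det(\vp_i - c\mathrm{Id}) = \det\vp_i - c\,\mathrm{tr}\,\vp_i + c^2$ with $c = \tfrac{1}{2}\mathrm{tr}(\vp_i)$, one has
\begin{equation*}
\Delta_{\vp_i} \;=\; p_1(\vp_i)^2 - 4p_2(\vp_i) \;=\; -4\det(\vp_i^0),
\end{equation*}
so the discriminant locus -- and thus $\Zan$ for the original sequence -- depends only on the trace-free part.

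Next I claim that, up to a choice of normalization, the rescaled trace-free limit determinant is a nonzero holomorphic section of $S^2\Omega_X^{1,0}$. Put $s_i := \|\vp_i^0\|_{L^2(X)}$. If $s_i \to \infty$, apply Theorem~\ref{Theorem_convergencespectrumcover}(ii) to the unit-norm trace-free sequence $\tilde{\vp}_i := \vp_i^0/s_i$: since $p_1(\tilde{\vp}_i)\equiv 0$ and the full tuple limit is nonzero, one gets $p_2(\tilde{\vp}_i) = \det(\tilde{\vp}_i) \to \alpha_2$ smoothly with $\alpha_2 \in H^0(X,S^2\Omega_X^{1,0})$ nonzero. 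If instead $s_i$ remains bounded along a subsequence, Theorem~\ref{Thm_Uhlenbeckbubbleboundedsequence} applied directly to $(A_i,\phi_i^0)$ produces a smooth limit on $X\setminus Z_{\Uh}$; the Bando-Siu extension across the codimension-$6$ subset of $Z_{\Uh}$ promotes $\det\vp_\infty^0$ to a global holomorphic section $\alpha_2 \in H^0(X,S^2\Omega_X^{1,0})$, which is nonzero whenever the limit is not nilpotent on an open set.

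Choosing $\Zan := \alpha_2^{-1}(0)$ and applying Proposition~\ref{Prop_discriminantlocus} with $r=2$, $s_1=0$, $\Theta = (0,\alpha_2)\neq 0$ then gives complex codimension $\geq 1$, i.e.\ real codimension $\geq 2$, as desired. The main obstacle is precisely the selection of the correct ``limit spectrum'': in the default $r_i$-rescaling of Theorem~\ref{Thm_mainconvergenceunboundedsequences} the discriminant $\Delta_{\hat{\vp}_i} = -4\det(\vp_i^0)/r_i^2$ can vanish identically in the limit when the trace dominates (so that $s_i/r_i\to 0$), and the resulting $\Zan$ is all of $X$. The content of the word ``chosen'' in the statement is exactly the freedom, exclusive to rank $2$, to replace the normalization $r_i$ by the trace-free norm $s_i$ (or to pass to the bounded-sequence regime for the trace-free part): because $\mathrm{SL}(2,\mathbb{C})$-spectra are controlled by the single invariant $\det$, Theorem~\ref{Theorem_convergencespectrumcover}(ii)(b) forces that invariant to survive in the limit, and Proposition~\ref{Prop_discriminantlocus} delivers the codimension bound cleanly.
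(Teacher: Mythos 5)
Your reduction to the trace-free part is exactly the paper's strategy: the paper sets $\beta_i=\vp_i-\frac{\Tr(\vp_i)}{2}\Id_E$, observes that $(A_i,\beta_i+\beta_i^{\da})$ is again a solution of \eqref{Hitchin-Simpson}, and uses that the discriminant only sees the trace-free determinant; your identity $\Delta_{\vp_i}=-4\det(\vp_i^0)$ and the verification that the modified pair still solves the equations are correct. Your unbounded case is also sound: applying Theorem \ref{Theorem_convergencespectrumcover}(ii)(b) to the trace-free sequence normalized by its own $L^2$ norm forces $\alpha_2\neq 0$, and Proposition \ref{Prop_discriminantlocus} then gives real codimension at least $2$. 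Your case split on the boundedness of $s_i=\|\vp_i^0\|_{L^2}$ differs slightly from the paper's (which splits on whether $p_{1,\infty}$ and then $p_{2,\infty}$ vanish), but in the unbounded regime the two are interchangeable.

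The gap is in your bounded case, and it is precisely the situation your concluding sentence does not cover. If $s_i$ stays bounded and $\det\vp_i^0\to 0$ identically --- which occurs, for example, whenever each $\vp_i$ is a scalar plus a nilpotent, so $\det\vp_i^0\equiv 0$ for every $i$ --- then $\alpha_2=0$ and your prescription $\Zan:=\alpha_2^{-1}(0)$ returns all of $X$, not a codimension-$2$ set; the hedge ``nonzero whenever the limit is not nilpotent on an open set'' flags this but does not resolve it, and Proposition \ref{Prop_discriminantlocus} cannot be applied with $\Theta=0$. The paper's resolution in this degenerate case is different: the eigenvalues of $\vp_i^0$ are controlled by $\det\vp_i^0$, so Proposition \ref{eigenvalueestimate} bounds $\|\vp_i^0\|_{L^{\infty}}$, one takes $\Zan=\emptyset$ and invokes the bounded-sequence compactness Theorem \ref{Thm_Uhlenbeckbubbleboundedsequence} for the trace-free part, the rescaled limit being $\vp_{\infty}=\frac{1}{2}p_{1,\infty}\Id_E$ since $r_i^{-1}\|\vp_i^0\|\to 0$. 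You should also add one line, in every case, explaining why the conclusions of Theorem \ref{Thm_mainconvergenceunboundedsequences} for the \emph{original} sequence hold off your new $\Zan$: write $\hat\vp_i=\hat\vp_i^0+\frac{1}{2}\Tr(\hat\vp_i)\Id_E$, note that the trace part converges smoothly on all of $X$ as scalar holomorphic $1$-forms, and that $\hat\vp_i^0=(s_i/r_i)\cdot(\vp_i^0/s_i)$ converges because $s_i/r_i\in[0,1]$ subconverges; this is the step the paper records as ``convergence of $r_i^{-1}\vp_i$ follows by the convergence of $\beta_i$ and the convergence of $\Tr(\vp_i)$.''
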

	
	\begin{proof}
		Let $(A_i, \phi_i = \vp_i + \vp_i^{\dagger})$ be a sequence of solutions with unbounded $r_i = \|\phi_i\|_{L^2(X)}$. We have $p_1(\hat{\vp}_i) = \Tr(\hat{\vp}_i) \to p_{1,\infty}$ and $p_2(\hat{\vp}_i) = \Tr(\hat{\vp}_i^2) \to p_{2,\infty}$.
		
		If $p_{1,\infty} = 0$, then by Proposition \ref{Prop_discriminantlocus}, the locus $\Zta$ has codimension at least 2. If $p_{1,\infty} \neq 0$, define $\beta_i = \vp_i - \frac{\Tr(\vp_i)}{2} \Id_E$ and consider the new sequence $(A_i, \beta_i + \beta_i^{\dagger})$, where $p_{1,\infty} = 0$. If $p_{2,\infty} \neq 0$, we set $\Zan := p_{2,\infty}^{-1}(0)$. The convergence of $r_i^{-1} \vp_i$ follows from the convergence of $\beta_i$ and $\Tr(\vp_i)$.
		
		When $p_{2,\infty} = 0$, we have the uniform bound $\|\beta_i\|_{L^{\infty}(X)} \leq C$. In this case, we can set $\Zan = \emptyset$ and apply Theorem \ref{Thm_Uhlenbeckbubbleboundedsequence}. As $\lim r_i \to \infty$, we obtain $\lim_{i \to \infty} r_i^{-1} \|\beta_i\| = 0$. Therefore, the limiting Higgs field in this case is $\vp_{\infty} = \Tr(\vp_{\infty}) \frac{\Id}{2}$.
	\end{proof}
	
	\begin{remark}
		When $\dim(X) = 4$, over a closed K\"ahler manifold, it was shown in \cite{tanaka2019singular} that the Hitchin-Simpson equations coincide with the Kapustin-Witten equations. For rank two traceless Higgs bundles, our theorem will be covered by Taubes' compactness theorem \cite{taubes2013compactness} for the Kapustin-Witten equations. Additionally, \cite{tanaka2019singular} shows that for rank two traceless Higgs bundles, the set $\Zta$ defined in Theorem \ref{Thm_mainconvergenceunboundedsequences} is nearly the same as the singular set found by Taubes in \cite{taubes2013compactness}, which justifies our use of the label "$T$" for the singular set $\Zta$.
	\end{remark}

\subsubsection{Uniqueness conjecture for canonical non-compact sequencs.}
Given a stable, generically semi-simple Higgs bundle $(\bar{\pa}_E, \vp)$, let $t \in \mathbb{R}$ be a real parameter. Then $(\bar{\pa}_E, t\vp)$ forms a non-compact family of stable Higgs bundles. If we denote by $(A_t, \phi_t)$ the corresponding family of solutions to the Hitchin-Simpson equations, this gives rise to a natural non-compact family with $r_t := \|\phi_t\|_{L^2} \to \infty$ when $t\to \infty$. By Theorem \ref{Thm_mainconvergenceunboundedsequences}, there exists a singular set $Z$ and a subsequence $t_i \to \infty$ such that, over $X \setminus Z$, the limit $\lim_{i \to \infty}(A_{t_i}, r_{t_i}^{-1} \phi_{t_i})$ converges to a solution satisfying the adiabatic Hitchin-Simpson equations \eqref{adabatic-Hitchin-Simpson}. However, it is not known whether this limit is independent of the choice of subsequences $t_i$.

The uniqueness of this limit plays an important role in understanding the asymptotic geometry of the moduli space of Higgs bundles, as discussed in \cite{mazzeo2019asymptotic, fredrickson2019exponential}. When $X$ is a Riemann surface, let $S_{\vp}$ denote the spectral variety of $\vp$. It was shown in \cite{mazzeo2012limiting, mochizuki2023asymptotic, fredrickson2018generic} that if the spectral variety $S_{\vp}$ is smooth, the limit as $t \to \infty$ is unique. However, when $S_{\vp}$ is not smooth, recent work by Na \cite{na2022limiting} shows that the limit may depend on the choice of subsequences.

We propose the following conjecture regarding the uniqueness of the limiting configuration over general K\"ahler manifolds, which will be of interest in further studies of the asymptotic geometry of the moduli space of Higgs bundles.

\begin{conjecture}
	Let $S_{\vp}$ be the spectral variety of $\vp$. If $S_{\vp}$ is smooth, then the limit of $(A_t, \phi_t)$ as $t \to \infty$ is independent of the choice of subsequences.
\end{conjecture}

	\end{section}

\section{Applications}
In this section, we discuss several applications of the compactness results for the Hitchin-Simpson equations. First, we introduce the generalized Hitchin WKB problem. Then, we explore the deformation problem of $\ZT$-harmonic 1-forms.
\begin{subsection}{Generalized Hitchin's WKB Problem}
	Let $X$ be a K\"ahler manifold (not necessarily compact), and let $(E, \bar{\pa}, t\vp)$ be a family of stable Higgs bundles, with $\deg(E) = 0$ and $\ch_2(E)[\omega]^{n-2} = 0$. Denote by $(A_t, \phi_t)$ the corresponding solutions to the Hitchin-Simpson equations, and let $D_t = d_{A_t} + \phi_t$ be the associated flat connection.
	
	When $X$ is a Riemann surface, Hitchin's WKB problem was originally introduced by Katzarkov-Noll-Pandit-Simpson in \cite{katzarkov2015harmonic}, and it was solved by Mochizuki in \cite{Mochizukiasymptotic} for the Riemann surface case. For the family of connections $D_t$, Hitchin's WKB problem seeks to understand the asymptotic behavior of the monodromy metric defined by $D_t$.
	
	Specifically, let $\gamma: [0, 1] \to X$ be a smooth embedded path. Denote by $P_t$ the parallel transport operator between $\gamma(0)$ and $\gamma(1)$. Let $H_{0,t}$ and $H_{1,t}$ be the harmonic metrics at $\gamma(0)$ and $\gamma(1)$, respectively. Denote by $P_t^{\ast} H_{1,t}$ the pullback metric under the parallel transport. The aim is to understand the following question. Since the original question for the Riemann surface case is called Hitchin's WKB problem, we refer to the following asymptotic problem as the generalized Hitchin's WKB problem.
	
	\begin{question}[Generalized Hitchin's WKB problem]
		\label{question_WKB}
		What is the asymptotic behavior of $P_t$ as $t \to \infty$? More precisely, what is the asymptotic behavior of the distance between $H_{0,t}$ and $P_t^{\ast} H_{1,t}$?
	\end{question}
	
	To address this, we first introduce some background on the distance between metrics. Let $V$ be a rank-$r$ vector space, and let $H_0, H_1$ be two Hermitian metrics on $V$. We choose an orthonormal basis $e_1, \dots, e_r$ for $V$ with respect to both metrics. Define $\alpha_i := \log|e_i|_{H_0} - \log|e_i|_{H_1}$, and the distance between the two Hermitian metrics is given by $\vec{d}(H_0, H_1) := (\alpha_1, \dots, \alpha_r)$. After reordering, we assume that $\alpha_1 \geq \alpha_2 \geq \dots \geq \alpha_r$.
	
	Now, let $(V_0, H_0)$ and $(V_1, H_1)$ be two Hermitian vector spaces, and let $P: V_0 \to V_1$ be an isomorphism. The operator norm of $P$ is defined as $\|P\|_{\Op} := \sup_{v_0 \in V_0} \left\{ \frac{|Pv_0|_{H_1}}{|v_0|_{H_0}} \right\}$. The following lemma compares the distance between $H_0$ and $P^{\ast} H_1$:
	
	\begin{lemma}[{\cite[Section 2.3.1]{Mochizukiasymptotic}}]
		\label{Lem_affinematrix}
		We can choose orthonormal bases $(e_1, \dots, e_r)$ for $V_0$ and $(f_1, \dots, f_r)$ for $V_1$ such that:
		\begin{enumerate}
			\item $P(e_i) = e^{\alpha_i} f_i$, where the $\alpha_i$ are real numbers ordered as $\alpha_1 \geq \alpha_2 \geq \dots \geq \alpha_r$,
			\item $\vec{d}(H_0, P^{\ast} H_1) = (\alpha_1, \dots, \alpha_r)$,
			\item Let $\Lambda^k P: \Lambda^k V_1 \to \Lambda^k V_2$ be the induced morphism. Then for any $k$, we have:
			\[
			\alpha_1 = \log \|P\|_{\Op}, \quad \alpha_k = \log \|\Lambda^k P\|_{\Op} - \log \|\Lambda^{k-1} P\|_{\Op}.
			\]
		\end{enumerate}
	\end{lemma} 
\end{subsection}
\begin{subsection}{The Asymptotic Behavior of the Parallel Transport}
	Before introducing the main theorem, we present a singular perturbation result developed by Mochizuki \cite{Mochizukiasymptotic}. Let $E$ be a vector bundle with a frame $\mbe$ over $[0,1]$, and let $A_0, A_1$ be diagonal matrices with smooth functions defined over $[0,1]$, while $B$ is an off-diagonal matrix over $[0,1]$. Let $\|\cdot\|_{\MC^0}$ denote the $\MC^0$ norm of a matrix. We then have the following proposition:
	
	\begin{proposition}[{\cite[Prop 2.18, Cor 2.19]{Mochizukiasymptotic}}]
		\label{singularperturbation}
		For $t>0$, let $D_t$ be a family of connections over $E$. Suppose that, in the frame $\mbe$, we can express $D_t \mbe = \mbe(tA_0 + A_1 + B)ds$. Then there exists a constant $\epsilon_0$ depending on $\|A_0\|_{\MC^0}$ and $\|A_1\|_{\MC^0}$, but independent of $t$, such that if $\|B\|_{\MC^0} \leq \epsilon_0$, the following hold:
		\begin{itemize}
			\item [(i)] There exist a diagonal matrix $K_t$ and an off-diagonal matrix $G_t$ such that under the frame $\mathbf{f} := \mbe(I + G_t)^{-1}$, we have $D_t\mathbf{f} = \mathbf{f}(tA_0 + A_1 + K_t)$.
			\item [(ii)] The matrices $K_t$ and $G_t$ satisfy the estimate:
			\begin{equation*}
				\|G_t\|_{\MC^0} + \|K_t\|_{\MC^0} + \|\partial_s G_t+[tA_0+A_1,G_t]\|_{\MC^0} \leq C\|B\|_{\MC^0}.
			\end{equation*}
		\end{itemize}
	\end{proposition}
	
	\begin{definition}
		A smooth embedded path $\gamma: [0,1] \to X$ is called non-critical if:
		\begin{itemize}
			\item [(i)] There exists a tubular neighborhood $U$ of $\gamma$ such that $(E, \bar{\pa}_E, \vp)|_{U} = \oplus_{i=1}^r (E_i, \bar{\pa}_i, \vp_i)|_U$ with $\rank(E_i) = 1$.
			\item [(ii)] For any point $s \in [0,1]$, we have $\gamma^{\ast} \Re(\vp_i)|_s \neq \gamma^{\ast} \Re(\vp_j)|_s$ for any $i \neq j$.
		\end{itemize}
	\end{definition}
	
	Let $\gamma$ be a non-critical path. Since $\gamma^{\ast}\Re(\vp_i)$ is a real 1-form on $[0,1]$, we define the number:
	\[
	\alpha_i := -\int_0^1 \gamma^{\ast} \Re(\vp_i) ds.
	\]
	Denote by $P_t: E_{\gamma(0)} \to E_{\gamma(1)}$ the parallel transport defined by $D_t$ along $\gamma$. Let $H_0 := H|_{E_{\gamma(0)}}$ and $H_1 := H|_{E_{\gamma(1)}}$ be the induced metrics on $E_{\gamma(0)}$ and $E_{\gamma(1)}$, respectively. Denote by $P^{\ast} H_1$ the pullback metric on $E_{\gamma(0)}$ under the parallel transport $P_t$. We then have the following generalized asymptotic theorem, which answers the generalized Hitchin's WKB problem stated in Question \eqref{question_WKB}.
	
	\begin{theorem}
		There exist positive constants $C$, $C'$, and $t_0$ depending on $(\bar{\pa}_E, \vp)$ such that for any non-critical path $\gamma$, there exists a constant $d$ depends on $\gamma$ such that
		\begin{equation}
			\left|\frac{1}{t} \vec{d}(H_0, P_t^{\ast} H_1) - (2\alpha_1, \dots, 2\alpha_r)\right| \leq Ce^{-C't d}
		\end{equation}
		for all $t \geq t_0$.
	\end{theorem}
	
	\begin{proof}
		For any non-critical path $\gamma$, since it does not pass through any discriminant locus, we can find:
		\begin{itemize}
			\item [(i)] A finite number of points $x_1, \dots, x_k$ with $x_i = \gamma(s_i)$ and $0 = s_1 \leq s_2 \leq \cdots \leq s_k = 1$.
			\item [(ii)] Open neighborhoods $U_i$ and constants $d_{x_i}$ such that, over $U_i$, the estimates from Section \ref{section_localestimate} hold for the Higgs bundle $(\bar{\pa}_E, \vp)$. We choose $d=\min_i d_{x_i}$.
		\end{itemize}
		
		Let $U = \cup_{i=1}^k U_i$. Then over $U$, by the non-criticality assumption of the path, we have the decomposition $(E, \bar{\pa}_E, t\vp) = \oplus_{i=1}^k (E_i, \bar{\pa}_i, t\vp_i)$. For the harmonic metric $H_t$, we define the approximate metric $H_t^0$ (defined in \eqref{eq_approximate_metric}), which is diagonal with respect to the decomposition, and $H_t^0|_{E_i} = H_t|_{E_i}$. By Proposition \ref{prop_approximate_Chern-estimate}, for the approximate Chern connection $D_t^0=d_{A_t^0}+t\phi_t^0$ defined in \eqref{eq_approximate_Chern}) be associated with the diagonal metric $H_t^0$, we have:
		\[
		|D_t - D_t^0|_{H_t} \leq Ce^{-C' t d}.
		\]
		
		Consider the restriction of the vector bundles, metrics, and connections over $\gamma(s)$, and write $\gamma^{\ast} (E,\bar{\pa}_E) = \oplus_{i=1}^r (\gamma^{\ast} E_i,\bar{\pa}_i|_{\gamma}$) with the induced metric $\gamma^{\ast} H$ and connections $\gamma^{\ast} D_t$ and $\gamma^{\ast} D_t^0$. We choose a frame $\mathbf{e} = (e_1, \dots, e_r)$ at $\gamma^{\ast} (E|_{\gamma(0)})$. As the unitary connection $d_{A_t^0}$ preserves the decomposition, we extend the frame $\mathbf{e}$ over $\gamma$ using the parallel transport defined by $d_{A_t^0}$. The frame $\mbe$ is normal with respect to $H_t^0$ and $H_t$, and orthogonal with respect to $H_t^0$, but it is not orthogonal with respect to $H_t$. However, by Corollary \ref{corocommutevanishofpi}, we have $|H_t(e_i, e_j)| \leq Ce^{-C't d}$ for $i\neq j$.
		
		By the definition of $D_t^0$ and the frame $\mfe$, there exists a diagonal matrix $M$ such that, in the frame $\mathbf{e}$, we have:
		\[
		D_t^0 \mathbf{e} = t \mathbf{e} M ds,
		\]
		where
		\[
		M ds = \gamma^{\ast} (\vp + \bar{\vp}) = \begin{pmatrix}
			2\gamma^{\ast} \Re (\vp_1) & 0 & \cdots & 0 \\
			0 & 2\gamma^{\ast} \Re (\vp_2) & \cdots & \cdots \\
			\vdots & \vdots & \vdots & \vdots \\
			0 & \cdots & 0 & 2\gamma^{\ast} \Re (\vp_r)
		\end{pmatrix} .
		\]
		
		Furthermore, we can write:
		\[
		\gamma^{\ast} D_t \mbe = \mbe (tM + B_1 + B_2) ds,
		\]
		where $B_1$ is diagonal (i.e., $(B_1)_{ij} = 0$ for $i \neq j$) and $B_2$ is off-diagonal (i.e., $(B_2)_{ii} = 0$). By Proposition \ref{prop_approximate_Chern-estimate}, we have $|B_1| + |B_2| \leq Ce^{-C't d}$.
		
		For large $t$, by the singular perturbation result (Theorem \ref{singularperturbation}), there exist diagonal matrix $K_t$ and off-diagonal matrix $G_t$ defined on $\gamma$ such that: under the frame $\mathbf{f} := \mbe(I + G_t)^{-1}$, we could write $$\gamma^{\ast} D_t \mathbf{f} = \mathbf{f}(tM + B_1 + K_t) ds$$ with $|G_t|+|K_t|_{\MC^0}\leq Ce^{-C't d}$.
		
		Thus, the parallel transport matrix $P_t$ of $\gamma^{\ast} D_t$ from $E_{\gamma(0)}$ to $E_{\gamma(1)}$ under the frames $\mbe(0)$ and $\mbe(1)$ can be written as:
		\[
		P_t := (\Id + G_t(1)) e^{-\int_0^1 (tM + B_1 + K_t) ds} (\Id + G_t(0))^{-1}.
		\]
		
		Let $\mathbf{g}(s) = (\mathbf{g}_1(s), \dots, \mathbf{g}_r(s))$ be the orthonormal frames of $E|_{\gamma(s)}$ constructed using the Gram-Schmidt process for the frame $\mbe$. Then we can write $\mathbf{g}(s) = \mbe(s)(\Id + R_t(s))$ with $|R_t(s)| \leq Ce^{-C't d}$. The parallel transport matrix $P_t$ under the frames $\mathbf{g}(0)$ and $\mathbf{g}(1)$ can then be expressed as:
		\[
		Z_{\gamma} := (\Id + L_t(1)) e^{-\int_0^1 (tM + B_1 + K_t) ds} (\Id + L_t(0))^{-1},
		\]
		where $L_t(s):=(\Id+R_t(s))(\Id+G_t(s))-\Id$.
		
		Since $tM + B_1 + K_t$ is diagonal with $|B_1|+|K_t|\leq Ce^{-C'td}$, we have:
		\[
		\left|\int_0^1 (tM + B_1 + K_t) ds - 2t \diag(\alpha_1, \dots, \alpha_r)\right| \leq Ce^{-C't d}.
		\]
		
		Recall that for a rank-$r$ matrix $Z$, $\|Z\|_{\Op} := \sup \{|Zv| \mid v \in \mathbb{C}^r, |v| = 1\}$. We have:
		\begin{equation}
			\log \|Z_{\gamma}\|_{\Op} \leq \log \|e^{-\int_0^1 (tM + B_1 + K_t) ds}\|_{\Op} + \mathcal{O}(e^{-C't d}),
		\end{equation}
		where $\mathcal{O}(e^{-C't d})$ denotes an error term controlled by $Ce^{-C't d}$.
		
		Since $\alpha_1 \geq \alpha_2 \geq \cdots \geq \alpha_r$, we have:
		\begin{equation}
			\log \|e^{-\int_0^1 (tM + B_1 + K_t) ds}\|_{\Op} \leq 2t\alpha_1 + \mathcal{O}(e^{-C't d}).
		\end{equation}
		
		On the other hand, we have:
		\begin{equation}
			|P_t \mathbf{g}_1(0)|_{H_{\gamma(1)}} = e^{2t\alpha_1}(1 + \mathcal{O}(e^{-C't d})).
		\end{equation}
		
		Thus, we conclude:
		\begin{equation}
			|\log \|P_t\|_{\Op} - 2t\alpha_1| \leq Ce^{-C't d}.
		\end{equation}
		
		Applying the same argument to $\Lambda^k P_t$, we obtain:
		\begin{equation}
			|\log \|\Lambda^k P_t\|_{\Op} - 2t \sum_{i=1}^k \alpha_i| \leq Ce^{-C't d}.
		\end{equation}
		
		By Lemma \ref{Lem_affinematrix}, the claim follows.
	\end{proof}
\end{subsection}
\subsection{Deformation Problem of the $\ZT$ Harmonic 1-Form}
In this section, we study the rank two version of the Hitchin-Simpson equations and its relationship with the previous work of Taubes \cite{Taubes20133manifoldcompactness,taubes2013compactness}. Moreover, we will discuss the deformation problem of $\ZT$ harmonic 1-forms.

Let $\MMH^2$ be the moduli space of rank two traceless Higgs bundles (see \eqref{eq_moduli_ranktwo_traceless}), and let $\MB_X^2$ be the rank two spectral base (see \eqref{eq_spectral_base_Ranktwo}). We now focus on the compactness theorem for sequences of solutions of rank two traceless Higgs bundles.

We begin by considering a solution to the adiabatic Hitchin-Simpson equations \eqref{adabatic-Hitchin-Simpson}. The following discussion is based on Taubes \cite[Section 7a]{Taubes20133manifoldcompactness}, which we briefly review here.

Let $Z \subset X$ be a Hausdorff codimension-four set, and let $(E,H)$ be a rank two Hermitian vector bundle on $X \setminus Z$. Suppose $(A, \phi = \vp + \vp^{\dagger})$ is a solution to \eqref{adabatic-Hitchin-Simpson}. The equation satisfied by $(A, \phi)$ can be rewritten as:
\begin{equation}
	\Lambda F_A^{\perp} = 0, \quad F_A^{0,2} = 0, \quad \phi \wedge \phi = 0, \quad d_A \phi = 0, \quad d_A^* \phi = 0, \quad \|\phi\|_{L^2} = 1.
\end{equation}

Let $\mfg_E$ be the adjoint bundle of $(E,H)$. Take $x\in X \setminus Z$ with a small open neighborhood $U_p \subset X \setminus Z$. Choose $\sigma \in \mfg_E$ such that $|\sigma|_H = 1$. After applying a unitary gauge, the condition $\phi \wedge \phi = 0$ implies that $\phi = v \otimes \sigma$. The equations $d_A \phi = d_A^* \phi = 0$ further imply:
\begin{equation}
	\label{eq_adiabatic_intermedia_step}
	dv \otimes \sigma + v \otimes d_A \sigma = 0, \quad d^* v \otimes \sigma + *v \otimes d_A \sigma = 0.
\end{equation}
Since $|\sigma|_H = 1$ and $d_A$ is a unitary connection, we have $\langle \sigma, d_A \sigma \rangle = 0$. Taking the inner product of \eqref{eq_adiabatic_intermedia_step} with $\sigma$ and using $|\sigma|_H = 1$, we conclude:
\[
dv = d^* v = 0.
\]
However, there is ambiguity in the sign. Since $\sigma$ is globally defined on $X \setminus Z$ only up to a $\pm$ sign, $v$ is also defined globally on $X \setminus Z$ up to a $\pm$ sign. Therefore, we can write $\phi = (\pm v) \otimes (\pm \sigma)$.

Next, consider the Hitchin morphism for $(A, \phi)$. For $\vp = \phi^{1,0} = v^{1,0} \otimes \sigma$, we have $\kappa(\vp) = \Tr(\vp^2) = 2v^{1,0} \otimes v^{1,0}$, and $s_v := \bar{\pa}(v^{1,0} \otimes v^{1,0}) = 0$ over $X \setminus Z$. Since $\|v\|_{L^2} = \|\phi\|_{L^2} = 1$, $s_v$ extends to a holomorphic section on $X$, i.e., $s_v \in H^0(\Sym^2 \Omega_X^1)$. Thus, we obtain $\bv := \pm v$, which is a $\ZT$ harmonic 1-form.

The compactness theorem for rank two Higgs bundles could be stated as the follows:
\begin{theorem}
	\label{Thm_compactnessrank2}
	Let $(E,H)$ be a rank 2 Hermitian vector bundle over $X$, and let $(A_i, \phi_i = \vp_i + \vp_i^{\dagger})$ be a sequence of solutions to the Hitchin-Simpson equations with $\Tr(\phi_i) = 0$ and $r_i := \|\phi_i\|_{L^2(X)}$ unbounded. Then, passing to a subsequence, we have:
	\begin{enumerate}[label=(\roman*)]
		\item There exists $s_{\infty} \in \MB_X^2$ such that $\det(r_i^{-1} \vp_i)$ converges to $s_{\infty}$ in the $\MC^{\infty}(X)$ topology.
		\item The following holds:
		\begin{enumerate}
			\item $Z_{\Uh}$ is a Hausdorff codimension-four set, and $\Zta$ is a codimension-two complex analytic subset.
			\item There exists a Hermitian vector bundle $(E_{\infty}, H_{\infty})$ and a Hermitian-Yang-Mills connection $A_{\infty}$ with $\phi_{\infty}$.
			\item There exists a sequence of isomorphisms $g_i: \Aut(E_{\infty}, H_{\infty}) \to \Aut(E,H)$ such that the sequence $g_i^{\ast}(A_i, \phi_i)$ converges in the $\MC^{\infty}_{\loc}$ topology to $(A_{\infty}, \phi_{\infty})$ over $X \setminus (Z_{\Ta} \cup Z_{\Uh})$. Moreover, $\kappa(\phi_{\infty}^{1,0}) = s_{\infty}$.
		\end{enumerate}
	\end{enumerate}
\end{theorem}

\begin{proof}
	All statements follow directly from Theorem \ref{Thm_mainconvergenceunboundedsequences} and the previous discussions.
\end{proof}

Let $\bv$ be a $\ZT$ harmonic 1-form over $X$ with $s_{\bv} = \bv^{1,0} \otimes \bv^{1,0}\in \MB_X^2$ being the corresponding element in the spectral base. Let $(A_i, \phi_i)$ be a sequence of solutions to the Hitchin-Simpson equations. We say $(A_i, \phi_i)$ converges to $\bv$ if $\lim_{i \to \infty}(A_i, \phi_i)$ exists, and under the Hitchin map, $\frac{1}{\|\phi_i\|_{L^2}^2} \lim_{i \to \infty} \kappa(\phi_i) = C s_{\bv}$ for some constant $C$. A $\ZT$ harmonic 1-form $\bv$ is said to be deformable if there exists a sequence $(A_i, \phi_i)$ such that the limit exists and converges to $\bv$ in the above sense.

\begin{question}[Deformation Problem]
	Which $\ZT$ harmonic 1-forms over $X$ can be deformed into a sequence of solutions to the Hitchin-Simpson equations?
\end{question}

With the help of the Hitchin morphism, using Theorem \ref{Thm_compactnessrank2} and the construction of the Hitchin section (Theorem \ref{thm_heliu_surjective_Hitchin_morphism}), we can fully answer the deformation problem of $\ZT$ harmonic 1-forms over a K\"ahler manifold.

\begin{theorem}
	\label{thm_deformation_ZT_harmonic_form}
	Every $\ZT$ harmonic 1-form over a K\"ahler manifold can be deformed into a sequence of solutions to the Hitchin-Simpson equations that converge to $\bv$.
\end{theorem}

\begin{proof}
	Given any $\ZT$ harmonic 1-form $\bv = \pm v$, let $s_{\bv} = 2v^{1,0} \otimes v^{1,0} \in \MB_X^2$ be the corresponding rank-one symmetric differential. By Theorem \ref{thm_heliu_surjective_Hitchin_morphism}, there exists a polystable Higgs bundle $(\bar{\pa}_E, \vp)$ such that $\kappa(\vp) = \Tr(\vp^2) = s_{\bv}$. Without loss of generality, assume $\|\vp\|_{L^2} = 1$. For a real parameter $t$, $(\bar{\pa}_E, t \vp)$ is a family of polystable Higgs bundles. By Theorem \ref{thm_nonabelian_Hogde}, there exists a family $(A_t, \phi_t)$ of solutions to the Hitchin-Simpson equations \eqref{Hitchin-Simpson} such that $[(d_{A_t}^{0,1}, \phi_t^{1,0})] = [(\bar{\pa}_E, t \vp)]\in \MMH$. Let $r_t := \|\phi_t\|_{L^2}$. As $\lim_{t \to \infty} \|\kappa(\vp)\|_{L^2} = \lim_{t \to \infty} t^2 \|s_{\bv}\|^2 = \infty$, $r_t$ is unbounded. By Theorem \ref{Thm_compactnessrank2}, there exists a subsequence $t_i$ such that $\lim_{i \to \infty}(A_{t_i}, \phi_{t_i})$ exists. By definition, we have $\kappa(\phi_t^{1,0}) = \kappa(t \vp) = t^2 s_{\bv}$. By Corollary \ref{Cor_RelationshipL2pk}, for $\|\phi_t\|_{L^2}$, we have $C^{-1} t \leq \|\phi_t\|_{L^2} \leq C(t+1)$, which implies:
	\[
	\lim_{t_{i} \to \infty} \frac{1}{\|\phi_{t_i}\|_{L^2(X)}^2} \kappa(\phi_{t_i}^{1,0}) = C s_{\bv},
	\]
	completing the proof.
\end{proof}

Let $X$ be a closed 4-manifold, and let $(E,H)$ be a Hermitian vector bundle. Since $** = 1$, the Hodge star operator $*: \Omega^2 \to \Omega^2$ defines a decomposition $\Omega^2 = \Omega^{2,+} \oplus \Omega^{2,-}$, where $\Omega^{2,+}$ are the self-dual two-forms and $\Omega^{2,-}$ are the anti-self-dual two-forms. 

The Kapustin-Witten equations, introduced in \cite{KapustinWitten2006}, for a pair $(A, \phi)$ can be written as:
\begin{equation}
	\label{eq_KWequation}
	\begin{split}
		(F_A^{\perp} - \phi \wedge \phi)^+ = 0, \quad (d_A \phi)^- = 0, \quad d_A * \phi = 0,
	\end{split}
\end{equation}
which can be regarded as a generalization of the Yang-Mills equations. It was proved by Y. Tanaka \cite{tanaka2019singular} that over a K\"ahler surface, the Kapustin-Witten equations \eqref{eq_KWequation} are equivalent to the Hitchin-Simpson equations \eqref{Hitchin-Simpson}.

In Example \ref{example_bogomolov}, Bogomolov-Oliveira's example \cite{bogomolov2011symmetric} (denoted $X_{BO}$) is a complex K\"ahler surface that is simply connected but has non-vanishing $\ZT$ harmonic 1-forms. Since $X_{BO}$ is simply connected, there are no topologically trivial solutions to the Hitchin-Simpson equations. Therefore, by Theorem \ref{thm_deformation_ZT_harmonic_form}, we conclude the following:

\begin{corollary}
	Over the 4-manifold $X_{\mathrm{BO}}$, there exists a $\ZT$ harmonic 1-form that is the limit of a sequence of solutions to the Kapustin-Witten equations but not the limit of a sequence of solutions to flat $\mathrm{SL}_2(\mathbb{C})$ connections.
\end{corollary}

	\bibliographystyle{plain}
	\bibliography{references}

\begin{thebibliography}{10}

\bibitem{BandoSiu1994}
Shigetoshi Bando and Yum-Tong Siu.
\newblock Stable sheaves and {E}instein-{H}ermitian metrics.
\newblock {\em Geometry and analysis on complex manifolds}, pages 39--50, 1994.

\bibitem{biswas2021holomorphic}
Indranil Biswas, Sorin Dumitrescu, Lynn Heller, and Sebastian Heller.
\newblock Holomorphic sl (2, c)-systems with fuchsian monodromy (with an
  appendix by takuro mochizuki).
\newblock {\em arXiv preprint arXiv:2104.04818}, 2021.

\bibitem{bogomolov2011symmetric}
Fedor Bogomolov and Bruno~de Oliveira.
\newblock Symmetric differentials of rank 1 and holomorphic maps.
\newblock {\em Pure and Applied Mathematics Quarterly}, 7(4):1085--1104, 2011.

\bibitem{chenngo2020hitchin}
Tsao-Hsien Chen and Bao~Ch\^{a}u Ng\^{o}.
\newblock On the {H}itchin morphism for higher-dimensional varieties.
\newblock {\em Duke Math. J.}, 169(10):1971--2004, 2020.

\bibitem{chen2024vafawitten}
Xuemiao Chen.
\newblock On {V}afa--{W}itten equations over {K}\"ahler manifolds.
\newblock {\em J. Reine Angew. Math.}, 814:135--163, 2024.

\bibitem{Collinspersonal}
Tristan Collins.
\newblock Personal communication.

\bibitem{daskalopoulos2004convergence}
Georgios~D. Daskalopoulos and Richard~A. Wentworth.
\newblock Convergence properties of the {Y}ang-{M}ills flow on {K}\"ahler
  surfaces.
\newblock {\em J. Reine Angew. Math.}, 575:69--99, 2004.

\bibitem{doan2020deformation}
Aleksander Doan and Thomas Walpuski.
\newblock Deformation theory of the blown-up {S}eiberg--{W}itten equation in
  dimension three.
\newblock {\em Selecta Mathematica}, 26:1--48, 2020.

\bibitem{doan2017existence}
Aleksander Doan and Thomas Walpuski.
\newblock On the existence of harmonic {$\rm Z_2$} spinors.
\newblock {\em J. Differential Geom.}, 117(3):395--449, 2021.

\bibitem{donaldson21deformation}
Simon Donaldson.
\newblock Deformations of multivalued harmonic functions.
\newblock {\em Q. J. Math.}, 72(1-2):199--235, 2021.

\bibitem{donaldsonkronheimer1986geometry}
Simon Donaldson and Peter Kronheimer.
\newblock {\em The Geometry of Four-Manifolds}.
\newblock Oxford University Press, 1990.

\bibitem{donaldson1985anti}
Simon~K. Donaldson.
\newblock Anti-self-dual {Y}ang-{M}ills connections over complex algebraic
  surfaces and stable vector bundles.
\newblock {\em Proc. London Math. Soc. (3)}, 50(1):1--26, 1985.

\bibitem{fredrickson2019exponential}
Laura Fredrickson.
\newblock Exponential decay for the asymptotic geometry of the hitchin metric.
\newblock {\em Communications in Mathematical Physics}, pages 1--34.

\bibitem{fredrickson2018generic}
Laura Fredrickson.
\newblock Generic ends of the moduli space of {$SL(n,\mathbb{C})$}-higgs
  bundles.
\newblock {\em arXiv preprint arXiv:1810.01556}, 2018.

\bibitem{haydyswalpuski2015compactness}
Andriy Haydys and Thomas Walpuski.
\newblock A compactness theorem for the seiberg--witten equation with multiple
  spinors in dimension three.
\newblock {\em Geometric and Functional Analysis}, 25(6):1799--1821, 2015.

\bibitem{he23branched}
Siqi He.
\newblock The branched deformations of the special {L}agrangian submanifolds.
\newblock {\em Geom. Funct. Anal.}, 33(5):1266--1321, 2023.

\bibitem{heliu2023spectralvariety}
Siqi He and Jie Liu.
\newblock On the spectral variety for rank two {H}iggs bundles.
\newblock {\em arXiv preprint arXiv:2310.18934}, 2023.

\bibitem{heliumok2023rigidity}
Siqi He, Jie Liu, and Ngaiming Mok.
\newblock The spectral base and quotients of bounded symmetric domains by
  cocompact lattice.
\newblock {\em arXiv preprint arXiv:2401.15852}, 2024.

\bibitem{hecompactification2023}
Siqi He, Rafe Mazzeo, Xuesen Na, and Richard Wentworth.
\newblock {\em to appear in \emph{Moduli}}, 2023.
\newblock arXiv:2304.08198.

\bibitem{heparker2024notes}
Siqi He and Greg Parker.
\newblock {$\mathbb{Z}_2$}-harmonic spinors and 1-forms on connected sums and
  torus sums of 3-manifoldss.
\newblock {\em arXiv preprint arXiv:2407.10922}, 2024.

\bibitem{he2024Z2harmonic}
Siqi He, Richard Wentworth, and Boyu Zhang.
\newblock Z/2 harmonic 1-forms, r-trees, and the morgan-shalen
  compactification.
\newblock {\em arXiv preprint arXiv:2409.04956}, 2024.

\bibitem{hitchin1987self}
Nigel Hitchin.
\newblock The self-duality equations on a {R}iemann surface.
\newblock {\em Proc. London Math. Soc. (3)}, 55(1):59--126, 1987.

\bibitem{hitchin1987stable}
Nigel Hitchin.
\newblock Stable bundles and integrable systems.
\newblock {\em Duke mathematical journal}, 54(1):91--114, 1987.

\bibitem{hitchin1992lie}
Nigel Hitchin.
\newblock Lie groups and {T}eichm\"uller space.
\newblock {\em Topology}, 31(3):449--473, 1992.

\bibitem{KapustinWitten2006}
Anton Kapustin and Edward Witten.
\newblock Electric-magnetic duality and the geometric {L}anglands program.
\newblock {\em Commun. Number Theory Phys.}, 1(1):1--236, 2007.

\bibitem{Katzarkov2013notes}
L~Katzarkov, D~Orlov, and T~Pantev.
\newblock Notes on {H}iggs bundles and {D}-branes.

\bibitem{katzarkov2015harmonic}
Ludmil Katzarkov, Alexander Noll, Pranav Pandit, and Carlos Simpson.
\newblock Harmonic maps to buildings and singular perturbation theory.
\newblock {\em Communications in Mathematical Physics}, 336(2):853--903, 2015.

\bibitem{mazzeo2012limiting}
Rafe Mazzeo, Jan Swoboda, Hartmut Wei{\ss}, and Frederik Witt.
\newblock Limiting configurations for solutions of hitchin’s equation.
\newblock {\em S{\'e}minaire de th{\'e}orie spectrale et g{\'e}om{\'e}trie},
  31:91--116.

\bibitem{mazzeo2019asymptotic}
Rafe Mazzeo, Jan Swoboda, Hartmut Weiss, and Frederik Witt.
\newblock Asymptotic geometry of the hitchin metric.
\newblock {\em Communications in Mathematical Physics}, 367(1):151--191, 2019.

\bibitem{mazzeo2016ends}
Rafe Mazzeo, Jan Swoboda, Hartmut Weiss, Frederik Witt, et~al.
\newblock Ends of the moduli space of higgs bundles.
\newblock {\em Duke Mathematical Journal}, 165(12):2227--2271, 2016.

\bibitem{mochizuki2003asymptoticdifferent}
Takuro Mochizuki.
\newblock Asymptotic behaviour of tame harmonic bundles and an application to
  pure twistor {$D$}-modules. {I}.
\newblock {\em Mem. Amer. Math. Soc.}, 185(869):xii+324, 2007.

\bibitem{mochizuki2008wild}
Takuro Mochizuki.
\newblock Wild harmonic bundles and wild pure twistor {$D$}-modules.
\newblock {\em Ast\'erisque}, (340):x+607, 2011.

\bibitem{Mochizukiasymptotic}
Takuro Mochizuki.
\newblock Asymptotic behaviour of certain families of harmonic bundles on
  {R}iemann surfaces.
\newblock {\em J. Topol.}, 9(4):1021--1073, 2016.

\bibitem{mochizuki2023asymptotic}
Takuro Mochizuki and Szil{\'a}rd Szab{\'o}.
\newblock Asymptotic behaviour of large-scale solutions of hitchin's equations
  in higher rank.
\newblock {\em arXiv preprint arXiv:2303.04913}, 2023.

\bibitem{na2022limiting}
Xuesen Na.
\newblock {\em Limiting configurations for the SU (1, 2) Hitchin equation}.
\newblock PhD thesis, University of Maryland, College Park, 2022.

\bibitem{parker2023concentrating}
Gregory~J Parker.
\newblock Concentrating {D}irac operators and generalized {S}eiberg-{W}itten
  equations.
\newblock {\em arXiv preprint arXiv:2307.00694}, 2023.

\bibitem{parker2023deformations}
Gregory~J Parker.
\newblock Deformations of {$\mathbb{Z}_2$}-harmonic spinors on 3-manifolds.
\newblock {\em arXiv preprint arXiv:2301.06245}, 2023.

\bibitem{parker2024gluing}
Gregory~J Parker.
\newblock Gluing {$\mathbb{Z}_2$}-harmonic spinors and {S}eiberg-{W}itten
  monopoles on 3-manifolds.
\newblock {\em arXiv preprint arXiv:2402.03682}, 2024.

\bibitem{sagman2022unstable}
Nathaniel Sagman and Peter Smillie.
\newblock Unstable minimal surfaces in symmetric spaces of non-compact type.
\newblock {\em arXiv preprint arXiv:2208.04885}, 2022.

\bibitem{segman2024localasymptotics}
Nathaniel Sagman and Peter Smillie.
\newblock Local asymptotics for the hitchin's equations and high energy
  harmonic maps.
\newblock {\em to appear}, 2024.

\bibitem{sakai1979symmetric}
Fumio Sakai.
\newblock Symmetric powers of the cotangent bundle and classification of
  algebraic varieties.
\newblock In {\em Algebraic Geometry}, pages 545--563. Springer, 1979.

\bibitem{sibley2015asymptotics}
Benjamin Sibley.
\newblock Asymptotics of the {Y}ang-{M}ills flow for holomorphic vector bundles
  over {K}\"ahler manifolds: the canonical structure of the limit.
\newblock {\em J. Reine Angew. Math.}, 706:123--191, 2015.

\bibitem{Simpson1988Construction}
Carlos~T. Simpson.
\newblock Constructing variations of {H}odge structure using {Y}ang-{M}ills
  theory and applications to uniformization.
\newblock {\em J. Amer. Math. Soc.}, 1(4):867--918, 1988.

\bibitem{Simpson1992}
Carlos~T. Simpson.
\newblock Higgs bundles and local systems.
\newblock {\em Inst. Hautes \'{E}tudes Sci. Publ. Math.}, (75):5--95, 1992.

\bibitem{simpson1994moduli}
Carlos~T Simpson.
\newblock Moduli of representations of the fundamental group of a smooth
  projective variety i.
\newblock {\em Publications Math{\'e}matiques de l'IH{\'E}S}, 79:47--129, 1994.

\bibitem{takahashi2015moduli}
Ryosuke Takahashi.
\newblock The moduli space of {$S^1$}-type zero loci for {$\Bbb Z/2$}-harmonic
  spinors in dimension 3.
\newblock {\em Comm. Anal. Geom.}, 31(1):119--242, 2023.

\bibitem{tanaka2019singular}
Yuuji Tanaka.
\newblock On the singular sets of solutions to the {K}apustin-{W}itten
  equations and the {V}afa-{W}itten ones on compact {K}\"ahler surfaces.
\newblock {\em Geom. Dedicata}, 199:177--187, 2019.

\bibitem{taubes2024vafawitten}
Clifford Taubes.
\newblock Non-convergent sequences of solutions to the massive {V}afa-{W}itten
  equations with 'interesting' {$\mathbb{Z}/2$} self-dual harmonic 2-form
  limits, 2024.

\bibitem{taubes2013compactness}
Clifford~Henry Taubes.
\newblock Compactness theorems for sl (2; c) generalizations of the
  4-dimensional anti-self dual equations.
\newblock {\em arXiv preprint arXiv:1307.6447}, 2013.

\bibitem{Taubes20133manifoldcompactness}
Clifford~Henry Taubes.
\newblock {${\rm PSL}(2;\mathbb C)$} connections on 3-manifolds with {${\rm
  L}^2$} bounds on curvature.
\newblock {\em Camb. J. Math.}, 1(2):239--397, 2013.

\bibitem{taubes2017behavior}
Clifford~Henry Taubes.
\newblock The behavior of sequences of solutions to the {V}afa-{W}itten
  equations.
\newblock {\em arXiv preprint arXiv:1702.04610}, 2017.

\bibitem{uhlenbeck1982connections}
Karen~K Uhlenbeck.
\newblock Connections with {$L^p$} bounds on curvature.
\newblock {\em Communications in Mathematical Physics}, 83(1):31--42, 1982.

\bibitem{uhlenbeck1986aprior}
Karen~K Uhlenbeck.
\newblock A priori estimate for {Y}ang-{M}ills field.
\newblock {\em preprint}, 1986.

\bibitem{walpuskizhang2019compactness}
Thomas Walpuski and Boyu Zhang.
\newblock On the compactness problem for a family of generalized
  {S}eiberg-{W}itten equations in dimension 3.
\newblock {\em Duke Math. J.}, 170(17):3891--3934, 2021.

\bibitem{Wentworth2016}
Richard~A. Wentworth.
\newblock Higgs bundles and local systems on {R}iemann surfaces.
\newblock In {\em Geometry and quantization of moduli spaces}, Adv. Courses
  Math. CRM Barcelona, pages 165--219. Birkh\"{a}user/Springer, Cham, 2016.

\bibitem{zhang2017rectifiability}
Boyu Zhang.
\newblock Rectifiability and {M}inkowski bounds for the zero loci of {$\Bbb
  Z/2$} harmonic spinors in dimension 4.
\newblock {\em Comm. Anal. Geom.}, 30(7):1633--1681, 2022.

\end{thebibliography}
\end{document}